\newcommand{\Jac}{\mathrm{Jac}}
\newcommand{\SL}{\mathrm{SL}}
\newcommand{\GL}{\mathrm{GL}}
\newcommand{\Sh}{\mathrm{Sh}}
\newcommand{\End}{\mathrm{End}}
\begin{document}
\newtheorem{prop}{Proposition}[section]
\newtheorem{lem}[prop]{Lemma}
\newtheorem{defn}[prop]{Definition}
\newtheorem{cor}[prop]{Corollary}
\newtheorem{thm}[prop]{Theorem} 
\newtheorem{thmx}{Theorem}
\renewcommand{\thethmx}{\Alph{thmx}}
\theoremstyle{remark} 
\newtheorem{example}{Example}[section]
\newtheorem{remark}{Remark}[section]

\title{Modularity of some elliptic curves over totally real fields.}
\author{Bao V. Le Hung}

\maketitle

\begin{abstract}
  We investigate modularity of elliptic curves over a general totally real number field, establishing a finiteness result for the set non-modular $j$-invariants. By analyzing quadratic points on some modular curves, we show that all elliptic curves over certain real quadratic fields are modular.
\end{abstract}
\tableofcontents
\section{Introduction}
The classical Shimura-Taniyama conjecture \cite{diamond_first_2005} is the statement that every elliptic curve $E$ over $\mathbb{Q}$ is associated to a cuspidal Hecke newform $f$ of the group $\Gamma_0(N)\subset\textrm{SL}_2(\mathbb{Z})$. Here the meaning of ''associated'' is that there is an isomorphism between compatible systems of $l$-adic representations of $G_\mathbb{Q}$ 
\begin{align*}
  \rho_{E,l}\simeq \rho_{f,l}
\end{align*}
where the left-hand side is the representation on the $l$-adic Tate module of $E$ and he right-hand side is the $l$-adic representation attached to $f$, or rather the corresponding cuspidal automorphic representation $\pi_f$ of $\GL_2(\mathbb{A}_\mathbb{Q})$ constructed by Eichler-Shimura. In the pioneering work \cite{wiles_modular_1995}, \cite{taylor_ring-theoretic_1995}, Wiles and Taylor-Wiles established the conjecture for all semi-stable $E$, which forms the heart of Wiles'proof of Fermat's Last theorem. After many gradual improvements \cite{diamond_taylor-wiles_1997}, \cite{conrad_modularity_1999}, the full conjecture is finally proven in \cite{breuil_modularity_2001}. It is then natural to try to study the generalization of the conjecture to more general number fields $F$, that is to show that all elliptic curves over $F$ have compatible systems of $l$-adic representations of $G_F$ associated to a cuspidal automorphic representation $\pi$ of $\GL_2(\mathbb{A}_F)$. Unfortunately, in this generality the existence of Galois representations associated to $\pi$ is not known. However, when $F$ is totally real, the required Galois representations have been constructed for some time by Carayol, Wiles, Blasius-Rogawski and Taylor \cite{carayol_sur_1983},\cite{wiles_ordinary_1988}, \cite{blasius_motives_1993} \cite{taylor_galois_1989}, \cite{taylor_galois_1995}, while when $F$ is CM, the Galois representations have only been constructed very recently \cite{HLTT}, \cite{scholze_torsion_2013}. In this paper, we focus our attention on the case $F$ totally real. Previous results in this direction include \cite{jarvis_fermat_2004}, \cite{jarvis_modularity_2008}, \cite{dieulefait_fermat-type_2011}, \cite{allen_modularity_2013}, establishing modularity under local restrictions on the elliptic curves or over particular fields. On the other hand, in the contemporary work \cite{freitas_modularity_2013} the authors establish modularity for elliptic curves over real quadratic fields with full 2-torsion, as well as a finiteness statement regarding possible non-modular elliptic curves with full 2-torsion over a general totally real field. In this work, we establish the following:
\begin{thmx}(see Theorem \ref{Proof of Theorem A})
Let $F$ be a fixed totally real number field. Then, up to isomorphism over $\overline{F}$,there are only finitely many elliptic curves $E$ defined over a totally real extension $F'/F$ of degree at most 2 that are not modular.
\end{thmx}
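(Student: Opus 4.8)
I would combine the known modularity lifting theorems over totally real fields with the classical prime-switching technique to reduce non-modularity to the presence of low-degree points on explicit modular curves, and then apply Faltings' theorem together with gonality estimates. The first step is a reduction to $j$-invariants. Modularity of an elliptic curve over a totally real field depends only on its isomorphism class over $\overline{F}$ --- curves with a common $j$-invariant are twists of one another, and twisting the attached cuspidal automorphic representation by the corresponding character preserves automorphy --- and it is insensitive to base change along the solvable extension $F'/F$ in either direction. Since elliptic curves with complex multiplication over totally real fields are modular (their Galois representations are automorphically induced from Hecke characters of the CM field), we may discard them. It therefore suffices to show that only finitely many $j \in \overline{F}$, necessarily of degree $\le 2[F:\mathbb{Q}]$ over $\mathbb{Q}$, arise as $j(E)$ for a non-CM, non-modular elliptic curve $E$ over a totally real $F'/F$ of degree $\le 2$.

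Next I would recall the switching input. Over a totally real field $K$, an elliptic curve $E$ is modular as soon as, for some prime $p$, the residual representation $\overline{\rho}_{E,p}$ is modular and $\overline{\rho}_{E,p}|_{G_{K(\zeta_p)}}$ is absolutely irreducible --- the local hypotheses at $p$ being arranged by solvable base change and descent. For $p = 3$ this residual modularity is unconditional: $\mathrm{PGL}_2(\mathbb{F}_3)$ is solvable, so Langlands--Tunnell (base-changed to $K$) applies whenever $\overline{\rho}_{E,3}$ is irreducible. For $p = 5$ and $p = 7$ I would run the usual switch: the moduli space of pairs $(E',\phi)$ with $\phi\colon E'[p] \xrightarrow{\sim} E[p]$ a symplectic isomorphism is a twist $X_E(p)$ of $X(p)$ carrying the distinguished point $(E,\mathrm{id})$, and a further point $E'$ on it for which $\overline{\rho}_{E',3}$ has large image is modular, whence $\overline{\rho}_{E,p} \cong \overline{\rho}_{E',p}$ is modular and $E$ is modular. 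For $p = 5$ one has $X(5) \cong \mathbb{P}^1$, so such $E'$ is abundant; for $p = 7$, where $X(7)$ is the genus-$3$ Klein quartic, I would produce $E'$ over a solvable totally real extension using the degree-$3$ pencil through $(E,\mathrm{id})$ and then descend modularity. The only constraint on $E'$ is that it avoid the loci where $\overline{\rho}_{E',3}$ is reducible or dihedral induced from $K(\zeta_3)$; these are pulled back from finitely many genus-zero modular curves $C$, and Riemann--Hurwitz shows the fibre products $X_E(p) \times_{X(1)} C$ have genus $\ge 2$, so by Faltings they carry only finitely many of the offending points and a good $E'$ always exists.

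Assembling this, a non-CM, non-modular $E/K$ must have $\overline{\rho}_{E,p}$ of small image --- projectively contained in a Borel subgroup or in the normalizer of a Cartan subgroup --- for every prime $p$ in a suitable finite set $S$; equivalently, $j(E)$ lies in the image under $X_\Gamma \to X(1)$ of the congruence modular curve $X_\Gamma$, where $\Gamma = \bigcap_{p \in S} \Gamma_p$ and $\Gamma_p$ imposes the relevant Borel- or Cartan-normalizer-level structure at $p$ (the auxiliary primes beyond $\{3,5,7\}$ being handled by the same switching mechanism over controlled solvable totally real extensions, or by the available cases of Serre's conjecture over totally real fields). Since $\Gamma$ may be taken of arbitrarily large index, Abramovich's linear-in-index lower bound ensures that, for $S$ large enough, every component of $X_\Gamma$ has $\mathbb{Q}$-gonality exceeding $4[F:\mathbb{Q}]$; by the theorem on points of bounded degree (Faltings, Frey) such a curve then has only finitely many points of degree $\le 2[F:\mathbb{Q}]$ over $\mathbb{Q}$, which yields the required finiteness of non-modular $j$-invariants.

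The genus, gonality, and Riemann--Hurwitz computations above are routine in principle --- everything in sight is a cover of $X(1)$ ramified only over $j = 0, 1728, \infty$, and Abramovich's bound is off the shelf --- so the real work, and the main obstacle, lies in making the modularity lifting theorems genuinely apply. This means controlling, through solvable base change and careful local analysis, the behaviour of $E$ at the places above $p$ (additive or wildly ramified reduction, and potentially multiplicative reduction at $p$), the ``large image''/adequacy hypotheses at the small primes over base fields that may contain $\sqrt{5}$, and the descent of modularity along the auxiliary extensions used in the higher-prime switches. This bookkeeping --- rather than any single hard input --- is where the difficulty is concentrated.
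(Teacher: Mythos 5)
Your proposal diverges from the paper's in a way that leaves a genuine gap. You propose to enlarge the auxiliary set $S$ of primes so that the index of $\Gamma = \bigcap_{p\in S}\Gamma_p$ becomes large enough for Abramovich's bound to force $\mathbb{Q}$-gonality $> 4[F:\mathbb{Q}]$, and then to invoke Frey's theorem on points of bounded degree. This step cannot be carried out. The prime-switching mechanism that establishes residual modularity relies essentially on special low-genus geometry of the relevant twisted modular curve: $X_E(5)$ has genus $0$, and $X_E(7)$ is a twist of the genus-$3$ Klein quartic whose collinear quadruples give $S_4$-extensions; these arguments do not generalize to $p>7$, where $X(p)$ has much larger genus and no such solvable-point machinery is available. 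Your fallback to ``available cases of Serre's conjecture over totally real fields'' also fails, since Serre's conjecture is only known over $\mathbb{Q}$. Thus $S$ is locked to $\{3,5,7\}$, the index of $\Gamma$ is bounded, and your gonality-versus-degree argument cannot close.

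The paper sidesteps this entirely. It keeps $S=\{3,5,7\}$ and, instead of working with points of degree $\leq 2[F:\mathbb{Q}]$ over $\mathbb{Q}$, it treats the offending elliptic curves as \emph{quadratic} points over the fixed large field $F(\zeta_{105})$, i.e.\ as $F(\zeta_{105})$-rational points of $\mathrm{Sym}^2 X(3*,5*,7*)$. It then applies Corollary 3 of Harris--Silverman: for a curve $C$ of genus $\geq 2$ over a number field $k$, if $C$ is neither hyperelliptic nor bielliptic (a geometric condition over $\mathbb{C}$), then the set of quadratic points of $C$ over $k$ is finite. Because this theorem only concerns degree-$2$ points and is uniform in the base number field, the size of $[F:\mathbb{Q}]$ is irrelevant, and one needs only the fixed, checkable fact that the finitely many curves $X(3*,5*,7*)$ are not hyperelliptic or bielliptic --- which the paper verifies by Abramovich's bound in the easy cases and by Castelnuovo--Severi plus explicit $q$-expansion/canonical-map computations for $X(3b,5b,7b)$ and $X(3\mathrm{ns},5b,7b)$. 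A secondary issue: your claim that the ``bad locus'' in $X_E(7)$ has genus $\geq 2$ and hence Faltings guarantees a good $E'$ does not help for $p=7$, since $X_E(7)$ itself has genus $3$ and so has only finitely many $F$-points; the paper must pass to a solvable extension $F'/F$, found via weak approximation and Ekedahl's effective Hilbert irreducibility, not Faltings.
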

An immediate consequence is that there are only finitely many isomorphism classes (over $\overline
{\mathbb{Q}}$) of elliptic curves $E$ over a(n unspecified) real quadratic field such that $E$ is not modular. Under some restrictions on the field, we can show that in fact no such exceptions exists:
\begin{thmx} \label{Theorem B} (see Theorem \ref{Proof of Theorem B})
  If $F$ is real quadratic such that 5 and 7 are unramified in $F$, then any elliptic curve $E$ defined over $F$ are modular.
\end{thmx}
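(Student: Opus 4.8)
The plan is to run the ``$3,5,7$'' modularity switching argument, made effective over real quadratic fields by replacing the potential-automorphy input used in the proof of Theorem A --- which, relying on Moret--Bailly, is ineffective --- with an explicit study of quadratic points on a short list of modular curves. The hypothesis that $5$ and $7$ be unramified in $F$ will be used precisely because this modular-curve analysis has to be carried out over the real quadratic field $F$ itself (rational points on modular curves over an auxiliary field of larger degree are not under control), so the modularity lifting theorems at $5$ and $7$ must apply over $F$; at the prime $3$ one is instead free to pass to a solvable totally real extension in which $3$ is unramified and then descend.

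First I would record the inputs. For $p\in\{3,5,7\}$ and an elliptic curve $E$ over a totally real field $K$ in which $p$ is unramified, $\rho_{E,p}$ is de Rham of Hodge--Tate weights $\{0,1\}$ and is potentially Barsotti--Tate or ordinary above $p$, so the modularity lifting theorems for Hilbert modular forms (Kisin, Skinner--Wiles, Barnet-Lamb--Gee--Geraghty) yield: if $\bar\rho_{E,p}$ is modular and $\bar\rho_{E,p}|_{G_{K(\zeta_p)}}$ is absolutely irreducible, then $E$ is modular. Residual modularity is automatic whenever $\bar\rho_{E,p}$ has solvable projective image, by Langlands--Tunnell; since $\mathrm{PGL}_2(\mathbb{F}_3)\cong S_4$ is solvable this covers every irreducible $\bar\rho_{E,3}$, while for $p=5,7$ it covers the cases in which the projective image lies in the normalizer of a Cartan subgroup or is an exceptional subgroup ($A_4$ or $S_4$).

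Next, assuming for contradiction that $E/F$ is not modular, I would extract a constraint on $j(E)$ from each prime. At $p=3$: were $\bar\rho_{E,3}|_{G_{F(\zeta_3)}}$ absolutely irreducible, Langlands--Tunnell together with modularity lifting at $3$ (after a solvable base change making $3$ unramified, followed by descent) would make $E$ modular; hence it is not, which --- $F$ being real --- forces the image of $\bar\rho_{E,3}$ into a Borel subgroup or the normalizer of a Cartan, so $j(E)$ lies below an $F$-rational point of $X_0(3)$, $X_{\mathrm{sp}}^+(3)$ or $X_{\mathrm{ns}}^+(3)$. At $p=5$: if $\bar\rho_{E,5}$ has solvable projective image and $\bar\rho_{E,5}|_{G_{F(\zeta_5)}}$ is absolutely irreducible, Langlands--Tunnell and modularity lifting at $5$ again give modularity; if instead the image contains $\SL_2(\mathbb{F}_5)$, I would use the switch: the curve $\mathcal X_E$ classifying pairs $(E',\phi)$ with $\phi$ a symplectic isomorphism $E'[5]\xrightarrow{\sim}E[5]$ is a twisted form of the genus-zero modular curve $X(5)$, and it carries the $F$-point $(E,\mathrm{id})$, so $\mathcal X_E\cong\mathbb{P}^1_F$ and $\mathcal X_E(F)$ is infinite; hence one may choose $E'\in\mathcal X_E(F)$ with $\bar\rho_{E',3}$ surjective, so $E'$ is modular, $\bar\rho_{E,5}\cong\bar\rho_{E',5}$ is modular, and modularity lifting at $5$ over $F$ makes $E$ modular. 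Therefore $\bar\rho_{E,5}|_{G_{F(\zeta_5)}}$ must fail to be absolutely irreducible, so $j(E)$ lies below an $F$-point of $X_0(5)$, $X_{\mathrm{sp}}^+(5)$ or $X_{\mathrm{ns}}^+(5)$; combining, $j(E)$ lies below an $F$-point of a fibre product of a mod-$3$ and a mod-$5$ modular curve. Bringing $p=7$ into the bookkeeping in the same way --- there is no analogue of the switch at $7$, a twisted form of the Klein quartic $X(7)$ having genus $3$, so $7$ only narrows the list further --- I would conclude that a non-modular $E$ over any real quadratic $F$ with $5$ and $7$ unramified gives a point of degree $\le 2$ over $\mathbb{Q}$ on one of finitely many explicit modular curves, among them $X_0(15)$, $X_0(21)$, $X_0(35)$ and mixed-Cartan curves such as $X(\mathrm{s}3,\mathrm{b}5)$, $X(\mathrm{b}3,\mathrm{s}5)$, $X(\mathrm{s}3,\mathrm{s}5)$, $X(\mathrm{s}3,\mathrm{b}7)$, $X(\mathrm{b}3,\mathrm{s}7)$, together with a few triple-level variants.

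The main obstacle will be the last step: for each curve on the list, showing that every point of degree $\le 2$ over $\mathbb{Q}$ is a cusp or corresponds to an elliptic curve already known to be modular --- one with complex multiplication, a $\mathbb{Q}$-curve (hence modular by Serre's conjecture and the theory of $\mathbb{Q}$-curves), a base change from $\mathbb{Q}$, or one with some further residual representation that is visibly modular. For the curves of genus $0$ and $1$ this reduces to computing Mordell--Weil groups over $F$ of the relevant elliptic quotients and their quadratic twists. For those of genus $\ge 2$ --- which, being hyperelliptic or bielliptic, may carry infinitely many quadratic points, so that Faltings' theorem alone does not suffice --- I would study the quadratic points through the hyperelliptic or bielliptic structure: the infinite families turn out to consist of $\mathbb{Q}$-curves (the relevant involution being of Atkin--Lehner type, which forces the associated elliptic curve to be isogenous to its Galois conjugates), and the finitely many remaining quadratic points are determined by a Chabauty-type computation --- on the curve over $F$, on its symmetric square, or through a bielliptic quotient via elliptic-curve Chabauty --- and then inspected individually. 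Assembling these analyses contradicts the non-modularity of $E$, which proves the theorem.
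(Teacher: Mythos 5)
Your outline captures the overall $3$--$5$--$7$ strategy, but there are two substantive gaps relative to the paper's actual proof.

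First, you assert that there is no analogue of the prime switch at $7$ because $X(7)$ has genus $3$. The paper does in fact perform a switch at $7$ (Proposition 3.2(2), following Manoharmayum): starting from the twisted Klein quartic $X_E(7)$, one studies the $S_4$-cover parametrising ordered quadruples of collinear points under the canonical embedding, together with the $\mathrm{GL}_2(\mathbb{F}_3)^4\rtimes S_4$-cover recording $3$-torsion on the four intersection elliptic curves; by Hilbert irreducibility (and a weak-approximation argument to control real places, the mod-$7$ image, and cuspidality), one produces a solvable totally real extension $F'/F$ and $E'/F'$ with $E'[7]\cong E[7]|_{F'}$, $\mathrm{Im}\,\bar\rho_{E',7}=\mathrm{Im}\,\bar\rho_{E,7}$ and $\bar\rho_{E',3}$ large. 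Then $E'$ is modular via $3$, so $\bar\rho_{E,7}|_{G_{F'}}$ is modular of weight $2$, the lifting theorem applies over $F'$, and one descends by cyclic base change. Without this step, your residual-modularity input at $7$ is missing, and the constraint on $j(E)$ degenerates: you only force small image at $3$ and $5$, landing on curves $X(3*,5*)$ of genus $\le 1$, which carry far too many quadratic points for the final analysis to close up.

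Second, you do not use the automatic ordinarity observation (Proposition 6.2), which is the other structural ingredient that makes the final list manageable. The paper shows that an elliptic curve with small image at $3,5,7$ is automatically nearly ordinary above $5$ or above $7$ (the unramifiedness of $5$ and $7$ in $F$ is used here via a Fontaine--Laffaille-type argument on the inertial Weil--Deligne type, not for the big-image modularity lifting theorem, which in the paper's Theorem 2.1 does not require $p$ unramified). This allows the Skinner--Wiles small-image lifting theorem to be applied and eliminates several of the twelve a priori curves $X(3*,5*,7*)$, cutting the list down to exactly four: $X(3b,5s^+)$, $X(3ns^+,7s^+)$, $X(5b,7b)$ and $X(5b,7ns^+)$. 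Your proposed list is organised differently and does not reflect this reduction, so the hard quadratic-point computations would have to be carried out on more and larger curves than the paper handles.

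On the final step, your instinct to separate hyperelliptic/bielliptic families ($\mathbb{Q}$-curve, CM) from a finite sporadic set is right and matches the paper's use of the Atkin--Lehner/$\mathbb{Q}$-curve lemma. However, the paper does not run a Chabauty-type calculation: for each of the four curves it exploits bespoke structure --- rank-$0$ elliptic or abelian-surface quotients, explicit maps obtained from $q$-expansions, resultants, Kummer-surface images, and reduction at auxiliary primes --- to enumerate the relevant divisor classes directly. Your Chabauty suggestion is not wrong in principle, but for the genus-$19$ curve $X(5s^+,7ns^+)$ even the paper cannot carry out the computation, which is precisely why that curve is avoided by the unramifiedness assumptions and the Skinner--Wiles reduction.
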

In fact, with the methods in this paper, it suffices to assume 5 is unramified, see remark \ref{General quadratic fields}. In forthcoming joint work with N.Freitas and S.Siksek, we will show that all elliptic curves over real quadratic fields are modular \cite{FLS}.

The proof of the above theorems follows the framework introduced by Wiles in \cite{wiles_modular_1995}. To prove $E$ is modular, it suffices to show any particular $\rho_{E,l}$ is modular. Modularity is then established in three steps:
\begin{itemize}
  \item Automorphy lifting: If $\rho_{E,l}$ is congruent mod $l$ to an automorphic $l$-adic representation then $\rho_{E,l}$ is also automorphic, under suitable hypotheses.
  \item 
  Establishing residual automorphy (''Serre's conjecture''): $\overline{\rho}_{E,l}$ is is automorphic for some prime $l$, and such that the previous step applies.
  \item 
  Understanding which elliptic curves can not be accessed by the previous two steps, and (ideally) establish automorphy for them by other means.
\end{itemize}
For the first step, the technology for automorphy lifting has improved greatly since \cite{wiles_modular_1995}, in our context the most important improvements are in \cite{kisin_moduli_2009}. This supplies very strong lifting statements by combining existing statements in the literature, under usual largeness assumptions of the residual image. We note however that automorphy lifting for small residual images in the literature remain too restrictive for our needs, see remark \ref{Automatic ordinarity}. What we need about automorphy lifting is summarized in section \ref{Mod lift}.

For the second step, we follow prime switching arguments of Wiles \cite{wiles_modular_1995} and Manoharmayum \cite{manoharmayum_modularity_2001}. The basis of such methods is the possibility to find lots of solvable points on modular curves, and as such is known to apply to very few modular curves. Details of the process is content of section \ref{Residual modularity}.

Having done the first two steps, we are left with understanding elliptic curves that we fail to establish modularity. These curves are naturally interpreted as points on some special modular curves, to which we can either establish qualitative statements, or if lucky, complete determination: after some preparatory material in section \ref{q-expansion}, section \ref{Gonality} gives studies these sets for general totally real fields, section \ref{Modularity over real quadratic fields} reduces the modularity problem (under simplifying assumptions) to studying quadratic points on some modular curves, and section \ref{Quadratic Points} is devoted to studying those quadratic points. This is the content of the remaining sections. We have made extensive use of the computer algebra system Magma \cite{bosma_magma_1997} to perform our computations. 

\subsection*{Acknowledgement}
I would like to thank my advisor Richard Taylor for suggesting this problem, for his constant encouragements and many enlightening discussions. I would also like to thank Frank Calegari for suggesting the possibility of switching primes at 7, as well as pointing me to the reference \cite{manoharmayum_modularity_2001}. I have also benefitted greatly from discussions with George Boxer, Noam Elkies, Bjorn Poonen, Maarten Derickx.   

After most of this work was done, Samir Siksek has informed me about his joint work with Freitas \cite{freitas_modularity_2013}, where they obtain modularity results about elliptic curves with full 2-torsion by similar methods.
\section{Modularity lifting}\label{Mod lift}
We record the following modularity lifting statement which is optimized for our purposes:
\begin{thm} \label{Modularity lifting}
  Let $p>2$ be prime, $F$ a totally real field, $\rho: G_F \to GL_2(\mathbb{Q}_p)$ a Galois representation. Assume 
  \begin{itemize}
    \item 
    $\rho$ is unramified for almost all places $v$ of $F$. 
    \item 
    For each place $v | p$, $\rho_v= \rho | _{G_{F_v}}$ is potentially semi-stable of with $\tau$-Hodge-Tate weight 0, 1 for each embedding $\tau: F_v \hookrightarrow \overline{\mathbb{Q}}_p$. 
    \item 
  $\det\rho \cong \chi_{cyc}$ is the cyclotomic character.
  \item 
  $\overline{\rho}|_{G_{F(\zeta_p)}}$ is absolutely irreducible.
 \item 
 $\overline{\rho}$ is modular of weight 2, that is there exists an automorphic representation of $\GL_2(\mathbb{A}_F)$ with trivial infinitesimal character with associated Galois representation $\rho_\pi$ such that $\overline{\rho}_\pi\cong \overline{\rho}$.
  \end{itemize}
Then $\rho$ is modular
\end{thm}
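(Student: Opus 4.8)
The plan is to deduce the theorem from modularity lifting results already available in the literature --- Kisin's theorem for potentially Barsotti--Tate representations \cite{kisin_moduli_2009}, in its extension to totally real fields (Gee), together with the ordinary (``nearly ordinary'') modularity lifting theorems of Skinner--Wiles and Geraghty --- so that the real content is the reduction to the situations these cover. The basic tool for the reduction is solvable base change for $\GL_2$. Since $\overline{\rho}|_{G_{F(\zeta_p)}}$ is absolutely irreducible, $\rho$ is irreducible, and for any finite solvable totally real extension $F'/F$ for which $\overline{\rho}|_{G_{F'(\zeta_p)}}$ remains absolutely irreducible, cyclic descent for automorphic representations of $\GL_2$ (Langlands) shows that modularity of $\rho|_{G_{F'}}$ implies modularity of $\rho$; the descended automorphic representation stays cuspidal because $\rho|_{G_{F'}}$ is irreducible. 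All the hypotheses --- unramifiedness almost everywhere, the Hodge--Tate condition at $p$, $\det\rho=\chi_{cyc}$, residual irreducibility, residual modularity of weight $2$ --- pass to $F'$, so it is enough to establish the conclusion for $\rho|_{G_{F'}}$ with a conveniently chosen $F'$.

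The relevant local input at $p$ is a dichotomy: a two-dimensional potentially semi-stable representation of $G_{F_v}$ with $\tau$-Hodge--Tate weights $\{0,1\}$ for every $\tau$ becomes, over a finite extension, either crystalline --- hence Barsotti--Tate --- or semi-stable with nonzero monodromy, in which latter case it is reducible and ordinary and is never potentially crystalline; thus each $\rho_v$ for $v\mid p$ is either potentially Barsotti--Tate or potentially ordinary. I would partition the places above $p$ accordingly and choose $F'/F$ solvable and totally real --- keeping $\overline{\rho}|_{G_{F'(\zeta_p)}}$ absolutely irreducible, which is possible because $\overline{\rho}|_{G_{F(\zeta_p)}}$ has large image --- so that $\rho|_{G_{F'}}$ is genuinely ordinary at the first set of places and (potentially) Barsotti--Tate at the rest. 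When $\rho|_{G_{F'}}$ is potentially Barsotti--Tate at every place above $p$, I apply Kisin's potentially Barsotti--Tate modularity lifting theorem in its totally real form (which applies to the potentially Barsotti--Tate local condition directly); when it is ordinary at every place above $p$, I apply the ordinary modularity lifting theorem; and the mixed case is handled by a lifting theorem combining the two families of local deformation conditions. In each case the Taylor--Wiles hypothesis is exactly absolute irreducibility of $\overline{\rho}|_{G_{F'(\zeta_p)}}$, the hypothesis $\det\rho=\chi_{cyc}$ matches the parallel weight $2$ normalization, and unramifiedness outside a finite set makes the ambient global deformation ring of finite type, so that the patching arguments apply.

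The main difficulty is organizational rather than a single deep step. The auxiliary extension $F'$ must be chosen to simultaneously (i) trivialize potential semi-stability, respectively realize ordinarity, at all places above $p$; (ii) preserve absolute irreducibility of $\overline{\rho}|_{G_{F'(\zeta_p)}}$; and (iii) meet the precise technical hypotheses of whichever lifting theorem is invoked --- parity of $[F':\mathbb{Q}]$, conditions at places away from $p$, and the image hypothesis, which for $\GL_2$ comes down to the stated residual irreducibility (with the usual extra care at small primes). These requirements are in mild tension, since the extensions that semi-stabilize $\rho$ at $p$ must be arranged linearly disjoint from the splitting field of $\overline{\rho}$ --- which is exactly where largeness of the residual image is used. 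A secondary point is that the ordinary branch rests on different deep inputs from the potentially Barsotti--Tate branch, so in the mixed case one must ensure the hypotheses of both are simultaneously in force.
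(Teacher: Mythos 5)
Your overall plan — reduce to existing modularity lifting theorems in the literature and invoke them — is the right general shape, but both the route and, more seriously, the content differ from what the theorem actually requires.

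On the route: the paper does not use solvable base change to split into potentially Barsotti--Tate versus ordinary places and then invoke three separate lifting theorems. It cites Kisin's result directly for the potentially crystalline case and the argument in Breuil--Diamond (Theorem 3.2 of \cite{breuil_formes_2012}) for the potentially semi-stable, non-crystalline case; both of these are stated over the original totally real field $F$ and do not require a preliminary base change. Your reduction is not wrong in principle, but it introduces genuine extra difficulties (a ``mixed'' lifting theorem covering ordinary at some $v\mid p$ and Barsotti--Tate at others, the linear disjointness bookkeeping you mention, parity of $[F':\mathbb{Q}]$) that the direct citation avoids entirely. Also note the paper's remark that what Kisin's framework actually needs is a modular lift lying in the same connected component of the local framed deformation ring at each $v\mid p$; this is supplied by Corollary 3.1.6 of \cite{kisin_moduli_2009} plus the availability of an ordinary automorphic lift, and is a more delicate point than ``Kisin's theorem applies to the potentially Barsotti--Tate condition directly.''

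On the content: the real work of the proof — and the only part that is not a citation — is a Taylor--Wiles prime existence argument that you omit. The theorems you cite carry an extra hypothesis when $p=5$: namely, the case where $\mathrm{Gal}(F_0/F)\cong\mathrm{PSL}_2(\mathbb{F}_5)$ with $\zeta_5\in F_0$ (where $F_0=F(\zeta_5)^{\ker\mathrm{ad}^0\overline{\rho}}$) is excluded, because $H^1(\mathrm{PSL}_2(\mathbb{F}_5),\mathrm{Sym}^2\mathbb{F}_5^2)\neq 0$ obstructs the usual argument for finding Taylor--Wiles primes. The statement you are asked to prove has no such extra hypothesis at $p=5$, only $\det\rho\cong\chi_{\mathrm{cyc}}$ and absolute irreducibility of $\overline{\rho}|_{G_{F(\zeta_p)}}$, so you must explain why that case cannot arise. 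The paper does this: under $\det\overline{\rho}=\chi_{\mathrm{cyc}}$, the induced map $\overline{\rho}(G_F)\subset\mathrm{PGL}_2(\mathbb{F}_5)\to\mathbb{F}_5^\times/(\mathbb{F}_5^\times)^2$ forces $\sqrt{5}\in F$, but then $\zeta_5\in F_0\setminus F$ would give $\mathrm{Gal}(F_0/F)\cong A_5$ a quotient of order $2$, a contradiction. Your phrase ``with the usual extra care at small primes'' hand-waves past exactly this step, which is the only non-trivial part of the proof. Without it, your argument does not establish the theorem as stated at $p=5$.
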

\begin{proof}
  This is the combination of various theorems in the literature. The case when $\rho$ is potentially crystalline is the main the result of \cite{kisin_moduli_2009}. To deal with the case $\rho$ is potentially semi-stable but not crystalline, we refer the reader to the proof of Theorem 3.2 in \cite{breuil_formes_2012}. 
 We note that in the reference cited there is an extra hypothesis when $p=5$. The presence of this hypothesis is to assure the existence of Taylor-Wiles systems \cite{kisin_moduli_2009} 3.2.3. However, under our assumption that the determinant of $\overline{\rho}$ is the cyclotomic character, we can still choose Taylor-Wiles systems without this extra hypothesis. With the notation in \cite{kisin_moduli_2009} 3.2.3 and following the proof of Theorem 2.49 in \cite{darmon_fermats_1994}, what we need to show is that we can find for each $n$ and a non-trivial cocycle $\psi\in H^1(G_{F,S},\textrm{ad}^0 \overline{\rho}(1))$, we can find a place $v\notin
  S$ of $F$ such that 
\begin{itemize}
  \item
$|k(v)|=1$ mod $p^n$ and $\overline{\rho}(Frob_v)$ has distinct eigenvalues.  
\item   
The image of $\psi$ under the restriction map 
\begin{align*}
  H^1(G_F,\textrm{ad}^0 \overline{\rho}(1))\to H^1(G_{F_v},\textrm{ad}^0 \overline{\rho}(1))
\end{align*}
is non-trivial.
  \end{itemize}
 Let $F_m$ be the extension of $F(\zeta_{p^m})$ cut out by $\textrm{ad}^0\overline{\rho}$, then the argument in \cite{darmon_fermats_1994} works once we can show that the restriction of $\psi$ to $H^1(G_{F_0},\textrm{ad}^0\overline{\rho}(1))$ is non-trivial. To do this we want to show that $H^1(\textrm{Gal}(F_0/F),\textrm{ad}^0\overline{\rho}(1)^{G_{F_0}})=0$. Because $G_{F_0}$ acts trivially on $\textrm{ad}^0\overline{\rho}$, the coefficient module vanishes unless $\zeta_p\in F_0$. Because $H^1(\textrm{PSL}_2(\mathbb{F}_5),\textrm{Sym}^2\mathbb{F}_5^2)=\mathbb{F}_5$ does not vanish, the extra hypothesis when $p=5$ was needed to exclude the possibility that $\textrm{Gal}(F_0/F)=\textrm{PSL}_2(\mathbb{F}_5)$ and $\zeta_5\in F_0$. But under our cyclotomic determinant assumption we can get rid of it as follows: Suppose the above situation happens, then because $\det:\overline{\rho}(G_F) \subset \textrm{PGL}_2(\mathbb{F}_5)\to \mathbb{F}_5^\times/(\mathbb{F}_5^\times)^2$ is induced by the cyclotomic character, we see that $\sqrt{5}\in F$. But $\zeta_5\in F_0\setminus F$ implies $\textrm{Gal}(F_0/F)=\textrm{PSL}_2(F_5)=A_5$ has a quotient of order $2$, a contradiction.
\end{proof}
\begin{remark}
  The lifting theorem in \cite{kisin_moduli_2009} requires one to have a modular lift which lies in the same connected component (''ordinary'' or ''non-ordinary'') as $\rho$ at all places above $v$. Corollary (3.1.6) in \cite{kisin_moduli_2009} guarantees such a lift once we have an ordinary automorphic lift of $\overline{\rho}$. In general, one can appeal to \cite{barnet-lamb_congruences_2010}, \cite{barnet-lamb_congruences_2012} for the existence of ordinary lifts, however in situations when we apply theorem \ref{Modularity lifting}, we could always guarantee an ordinary automorphic lift (either by Hida theory or by choosing our auxilliary elliptic curves in the argument below judiciously).  
\end{remark}   
\section{Residual modularity and Prime switching} 
\label{Residual modularity}
If $\rho_{E,p}$ is a Galois representation coming from the Tate module of an elliptic curve over $E$, then the first three items of \ref{Modularity lifting} are satisfied, and it remains to consider the last two items. In particular, we need to have access to enough mod $p$ modular Galois representations (of weight 2).
The basic starting point is
\begin{thm}(Langlands-Tunnell \cite{langlands_base_1980}, \cite{tunnell_artins_1981})
If $F$ is a totally real field and $\rho: G_F \to \GL_2(\mathbb{C})$ is an odd Artin representation with solvable projective image then $\rho$ is modular.
  \end{thm}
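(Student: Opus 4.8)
The plan is to reduce the hypothesis ``solvable projective image'' to a short list of cases and attack each one via Langlands' cyclic base change for $\mathrm{GL}_2$. Since $\overline{\rho}(G_F)\subset\mathrm{PGL}_2(\mathbb{C})$ is a finite solvable subgroup, it must be cyclic, dihedral, $A_4$, or $S_4$ (the icosahedral group $A_5$ being excluded precisely because it is not solvable). I would first dispose of the cyclic and dihedral cases: there $\rho$ is \emph{monomial}. If the projective image is cyclic then $\rho$ is a sum of two characters, hence isobaric-automorphic by class field theory; if it is dihedral then $\rho=\mathrm{Ind}_{G_K}^{G_F}\chi$ for a character $\chi$ of $G_K$ with $K/F$ quadratic, and Hecke's theory of theta series attached to a Hecke character (together with the oddness of $\rho$, which forces the archimedean components to be discrete series of weight one) exhibits $\rho$ as modular.

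The substance lies in the tetrahedral and octahedral cases. For $A_4$, use that $A_4$ has the Klein four-group $V$ as a normal subgroup with $A_4/V\cong\mathbb{Z}/3$; let $E/F$ be the cyclic cubic extension cut out by this quotient. Then $\rho|_{G_E}$ has projective image inside $V$, hence is dihedral (or abelian), hence modular over $E$ by the previous paragraph; call the corresponding cuspidal automorphic representation $\pi_E$. Since $\rho|_{G_E}$ is a restriction it is $\mathrm{Gal}(E/F)$-invariant, and so is $\pi_E$ (by strong multiplicity one). Langlands' cyclic base change then produces a cuspidal $\pi$ over $F$ with $\mathrm{BC}_{E/F}(\pi)\cong\pi_E$, and one selects, among the descents (which differ by twists by cubic characters of $\mathrm{Gal}(E/F)$), the one whose unramified Hecke parameters match those of $\rho$; Chebotarev plus the base-change recipe for Satake parameters forces a unique such choice, giving modularity of $\rho$. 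The octahedral case then bootstraps off this one: $A_4\triangleleft S_4$ with quotient $\mathbb{Z}/2$, so restricting $\rho$ to the corresponding quadratic extension $K/F$ lands in the tetrahedral case, $\rho|_{G_K}$ is modular, and one descends along $K/F$ by base change once more, again pinning down the correct descent among the quadratic twists.

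The main obstacle is the descent step, and specifically the matching at \emph{ramified} places. Cyclic base change for $\mathrm{GL}_2$ only asserts the existence of a descent and identifies it up to twists by characters of the Galois group; to conclude that a particular descent is globally isomorphic to $\rho$ one must compare local $L$- and $\varepsilon$-factors at every place, including the exceptional supercuspidal primes above $2$ and $3$ where the tetrahedral and octahedral local types occur. Carrying out this local harmonic analysis---computing explicitly the relevant local base-change lifts for those exceptional representations---is the genuinely hard technical core (this is Tunnell's contribution in the octahedral case, resting on Langlands' local base change). A secondary subtlety, which must be tracked but is not deep, is that base change yields an automorphic representation rather than a priori a classical holomorphic weight-one newform; the oddness hypothesis on $\rho$ and control of the archimedean components ensure the object produced is of the desired type.
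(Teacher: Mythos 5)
The paper does not actually prove this theorem; it is stated as a citation to Langlands and Tunnell, and no proof is offered. So there is no ``paper's proof'' to compare against. Your sketch is, at the level of strategy, a faithful summary of how Langlands--Tunnell actually goes: classify finite solvable subgroups of $\mathrm{PGL}_2(\mathbb{C})$, dispose of cyclic and dihedral projective image via class field theory and theta series, handle $A_4$ by cyclic base change along the cubic extension cut out by $A_4/V$, and bootstrap $S_4$ from $A_4$ via quadratic base change, with the serious technical content living in the local analysis at ramified (especially dyadic and triadic) primes.

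One genuine imprecision worth flagging: in the $A_4$ descent step you write that among the three cubic twists one ``selects the one whose unramified Hecke parameters match those of $\rho$; Chebotarev plus the base-change recipe for Satake parameters forces a unique such choice.'' As stated this is not an argument but a restatement of the desired conclusion: you do not know a priori that \emph{any} of the three descents has Satake parameters agreeing with $\rho$, only that their base changes agree with $\rho|_{G_E}$, and the base-change fibers are a torsor under cubic twists, which is exactly the ambiguity you need to resolve. Langlands' actual resolution of this is to pass to $\mathrm{GL}_3$: he uses the Gelbart--Jacquet symmetric-square lift and compares $\mathrm{Sym}^2\pi$ for each candidate descent against the automorphic representation attached to $\mathrm{Ad}\,\rho$, which is itself an induction from the Klein-four subextension and hence already known to be automorphic. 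This $\mathrm{GL}_3$ comparison, not a Chebotarev argument on $\mathrm{GL}_2$, is what singles out the correct twist. The analogous issue recurs in the $S_4$ descent, and Tunnell's contribution is precisely to control the dyadic local factors needed to make the quadratic descent unambiguous. Your final paragraph does acknowledge that the descent and local matching is the hard part, so the gap is one of precision rather than of being unaware of the difficulty; but as written the descent step would not go through.

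A minor further point: when the projective image is cyclic, $\rho$ is a sum of two characters and the associated automorphic object is an isobaric (Eisenstein) sum, not a cuspidal representation. For the application in the paper one in fact wants irreducible $\overline{\rho}_{E,3}$, which excludes this degenerate case, but the theorem as stated should either allow ``automorphic'' in the isobaric sense or implicitly assume irreducibility.
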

  Using this, the argument in \cite{wiles_modular_1995} shows that if $E$ is an elliptic curve $F$ then $\overline{\rho}_{E,3}$ is congruent to the Galois representation $\rho_\pi$ associated to a Hilbert modular form of weight $1$. Such a representation is ordinary at all places $v| 3$, and hence \cite{wiles_ordinary_1988} shows that $\rho_\pi$ is obtained by the specialization of a Hida family to parallel weight $1$. Specializing the family at parallel weight $2$ then shows that $\overline{\rho}_{E,3}$ is in fact modular of weight $2$. Thus $\overline{\rho}_{E,3}$ is always modular of weight $2$. Starting from this, we can propagate residual modularity:
  \begin{prop}
    
    Let $E$ be an elliptic curve over a totally real field $F$. Then
    \begin{enumerate}
      \item 
      There exists an elliptic curve $E'$ over $F$ such that 
      \begin{itemize}
        \item 
        $\overline{\rho}_{E,5}\cong \overline{\rho}_{E',5}$
        \item
        $\mathrm{Im} \,       \overline{\rho}_{E',3} \supset \mathrm{SL}_2(\mathbb{F}_3)$        
          \end{itemize}
          In particular, $\overline{\rho}_{E,5}\cong \overline{\rho}_{E',5}$ is modular of weight 2.
      \item 
       There exists an elliptic curve $E'$ over a solvable extension $F'$ of $F$ such that 
      \begin{itemize}
        \item 
        $\overline{\rho}_{E,7}| _{G_{F'}}\cong \overline{\rho}_{E',7}$
        \item
        $\mathrm{Im}\,\overline{\rho}_{E',7}=\mathrm{Im}\,\overline{\rho}_{E,7}$
        \item
        $\mathrm{Im}\,\overline{\rho}_{E',3} \supset \mathrm{SL}_2(\mathbb{F}_3)$        
          \end{itemize}
           \end{enumerate}
  \end{prop}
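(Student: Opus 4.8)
The plan is to realize each assertion by moving along suitable modular curves whose relevant level structures parametrize pairs of elliptic curves with a prescribed isomorphism of $p$-torsion, and then using the existence of many rational (resp. solvable) points on those curves together with an irreducibility/Hilbert-irreducibility argument to control the mod-$3$ image of the new curve $E'$. For part (1): fix $E/F$ and consider the twist of the modular curve $X(5)$ that parametrizes elliptic curves $E'$ equipped with a symplectic isomorphism $E'[5] \cong E[5]$ (as $G_F$-modules, with the Weil pairing). This twisted curve $X_E(5)$ is a genus-zero curve over $F$; crucially, it has an $F$-rational point (coming from $E$ itself), hence $X_E(5) \cong \mathbb{P}^1_F$ and has infinitely many $F$-points. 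The mod-$3$ representation varies as we move along this $\mathbb{P}^1$, and a Hilbert-irreducibility argument shows that for a "general" such point the corresponding $E'$ has $\mathrm{Im}\,\overline{\rho}_{E',3} \supset \mathrm{SL}_2(\mathbb{F}_3)$ — concretely, the locus of $E'$ with small mod-$3$ image is a thin set, cut out by the $F$-points of a finite list of modular curves ($X_0(9)$, $X_{\mathrm{sp}}(3)$, $X_{\mathrm{ns}}(3)$, and the curve classifying $E'$ with reducible $\overline{\rho}_{E',3}$), none of which dominates $X_E(5)$, so we may avoid all of them. Having produced $E'$ with $\overline{\rho}_{E',5} \cong \overline{\rho}_{E,5}$ and large mod-$3$ image, the mod-$3$ modularity of $E'$ (weight $2$, available for \emph{every} elliptic curve over $F$ by the Langlands–Tunnell plus Hida-family argument recalled above) combined with Theorem \ref{Modularity lifting} applied to $\overline{\rho}_{E',3}$ — whose restriction to $G_{F(\zeta_3)}$ is absolutely irreducible precisely because the image contains $\mathrm{SL}_2(\mathbb{F}_3)$ — shows $\rho_{E',3}$, hence $E'$, is modular; then $\overline{\rho}_{E',5} \cong \overline{\rho}_{E,5}$ is modular of weight $2$.

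For part (2) the obstruction is that the analogous twist $X_E(7)$ of $X(7)$ has genus $3$, so there is no reason for it to have enough $F$-points; this is where passing to a solvable extension $F'$ enters. The idea, following Manoharmayum, is that although $X_E(7)$ need not have many $F$-rational points, one can find a solvable extension $F'/F$ over which it acquires an $F'$-point $E'$ with the \emph{same} image for $\overline{\rho}_{E',7}$ as for $\overline{\rho}_{E,7}$ (so no information is lost), and — by again invoking Hilbert irreducibility, now over $F'$, to dodge the thin set of $E'$ with small mod-$3$ image — with $\mathrm{Im}\,\overline{\rho}_{E',3} \supset \mathrm{SL}_2(\mathbb{F}_3)$. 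Concretely, I would fix a point of $X_E(7)$ defined over a small extension and spread it out, or use that $X_E(7)$ maps to $\mathbb{P}^1$ (via some map of small degree, or via a suitable elliptic quotient) so that its points are controlled by solvable-by-finite extensions; one then needs to arrange the solvable extension so that the mod-$7$ image does not grow or shrink, which is possible because the image is determined by the (projective) mod-$7$ representation and one can take $F'$ linearly disjoint from the fixed field of the relevant kernels, or more precisely one chooses $F'$ so that $F' \cap F^{\ker \overline{\rho}_{E,7}} = F$. The constraint $\mathrm{Im}\,\overline{\rho}_{E',7} = \mathrm{Im}\,\overline{\rho}_{E,7}$ together with $\overline{\rho}_{E,7}|_{G_{F'}} \cong \overline{\rho}_{E',7}$ then pins things down.

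The main obstacle, in both parts but especially in part (2), is controlling the mod-$3$ image: one must ensure that the conditions "$\mathrm{Im}\,\overline{\rho}_{E',3} \supset \mathrm{SL}_2(\mathbb{F}_3)$" and (in part (2)) "$\mathrm{Im}\,\overline{\rho}_{E',7} = \mathrm{Im}\,\overline{\rho}_{E,7}$" can be simultaneously satisfied by a single point on the relevant twisted modular curve over the relevant field. For part (1) this is a clean application of Hilbert irreducibility on $\mathbb{P}^1_F$: the exceptional set is a finite union of thin sets coming from modular curves of level $9$, $3^2$-type, and split/nonsplit Cartan, and thinness makes its complement nonempty (indeed infinite). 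For part (2) the extra work is in simultaneously keeping the mod-$7$ image intact while passing to the solvable extension needed to get an $F'$-point at all — one must choose the solvable extension $F'$ carefully (for instance, take it inside a field cutting out mod-$7$ data but disjoint from the mod-$3$ and mod-$7$ image fields in the appropriate sense), and then check that the thin exceptional set for the mod-$3$ condition remains thin over $F'$. I expect the genus-$3$ geometry of $X_E(7)$ and the bookkeeping of which solvable extension to take to be the technical heart of the argument; the rest is Hilbert irreducibility plus the modularity lifting theorem recalled above.
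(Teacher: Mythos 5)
Your treatment of part (1) matches the paper's: $X_E(5)\cong\mathbb{P}^1_F$ because it has a rational point, and a Hilbert-irreducibility argument on the cover parametrizing $3$-torsion bases (or, equivalently, avoidance of the thin sets coming from level-$3$ modular curves) produces an $E'$ with large mod-$3$ image. The paper phrases the lower bound on the image via the degree of the cover rather than listing the small-image modular curves, but the substance is the same.

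For part (2), however, your proposal is missing the central geometric idea, and the alternatives you float would not obviously work. You suggest either using a low-degree map $X_E(7)\to\mathbb{P}^1$ or an elliptic quotient, then arguing by Hilbert irreducibility over $F'$. Neither is adequate: a degree-$3$ map (projection from an $F$-point of the plane quartic) would at best produce $S_3$-extensions, which is not enough group to force the mod-$3$ image to contain $\SL_2(\mathbb{F}_3)$ when you add up contributions from the conjugate points; and an elliptic quotient would not even give a supply of rational points without a rank hypothesis, nor control over solvability of the extensions generated. The paper instead follows Manoharmayum and works with the \emph{dual} projective plane $L$ of lines through the canonical plane-quartic model $C=X_E(7)\hookrightarrow\mathbb{P}^2$. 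A line $l$ meets $C$ in four points, giving a degree-$24$ cover $Z\to L$ (ordered quadruples of collinear points) which the paper proves, by a monodromy computation (Lemma \ref{Collinear 1}), is geometrically irreducible with Galois group $S_4$. The residue field of a generic line is then an $S_4$-extension $F'$ carrying four Galois-conjugate elliptic curves; the crucial counting step (Lemma \ref{Collinear 2}) bounds below the degree over $L$ of any irreducible component of $Z\times_{C^4}T^4$ (where $T$ is the mod-$3$ full-level torsor), and it is this degree bound, spread over the four conjugate curves, that forces each individual mod-$3$ image to have order $\ge 24$ and hence contain $\SL_2(\mathbb{F}_3)$. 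None of this appears in your sketch.

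A second, subtler gap: you invoke Hilbert irreducibility "over $F'$", but $F'$ is not given in advance --- it is produced by the choice of line. What the paper actually uses is \emph{weak approximation} on $L\cong\mathbb{P}^2$ plus Ekedahl's effective Chebotarev/Hilbert result, because one must impose simultaneous local conditions on the line $l$: at the archimedean places one picks a line cutting the real Klein quartic in four real non-cusp points (to force $F'$ totally real), and at finitely many auxiliary finite places chosen by Chebotarev one imposes, via Lemma \ref{4 rational intersection points}, that $l$ cut $C$ in four $F_v$-rational points (which, together with a split-completely condition, keeps $F'$ linearly disjoint from $F^{\ker\overline{\rho}_{E,7}}$ and so preserves the mod-$7$ image). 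Your remark "$F'\cap F^{\ker\overline{\rho}_{E,7}}=F$" is the right target, but a bare Hilbert-irreducibility statement does not deliver it; the weak-approximation formulation is what makes the local conditions compatible. So the heart of the proof --- the $S_4$ cover of the dual plane and the weak-approximation-with-local-conditions argument --- is not present in your proposal, and the alternative routes you suggest would need a genuinely new idea to close the gap.
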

  
  \begin{proof} 
   Given an elliptic curve $E$ over $F$, we have a finite (\'{e}tale) group scheme $E[p]$ over $F$, and thus we have a twisted modular curve $X_{E}(p)$ defined over $F$ which classifies isomorphism classes of (generalized) elliptic curves $E'$ together with a symplectic isomorphism of group schemes
  	   \[E[p]\cong E'[p] \] 
     Indeed one has such a twisted modular curve over $F$ for any Galois representation $\overline{\rho}:G_F \to \GL_2(\mathbb{F}_p)$ with cyclotomic determinant, by replacing $E[p]$ with the group scheme $\mathcal{G}$ with descent data given by $\rho$. 
   \begin{enumerate}
    \item (see \cite{wiles_modular_1995})
    The curve $X_E(5)$ has genus $0$ and has a rational point (corresponding to $E$), hence is isomorphic to $\mathbb{P}^1$ over $F$. The variety parameterizing of bases $(P,Q)$ of the 3-torsion subscheme of the universal elliptic curve over $X_E(5)$ has $2$ geometric connected components, which are covers of $X_E(5)$ of degree $|\mathrm{SL}_2(\mathbb{F}_3)|$. Hence Hilbert's irreducibility theorem shows that one can find an $F$-rational point on $X_E(5)$, corresponding to an elliptic curve $E'$ such that the field cut out by $E'[3]$ has degree $\geq 24$ over $F$, giving the desired curve because $\mathrm{SL}_2(\mathbb{F}_3)$ is the unique proper subgroup of $\mathrm{GL}_2(\mathbb{F}_3)$ of size $\geq 24$.
Theorem \ref{Modularity lifting} then shows that $E'$ is modular, hence the last assertion.
    \item 
    This is a more elaborate version of the above argument. We will follow the general approach in \cite{manoharmayum_modularity_2001}, giving some further details. The twisted modular curve for the mod 7 Galois representation $\chi_{cyc}\oplus 1$ is isomorphic (over $F$) to the Klein quartic
    \[ X^3Y+Y^3Z+Z^3X=0\]
    in $\mathbb{P}^2$.
  We denote by $C$ the twisted modular curve $X_E(7)$. $C$ is a smooth non-hyperelliptic curve of genus $3$, hence the canonical embedding realizes $C$ as a plane quartic $C\hookrightarrow \mathbb{P}^2$. Let $L$ denote the space of lines inside this $\mathbb{P}^2$, then $L$ is the dual projective space and is isomorphic to $\mathbb{P}^2$. Let $Z \hookrightarrow C^4$ be the subvariety consisting of ordered quadruple of points on $C$ that are collinear. Note that $Z=L\times_{\mathrm{Sym}^4 C} C^4 $, where $\mathrm{Sym}^4 C=C^4/S_4$ is the fourth symmetric power of $C$. By lemma \ref{Collinear 1} $Z$ is geometrically irreducible and the cover $Z\to L$ is generically Galois with Galois group $S_4$.
  
  Let $T$ denote the variety parameterizing bases $(P,Q)$ for the 3-torsion subscheme of the universal elliptic curve over $C$, it is a $\GL_2(\mathbb{F}_3)$-torsor over the complement of the cusps on $C$. Define $Y= Z \times_{C^4} T^4$. By lemma \ref{Collinear 2}, each geometric irreducible component of $Y$ has degree $\geq 24^5/3$ over $L$. Because $L$ satisfies weak approximation, the main result of \cite{ekedahl_effective_1990} shows that the subset of $F$-rational lines $l$ such that the fibers in $Y$ over $l$ each have degree $\geq 24^5/3$ and the fiber in $Z$ over $l$ is connected also satisfies weak approximation. If a line $l$ has the above properties, $l$ intersects $C$ at four points whose residue field is an $S_4$ Galois extension of $F'$ of $F$. The 4 intersection points give rise to 4 elliptic curves $E_i$ over $F'$, which are conjugates of each other, hence the degree $d$ of the extension $F'(E_i[3])$ obtained by adjoining the 3-torsion points of $E_i$ is independent of $i$. On the other hand, the extension over $F'$ generated by adjoining all the 3-torsion points of all the $E_i$ has degree $\geq 24^4/3$ over $F'$, thus we have $d^4\geq 24^4/3$. But any subgroup of $\GL_2(\mathbb{F}_3)$ of such size $d$ must contain $\mathrm{SL}_2(\mathbb{F}_3)$. Thus to finish the proof, we only need to arrange that the intersection points are defined over a totally real extension of $F$, and that the mod $7$ Galois representation of the elliptic curves corresponding to the intersection points have as large image as $\mathrm{Im}\, \overline{\rho}_{E,7}$. This will be done by using the weak approximation property to find a line $l$ as above which lies in open subsets $U_v$ for $v$ running over a finite set of places of $F$ chosen as follows:
\begin{itemize}
  \item
For each $v|\infty$, because $\overline{\rho}_{E,7}$ has cyclotomic determinant, \\$X_E(7)\times_F F_v$ is actually isomorphic to the Klein quartic, and thus we find an explicit line $l_v$ which intersects $C$ at 4 non-cuspidal real points, for example the line $3X+Y=0$. Every line in a small open neighborhood (for the strong topology) of $l_v$ will then have the same property. Shrinking $U_v$, we can assume it contains no line passing through a cusp.
\item   
By the Chebotarev density theorem, for each element $g\in \mathrm{Im}\,\overline{\rho}_{E,7}$, there are infinitely many places $v$ such that $\overline{\rho}|_{G_{F_v}}$ is unramified and the image of $Frob_v$ is $g$. Pick such a $v$ for each $g$, such that $C$ has good reduction and reduces to a smooth plane quartic $\overline{C}$. If a Galois extension $F'$ of $F$ is such that all places $v$ in this list splits completely, then $F'$ is linearly disjoint from $F^{\mathrm{ker} \overline{\rho}}$. By lemma \ref{4 rational intersection points}, if we pick $v$ with $\mathrm{N}v>300$, the reduction $\overline{C}$ will contain 4 collinear rational points. By Hensel's lemma, any line that reduces to the line going through these 4 points will intersect $C\times_F F_v$ at 4 $F_v$- rational points. This gives an open subset $U_v$ of $L(F_v)$ all whose members have intersect $C$ at $F_v$-rational points and does not contain a line passing through a cusp.
\end{itemize}  
It is now clear that if an $F$-rational line $l$ is in $U_v$ for the above choice, the 4 intersection points of $l$ with $C$ will have coordinates in a totally real extension $F'$ of $F$, and the image of the mod $7$ representation corresponding to each intersection point is the same as that of $E$.   
    \end{enumerate}
  \end{proof}
  \begin{lem}
      \label{Collinear 1}
      Let $C$ be the Klein quartic over $\mathbb{C}$. Let $L=\mathbb{P}H^0(C,\Omega^1)\hookrightarrow \mathrm{Sym}^4 C$ and $Y=L\times_{\mathrm{Sym}^4 C}C^4$ be the space of ordered quadruple of collinear points on $C$. Then $Y$ irreducible and $Y\to L$ is generically Galois with Galois group $S_4$.  
    \end{lem}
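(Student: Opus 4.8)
The plan is to exhibit the monodromy (Galois) group of the generically finite cover $Y \to L$ as a transitive subgroup $G \subseteq S_4$ and then rule out all proper transitive subgroups, which simultaneously yields $G = S_4$ and the geometric irreducibility of $Y$ (a connected cover with Galois monodromy group acting transitively on a fibre is irreducible). First I would fix a general line $l \in L$; since $C$ is a smooth plane quartic, Bézout gives that $l$ meets $C$ in exactly four points, and for $l$ outside a proper closed subset these four points are distinct, so the cover $Y \to L$ has degree $4! = 24$ counting the orderings, i.e. the unordered version $Z_{\mathrm{unord}} \to L$ has degree $4$ and $Y \to L$ is its $S_4$-Galois closure provided the degree-4 cover has full symmetric monodromy. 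So the real content is: the monodromy group of "the four intersection points of a variable line with the Klein quartic" is all of $S_4$.

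To pin down the monodromy group I would use the standard dictionary between monodromy and the cycle types produced by the branch locus. The branch divisor in $L$ of the degree-$4$ cover $Z_{\mathrm{unord}} \to L$ is the dual curve $C^\vee$, whose generic point parametrizes simple tangent lines — a line tangent to $C$ at one point meets $C$ in a point of multiplicity $2$ and two further simple points, producing a transposition in the monodromy. Hence the monodromy group is generated by transpositions, and a subgroup of $S_4$ generated by transpositions and acting transitively on $\{1,2,3,4\}$ must be $S_4$ itself. (Transitivity is clear: $C$ is irreducible, so the incidence variety of pairs (point of $C$, line through it) is irreducible, which forces the monodromy to act transitively on the four sheets.) The one point needing care is that at least one of these tangency branch points is a \emph{simple} tangency, i.e. that $C^\vee$ is not entirely made of higher tangencies (bitangents, inflectional tangents); but a smooth plane quartic has only finitely many bitangents and flexes, so the generic tangent line is a simple tangent, and a small loop around the corresponding smooth point of $C^\vee$ gives a genuine transposition.

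I expect the main obstacle — really the only subtlety — to be making the transposition claim rigorous without hand-waving: one must check that near a simple tangent line $l_0$, in suitable local analytic coordinates the four-valued "intersection point" function has exactly two branches coming together in a square-root singularity and two branches that stay separate and single-valued. This is a local computation: parametrize a small disk of lines through a simple tangency, and the equation cutting out the intersection points on such a line is, after an analytic change of coordinates, $t^2 = \varepsilon$ (the tangency direction) times a unit, plus two factors that are units — Puiseux/Weierstrass preparation makes this precise. Once that local model is in hand, the loop $\varepsilon \mapsto \varepsilon e^{2\pi i \theta}$ swaps exactly those two branches, giving the transposition; combined with transitivity and the "generated by transpositions" observation, we conclude $G = S_4$ and thus $Y$ is irreducible with $Y \to L$ generically Galois of group $S_4$. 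Everything here is insensitive to replacing the Klein quartic by any smooth plane quartic, which is why the statement is phrased for $C$ the Klein quartic over $\mathbb{C}$ but the proof is purely about smooth plane quartics; I would remark on this so the companion lemmas \ref{Collinear 2} can reuse it.
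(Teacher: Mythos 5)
Your proof is correct, but it takes a genuinely different route from the paper's. You run the classical Lefschetz-style argument for the monodromy of linear sections: the branch divisor of the degree-$4$ incidence cover $\{(p,l): p\in C\cap l\}\to L$ is the dual curve $C^\vee$, which is irreducible, so the monodromy group is generated by the (conjugate) meridian loops around it; a meridian at a generic point of $C^\vee$ (a simple tangent) is a transposition; transitivity follows from irreducibility of the incidence variety; and a transitive subgroup of $S_4$ generated by transpositions is $S_4$. The paper instead fixes two flexes $P_0,P_1$ on $C$ with $l(P_0P_1)$ meeting $C$ in four distinct points, considers the pencil $L_{P_0}\cong\mathbb{P}^1$ of lines through $P_0$, and argues that the residual triple cover $C\to L_{P_0}$ has both simple and triple ramification (using the tangent at the flex $P_0$ and a flex tangent line through $P_0$), hence full $S_3$ monodromy; repeating at $P_1$ shows the monodromy group of $Y\to L$ contains the point stabilizers of two distinct letters, which forces it to be all of $S_4$. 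Your approach buys generality and uniformity: it works verbatim for any smooth plane curve of degree $\geq 2$, needs no special incidence facts about the Klein quartic's flexes, and isolates the one genuinely local computation (the square-root branching at a simple tangent). The paper's approach buys a constructive picture of explicit paths in $Y$, which is what the proof actually uses downstream when building the base point $y_0$ and connecting it to the rest of the fiber; but the two strategies are logically interchangeable for proving the lemma as stated.
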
  
    \begin{proof}
    Let $U\subset L$ denote the locus over which $Y\to L$ is \'{e}tale.
     We pick two flexes $P_0$, $P_1$ on $C$ such that the line $l_0=l(P_0P_1)$ intersects $C$ at four distinct points $P_0$, $P_1$, $Q$, $R$. Pick the base point $y_0=(P_0,P_1,Q,R)\in 
     Y$ which lifts $l_0$. We have the diagram
     \begin{align*}
      \xymatrix{ Y_{P_0}\ar[r]\ar[d]^{pr_2} &Y \ar[d]\\
     C \ar[d] & L \\
     L_{P_0} \ar[ur] &
     }
    \end{align*}
     where $L_{P_0}$ is the $\mathbb{P}^1$ of lines in $L$ passing through $P^1$, $Y_{P_0}$ is the subvariety of $Y$ consisting of quadruples whose first coordinate is $P_0$, and $pr_2$ denotes the projection to the second coordinate. Because there is a line through $P_1$ which is tangent to $C$ and has contact order $3$ at a point different from $P_0$, and the tangent line to $C$ from $P_0$ meets $C$ at exactly another point other than $P_0$, we see that the cover $C\to L_{P_0}$ in the diagram is a triple cover which has both simple and triple ramification points. Consequently, the monodromy group of the cover is all of $S_3$. Thus there are loops in $\pi_1(U,l_0)$ whose lift to $Y$ provides a path that connects $y_0$ to any other point in the fiber over $l_0$ whose first coordinate is $P_0$. But applying the same argument for $P_1$, we can also construct paths joining $y_0$ and points in the fiber over $l_0$  whose second coordinate is $P_1$. It follows that the monodromy group of the cover $Y_U\to U$ must be a subgroup of $S_4$ which contains the stabilizer of two different points, and hence must be all of $S_4$, showing that $Y$ is irreducible and the generic Galois group is $S_4$.
    \end{proof} 
    
    \begin{lem}
      \label{Collinear 2}
      Let $C=X(7)$ be the modular curve with full level 7 structure over $\mathbb{C}$. Fix a non-trivial $\zeta\in\mu_3(\mathbb{C})$, and let $T_\zeta$ denote the space parameterizing bases $(P,Q)$ of the 3-torsion subgroup of the universal elliptic curve over $C$, such that the Weil pairing $e_3(P,Q)=\zeta$. Let $Z\hookrightarrow C^4$ denote space of ordered quadruples which are collinear (under the canonical embedding of $C$). Put $Y=Z\times_{C^4} T_\zeta^4$. Then each irreducible component of $Y$ has degree $\geq 24^4/3$ over $Z$.  
      \end{lem}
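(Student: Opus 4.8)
The plan is to realise $Y\to Z$ as a torsor and bound its monodromy group. Let $Z^{\circ}\subseteq Z$ be the dense open locus of quadruples consisting of four \emph{distinct, non-cuspidal} points at which the line meets $C$ transversally (and at which the various auxiliary maps below are well behaved). The cover $T_\zeta\to C$ is finite \'etale of degree $|\SL_2(\mathbb{F}_3)|=24$ away from the cusps (it is the $\SL_2(\mathbb{F}_3)$-torsor of level-$3$ bases with Weil pairing $\zeta$; over $\mathbb{C}$ it is nothing but $X(21)\to X(7)$), so $Y|_{Z^{\circ}}\to Z^{\circ}$ is an $\SL_2(\mathbb{F}_3)^4$-torsor. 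Let $\Gamma:=\mathrm{im}\!\big(\pi_1(Z^{\circ})\to\SL_2(\mathbb{F}_3)^4\big)$ be its monodromy image. Then the connected components of $Y$ correspond to the cosets of $\Gamma$, each of degree $|\Gamma|$ over $Z$, so the lemma is \emph{equivalent} to the statement $[\SL_2(\mathbb{F}_3)^4:\Gamma]\le 3$, i.e. $|\Gamma|\ge 24^4/3$. Everything reduces to showing the four $3$-torsion monodromies attached to the four collinear points are almost as independent as possible.

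\textbf{Geometric input: one- and two-point projections.} Since three general points of $C$ lie on no line, the projection $Z\to C^3$ is \emph{not} dominant, so only one- and two-point projections are available. For each $i$, the projection $\mathrm{pr}_i\colon Z\to C$ is dominant with geometrically irreducible generic fibre: the fibre over a general $P$ is the space of \emph{ordered} triples of residual intersection points of lines through $P$, which is irreducible because the degree-$3$ cover $C\to\mathbb{P}^1$ (projection from $P$) has full monodromy $S_3$, exactly as in the proof of Lemma~\ref{Collinear 1}. Hence $\pi_1(Z^{\circ})\twoheadrightarrow\pi_1(C^{\circ})$, and composing with the surjection $\pi_1(C^{\circ})\twoheadrightarrow\SL_2(\mathbb{F}_3)$ (full $3$-torsion monodromy of the universal curve over $X(7)$, since $\Gamma(7)\twoheadrightarrow\SL_2(\mathbb{Z}/3)$) gives $\mathrm{pr}_i(\Gamma)=\SL_2(\mathbb{F}_3)$. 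For each pair $i\ne j$, the projection $\mathrm{pr}_{ij}\colon Z\to C^2$ is dominant of degree $2$ (over a general $(P_i,P_j)$ the line through them is determined, and the remaining two points are the two orderings of the residual pair). Therefore $\pi_1(Z^{\circ})$ maps onto a subgroup of index $\le 2$ of $\pi_1$ of the base, whose image in $\SL_2(\mathbb{F}_3)^2$ is everything; so $\mathrm{pr}_{ij}(\Gamma)$ has index $\le 2$ in $\SL_2(\mathbb{F}_3)^2$, hence equals it because $\SL_2(\mathbb{F}_3)^2$ has no subgroup of index $2$ (its abelianisation being $(\mathbb{Z}/3)^2$).

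\textbf{Group theory: pairwise surjectivity forces $Q_8^4$.} I would next show, purely group-theoretically, that any $\Gamma\le\SL_2(\mathbb{F}_3)^4$ with $\mathrm{pr}_{ij}(\Gamma)=\SL_2(\mathbb{F}_3)^2$ for all pairs contains $Q_8^4$, where $Q_8=[\SL_2(\mathbb{F}_3),\SL_2(\mathbb{F}_3)]$. For a pair $\{i,j\}$ put $M_{ij}:=\Gamma\cap(\SL_2(\mathbb{F}_3)^2_{(ij)})$; one checks $M_{ij}$ is normal in $\SL_2(\mathbb{F}_3)^2_{(ij)}$, and using the pairwise surjectivity onto the other pairs one gets that each coordinate projection of $M_{ij}$ contains $Q_8$; since $[\SL_2(\mathbb{F}_3),Q_8]=Q_8$, normality then forces $M_{ij}\supseteq Q_8\times Q_8$. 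The six subgroups $M_{ij}$ generate $Q_8^4$, so $\Gamma\supseteq Q_8^4$. Consequently the problem collapses to bounding $\overline{\Gamma}:=\Gamma/Q_8^4\subseteq\SL_2(\mathbb{F}_3)^4/Q_8^4=(\mathbb{Z}/3)^4$: we must show $[(\mathbb{Z}/3)^4:\overline{\Gamma}]\le 3$. (Pairwise surjectivity alone only gives $\ge 9$, via the $[4,2,3]_3$ MDS code; this is \emph{not} enough, so we need more.)

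\textbf{Using the $S_4$-symmetry.} Since $Z\to L$ is a connected $S_4$-cover (Lemma~\ref{Collinear 1}), the monodromy of $Y\to L$ lands in $\SL_2(\mathbb{F}_3)^4\rtimes S_4$ and surjects onto $S_4$ with kernel $\Gamma$; hence $\Gamma$ is normalised by a group surjecting onto $S_4$, which acts on the abelianisation $(\mathbb{Z}/3)^4$ through the permutation action. Thus $\overline{\Gamma}$ is an $\mathbb{F}_3[S_4]$-submodule of the permutation module $\mathbb{F}_3^4$; since $3\nmid 4$ this module is $\langle(1,1,1,1)\rangle\oplus S_0$ with $S_0=\{\text{sum-zero vectors}\}$ a $3$-dimensional simple module, so the only submodules are $0$, $\langle(1,1,1,1)\rangle$, $S_0$ and $\mathbb{F}_3^4$. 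The first two are excluded because $\overline{\Gamma}$ surjects onto each coordinate-pair $(\mathbb{Z}/3)^2$ (from the previous paragraph). Hence $\overline{\Gamma}\supseteq S_0$, so $[(\mathbb{Z}/3)^4:\overline{\Gamma}]\le 3$ and $|\Gamma|\ge 8^4\cdot 27=24^4/3$, as required.

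I expect the main obstacle to be the middle two steps: converting the rather thin geometric data — only one- and two-point projections, because $Z$ is a surface sitting diagonally in $C^4$ — into the \emph{sharp} bound. This needs both the group-theoretic fact that pairwise surjectivity already produces $Q_8^4$, and the $S_4$-equivariance handed down from Lemma~\ref{Collinear 1} to pin the remaining $(\mathbb{Z}/3)^4$-quotient; neither ingredient alone suffices.
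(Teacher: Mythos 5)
Your proof is correct and follows essentially the same route as the paper: (i) pairwise surjectivity $\mathrm{pr}_{ij}(\Gamma)=\SL_2(\mathbb{F}_3)^2$ from the degree-$2$ cover $Z\to C^2$ plus the absence of index-$2$ subgroups of $\SL_2(\mathbb{F}_3)^2$, (ii) bootstrapping this to $\Gamma\supseteq Q_8^4$ (the paper does this by explicit commutators in $3$- and $4$-coordinate projections rather than via your $M_{ij}$/normality formulation, but the content is identical), and (iii) using the $S_4$-action inherited from Lemma \ref{Collinear 1} to force the image in $(\mathbb{F}_3)^4$ to contain the sum-zero hyperplane. The only material you add beyond the paper is the unneeded check on single-coordinate projections; your $\mathbb{F}_3[S_4]$-module language in step (iii) is a cleaner phrasing of the paper's terse final sentence.
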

      \begin{proof}
      Put $G=\mathrm{SL}_2(\mathbb{F}_3)$. $\widetilde{G}=G^4\rtimes S_4$, where $S_4$ acts by permuting the coordinates on $G^4$. Let $L=\mathbb{P}H^0(C,\Omega^1)\hookrightarrow \mathrm{Sym}^4 C$ as in the previous lemma.
      The cover $Y \to L$ is generically \'{e}tale with Galois group $\widetilde{G}$. Let $Y_0$ be an irreducible component of $Y$, then $Y_0\to L$ is generically \'{e}tale with Galois group a subgroup $\widetilde{H}\subseteq \widetilde{H}$, which surjects onto $S_4$. Let $H=\widetilde{H}\cap G^4$.Now for each pair $(i,j)$ with $1\leq i,j\leq 4$ we have a comutative diagram
      \begin{align*}
     \xymatrix
{	Y_0 \ar[d] \ar[r] & T_\zeta^4 \ar[d]\ar[r]^{pr_{ij}} & T_\zeta^2 \ar[d]\\
	Z  \ar[r] & C^4 \ar[r]^{pr_{ij}} & C^2 }      
      \end{align*}
       The composition map $Z\to C^2$ is generically a 2 to 1 cover. Because $G/[G,G]\cong\mathbb{Z}/3\mathbb{Z}$, $G\times G$ has no non-trivial homomorphism to $\mathbb{Z}/2\mathbb{Z}$, hence the function fields $\mathbb{C}(Z)$ and $\mathbb{C}(T_\zeta^2)$ are linearly disjoint over $\mathbb{C}(C^2)$. It follows that   
     via the projection to the $(i,j)$ factors, there is a surjection $H\twoheadrightarrow G^2$. Let us now consider the image $\overline{H}$ of $H$ under the projection to any 3 coordinates $G^3$. Then for each $a,b \in G$, there is some $(a,1,\phi_1(a))\in \overline{H}$ and $(b,\phi_2(b),1)\in \overline{H}$. Thus the commutator $(aba^{-1}b^{-1},1,1)\in \overline{H}$. It follows that $\overline{H} \supseteq [G,G]^3$. Now for any $a \in [G,G]$, $b\in G$ we have some
$(a,1,1,\psi_1(a))\in H$ by what we just proved, and some $(b,\psi_2(b),\psi_3(b),1) \in G$. Taking commutators and noting that $[G,[G,G]]=[G,G]$ we see that $H\supset [G,G]^4$ strictly. But passing to $(G/[G,G])^4 \rtimes S_4$ which is an $\mathbb{F}_3$-vector space, the image of $H$ correspond to a subspace whose projection to any two coordinates are surjective. Furthermore this image must also be stable under permutation of the coordinates, so it must contain the hyperplane $a+b+c+d=0 \in \mathbb{F}_3$. In particular, the index of $H$ in $G^4$ is at most $3$.
      \end{proof}
    \begin{lem}
    \label{4 rational intersection points}(see \cite{oyono_rationality_2010}) Let $C$ be a smooth plane quartic defined over $\mathbb{F}_q$. If $q>300$, then there exists a line $l$ that intersects $C$ at 4 distinct rational points.

    \end{lem}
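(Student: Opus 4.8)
The plan is to find such a line by projecting $C$ from one of its $\mathbb{F}_q$-rational points and showing that the resulting degree-three map to $\mathbb{P}^1$ has a totally split fibre over an $\mathbb{F}_q$-rational point.

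I first record two reductions. By B\'ezout a line $\ell\not\subseteq C$ meets $C$ in an effective divisor of degree $4$; if this divisor has a point of multiplicity $\geq 2$ it has at most three distinct points, hence at most three $\mathbb{F}_q$-rational points, so a line meets $C$ in four distinct rational points if and only if it meets $C$ in four rational points at all. Moreover, since $C$ is a smooth plane quartic, adjunction gives $\omega_C\cong\mathcal{O}_C(1)$, so the plane embedding is the canonical one and lines cut out exactly the canonical divisors. Thus it suffices to produce a canonical divisor on $C$ that is a sum of four distinct $\mathbb{F}_q$-rational points.

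By the Weil bound $\#C(\mathbb{F}_q)\geq q+1-6\sqrt q>0$ for $q>300$, and as the rational points of $C$ which are flexes, or which lie on one of its (finitely many) bitangent or hyperflex lines, form a set of size bounded independently of $q$, I may fix a \emph{general} rational point $P_0\in C(\mathbb{F}_q)$ (at least once $q$ exceeds an absolute constant). Projection from $P_0$ is a morphism $\pi\colon C\to\mathbb{P}^1$ of degree $\deg C - 1 = 3$, unramified at $P_0$, with only simple ramification, hence with geometric (and arithmetic) monodromy group $S_3$; the only alternative, monodromy $\mathbb{Z}/3$, is handled separately below. If $t\in\mathbb{P}^1(\mathbb{F}_q)$ corresponds to a line $\ell_t$ through $P_0$ that is not the tangent line $\ell_{t_0}$ at $P_0$, then $\ell_t\cap C=P_0+\pi^{-1}(t)$ with $P_0\notin\pi^{-1}(t)$, so $\ell_t$ is of the desired kind precisely when $\pi^{-1}(t)$ consists of three distinct rational points. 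Hence it is enough to exhibit one totally split fibre of $\pi$ over a rational point $t\neq t_0$.

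To count such fibres, let $Y\to\mathbb{P}^1$ be the Galois closure of $\pi$: a smooth, geometrically connected curve carrying a faithful $S_3$-action with $Y/S_3=\mathbb{P}^1$, of genus $10$ by Riemann--Hurwitz in the generic ramification pattern. For unramified $t$ the Frobenius class in $S_3$ is defined, $\pi^{-1}(t)$ is totally split exactly when this class is trivial, and in that case the fibre of $Y$ over $t$ has six rational points while otherwise it has none; since there are only $O(1)$ ramified $t$, each lying under at most six points of $Y$, we get $\#Y(\mathbb{F}_q)=6\cdot\#\{t\in\mathbb{P}^1(\mathbb{F}_q):\mathrm{Frob}_t=1\}+O(1)$. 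The Weil bound $\#Y(\mathbb{F}_q)\geq q+1-20\sqrt q$ then forces $\#\{t:\mathrm{Frob}_t=1\}\geq(q+1-20\sqrt q)/6-O(1)$, which exceeds $1$ (even after discarding $t_0$) once $q$ is large; the degenerate case of cyclic monodromy is easier, since then $\pi$ is already Galois and the totally split fibres number $(\#C(\mathbb{F}_q)-O(1))/3\geq(q+1-6\sqrt q)/3-O(1)>1$ using only the genus-three bound for $C$ itself. The main obstacle --- and the reason the clean threshold is $q>300$ rather than something larger --- is making all these $O(1)$ error terms explicit: bounding the contribution of the ramified fibres of $Y$, choosing $P_0$ so that the ramification of $\pi$ (hence the genus of $Y$) is as small as possible and the monodromy is correctly $S_3$, and checking the geometric connectedness of $Y$ needed for the Weil bound. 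This bookkeeping is carried out in \cite{oyono_rationality_2010}; alternatively one may run the same Chebotarev-style count directly on the incidence variety $\{(P,\ell):P\in\ell\cap C\}$ over the dual plane and its $S_4$-Galois closure, a surface, using Deligne's bounds with an explicit constant.
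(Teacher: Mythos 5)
The paper offers no proof of this lemma --- it is stated with only a citation to \cite{oyono_rationality_2010}, so there is no ``paper's own proof'' to compare against. Your sketch is the standard Chebotarev-plus-Weil-bound argument (project from a rational point, pass to the $S_3$ Galois closure of the resulting trigonal map, count rational points of that closure to force a totally split unramified fibre), which is indeed the line of argument in the cited reference; so at the level of strategy you have the right picture.

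Two remarks. First, your opening ``reduction'' (that a line has four distinct rational intersection points iff it has four rational intersection points at all) is muddled as written and is not actually what your argument uses --- what you really use is that an \emph{unramified} totally split fibre of the projection consists of three distinct rational points none of which is $P_0$, which is correct and needs no preliminary reduction. Second, the crude constants in your sketch do not in fact reach $q>300$: with $g_Y=10$ one needs roughly $q+1-20\sqrt q$ to beat the ramified-fibre contribution (up to $30$ points of $Y$) plus a margin, and one also has to pay for discarding $P_0$'s that are flexes or lie on bitangents; run naively this pushes the threshold well past $300$. You acknowledge this and punt to \cite{oyono_rationality_2010} for the optimization, which is exactly what the paper itself does, so the proposal is consistent with the paper's (nonexistent) proof but, like the paper, does not independently establish the explicit constant.
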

\section{Finding \textit{q}-expansions}
\label{q-expansion}
The aim of this section is to describe a method to compute the $q$-expansion of modular forms that correspond to holomorphic differentials on certain modular curves with non-standard level structures. Except for lemma \ref{Jacobian decomposition}, the rest of the section is only used to obtain $q$-expansions needed for computations on the curves $X(3b,5s^+)$ and $X(3ns^+,7s^+)$ in section \ref{Quadratic Points}. The reader who takes such computations on trusts can therefore skip the rest of this section. As it did not seem easy to extract these sort of computations in the literature, we decided to record it here, and hope that it might be useful for other works. 

Let $\Gamma$ be a congruence subgroup of $\mathrm{SL}_2(\mathbb{Z})$ of square-free level $N$ such that the image of $\Gamma$ mod p is either the Borel, the normalizer of the split or non-split Cartan subgroup of $\mathrm{SL}_2(\mathbb{F}_p)$ for each prime $p | N$.
The modular curve $X(\Gamma)=\mathbb{H}^*/\Gamma$ is thus defined over $\mathbb{Q}$. This is because for each prime $p$ dividing the level, the corresponding subgroup in $\mathrm{SL}_2(\mathbb{F}_p)$ admits an extension to a subgroup of $\GL_2(\mathbb{F}_p)$ with determinant surjecting onto $\mathbb{F}_p^\times$, and hence the corresponding open modular curve is identified with the Shimura variety $\Sh_K=\GL_2(\mathbb{Q})\setminus \GL_2(\mathbb{A})/\mathbb{R}^\times\mathrm{O}(\mathbb{R})K$, where $K$ is the unique open compact subgroup of $\GL_2(\widehat{\mathbb{Z}})$ lifting each subgroup of $\GL_2(\mathbb{F}_p)$ as above. Note because $\det(K)$ is surjective, the Shimura variety is geometrically irreducible, and its $\mathbb{C}$-points are naturally given by $\mathbb{H}/\Gamma$.

Given a list of distinct primes $p_i$ and label *="$b$", "$s^+$" or "$ns^+$", we will write $X(p_i*)$ to denote the modular curve of the above kind such that mod $p_i$ the congruence subgroup is Borel, normalizer of split or non-split Cartan respectively. 

Fixing a prime $p$, denoting the Borel, Normalizer of split/non-split Cartan subgroups of $G=\GL_2(\mathbb{F}_p)$ by $B$, $S$ and $N$ (and the corresponding open compact subgroup of $\mathrm{GL}_2(\mathbb{Z}_p)$ by $B_p$, $S_p$, and $N_p$), we have a relation \cite{edixhoven_result_1996}
\[  \pi_N+\pi_B= v\pi_S v^{-1}+\pi_G   \]
inside the group algebra $\mathbb{Q}[G]$, where $\pi_H$ denotes the projector onto the $H$-invariant parts, and $v$ is an invertible element in $\mathbb{Q}[G]$. 
By applying this relation onto $\End(\Sh_{K(N)})\otimes \mathbb{Q}$, we can thus express (up to isogeny) the Jacobian of each modular curve of the kind we are considering in terms of Jacobian the modular curves of the same kind, but where only Borel or normalizer of split Cartan level structures appear. Furthermore, because the open compact $K(p^{s})\subset \GL_2(\widehat{\mathbb{Z}}_p)$ satisfies
\begin{align*}
    K(p^{s})&=   \left(\begin{smallmatrix}
        p & 0 \\ 
        0 & 1
      \end{smallmatrix}\right)  K_0(p^2)  \left(\begin{smallmatrix}
        p & 0 \\ 
        0 & 1
      \end{smallmatrix}\right)^{-1}   \\
K(p^{s+})&=   \left(\begin{smallmatrix}
        p & 0 \\ 
        0 & 1
      \end{smallmatrix}\right) \langle K_0(p^2),  \left(\begin{smallmatrix}
        0 & \frac{-1}{p} \\ 
        p & 0
      \end{smallmatrix}\right)  \rangle \left(\begin{smallmatrix}
        p & 0 \\ 
        0 & 1
      \end{smallmatrix}\right)^{-1}  , 
       \end{align*} 
the curves with normalizer split Cartan level at $p$ are isomorphic to one with level $K_0(p^2)^+$.

The modular curve $\mathrm{Sh}_{K_0(p^m)K^p}$ has a moduli interpretation in terms of elliptic curves with a cyclic subgroup $C_{p^m}$ of order $p^m$ and level structure $K^p$ away from $p$, and the Atkin-Lehner involution $w_{p^m}$ given by
\begin{align*}
  (E,C_{p^m},K^p) \to (E/C_p,E[p^m]/C_{p^m},K^p)
\end{align*}
corresponds to conjugation action by $\left(\begin{smallmatrix} 0 &-1 \\ p^m &0 \end{smallmatrix}\right)$.
To summarize we have
\begin{lem} \label{Jacobian decomposition} If $p$ is any prime and $K^p$ is a level structure away from $p$, up to isogeny over $\mathbb{Q}$
\begin{align*}
  \mathrm{Jac}(\mathrm{Sh}_{N_pK^p})\times \mathrm{Jac}(\mathrm{Sh}_{B_pK^p}) &\sim \mathrm{Jac}(\mathrm{Sh}_{S_pK^p}) \times \mathrm{Jac}(\mathrm{Sh}_{\GL_2(\mathbb{Z}_p)K^p}) \\
  \mathrm{Jac}(\mathrm{Sh}_{S_pK^p}) &\sim \mathrm{Jac}(\mathrm{Sh}_{K_0(p^2)K^p})^{w_{p^2}}
\end{align*}
  \end{lem}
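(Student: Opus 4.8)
The plan is to deduce both isogeny relations from identities in the group algebra $\mathbb{Q}[G]$ applied to the motive (or Jacobian) of $\mathrm{Sh}_{K(p)K^p}$, exactly as sketched in the text preceding the lemma. For the first relation, I would take the Edixhoven identity $\pi_N+\pi_B = v\pi_S v^{-1}+\pi_G$ in $\mathbb{Q}[G]$ and let it act on $\End(\mathrm{Jac}(\mathrm{Sh}_{K(p)K^p}))\otimes\mathbb{Q}$, where $G=\GL_2(\mathbb{F}_p)$ acts via its action on the full level-$p$ cover $\mathrm{Sh}_{K(p)K^p}\to \mathrm{Sh}_{\GL_2(\mathbb{Z}_p)K^p}$. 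For any open compact $H_p$ between $K(p)$ and $\GL_2(\mathbb{Z}_p)$, the quotient $\mathrm{Sh}_{H_pK^p}$ is the quotient of the cover by $H_p/K(p)$, so the idempotent $\pi_{H_p}$ cuts out (up to isogeny) $\mathrm{Jac}(\mathrm{Sh}_{H_pK^p})$ as a sub-abelian-variety of $\mathrm{Jac}(\mathrm{Sh}_{K(p)K^p})$; more precisely $\pi_{H_p}$ acts on $H^0(\Omega^1)$ with image the forms pulled back from $\mathrm{Sh}_{H_pK^p}$. Since $v$ is invertible, $v\pi_S v^{-1}$ and $\pi_S$ have the same image up to isogeny, and the displayed idempotent identity then gives the first isogeny relation. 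One must note here that all of this descends to $\mathbb{Q}$: the congruence subgroups in question all lift to $K$ with $\det(K)$ surjective onto $\widehat{\mathbb{Z}}^\times$, so the curves and the Hecke/degeneracy correspondences realizing the idempotents are all defined over $\mathbb{Q}$, hence so is the isogeny.

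For the second relation I would use the explicit matrix identity displayed in the text, $K(p^{s+}) = \left(\begin{smallmatrix} p & 0 \\ 0 & 1\end{smallmatrix}\right)\langle K_0(p^2), \left(\begin{smallmatrix} 0 & -1/p \\ p & 0\end{smallmatrix}\right)\rangle \left(\begin{smallmatrix} p & 0 \\ 0 & 1\end{smallmatrix}\right)^{-1}$. Conjugation by $\left(\begin{smallmatrix} p & 0 \\ 0 & 1\end{smallmatrix}\right)$ is an element of $\GL_2(\mathbb{Q}_p)$ and so induces an isomorphism of Shimura varieties $\mathrm{Sh}_{S_pK^p}\xrightarrow{\sim}\mathrm{Sh}_{\langle K_0(p^2),w\rangle K^p}$, where $w$ is the element $\left(\begin{smallmatrix} 0 & -1/p \\ p & 0\end{smallmatrix}\right)$. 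The group generated by $K_0(p^2)$ and $w$ contains $K_0(p^2)$ with index two, and conjugation by $w$ on the tower $\mathrm{Sh}_{K_0(p^m)K^p}$ is precisely the Atkin–Lehner involution $w_{p^2}$ (as recorded just before the lemma via the moduli description $(E,C_{p^2},K^p)\mapsto(E/C_p, E[p^2]/C_{p^2},K^p)$). Therefore $\mathrm{Sh}_{\langle K_0(p^2),w\rangle K^p}$ is the quotient $\mathrm{Sh}_{K_0(p^2)K^p}/\langle w_{p^2}\rangle$, and passing to Jacobians $\mathrm{Jac}(\mathrm{Sh}_{\langle K_0(p^2),w\rangle K^p})\sim \mathrm{Jac}(\mathrm{Sh}_{K_0(p^2)K^p})^{w_{p^2}}$, the $w_{p^2}$-invariant part, which combined with the isomorphism above gives the second relation. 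Again the identification is rational since the Atkin–Lehner involution is defined over $\mathbb{Q}$.

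The main obstacle is bookkeeping rather than any deep input: one must be careful that the abstract group-algebra idempotents actually correspond to the geometrically-defined quotient curves (i.e., that $\pi_{H_p}$, applied to the Albanese/Jacobian, yields $\mathrm{Jac}(\mathrm{Sh}_{H_pK^p})$ up to isogeny and not some twist), and that the various normalizations of the Atkin–Lehner operator and of the matrix $\left(\begin{smallmatrix} 0 & -1/p \\ p & 0\end{smallmatrix}\right)$ match up with the stated moduli action. Both points are standard — the first is the usual relation between Hecke correspondences, trace maps, and sub-Jacobians cut out by idempotents, and the second is the classical computation of Atkin–Lehner in adelic terms — so once the conventions are fixed the lemma follows formally from the two displayed identities in the text.
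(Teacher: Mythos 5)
Your proposal is correct and follows essentially the same route as the paper, which gives no separate proof of the lemma beyond the discussion immediately preceding it (the Edixhoven group-algebra identity applied to the full level-$p$ Jacobian, plus the explicit conjugation relating $K(p^{s+})$ to $\langle K_0(p^2), w_{p^2}\rangle$). You flesh out the details the paper leaves implicit, in particular the rationality of the correspondences and the identification of $\pi_H\cdot\mathrm{Jac}$ with the Jacobian of the quotient curve, but these are exactly the intended ingredients.
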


Thus all isogeny factors of the modular curves we consider are expressible as factors of the standard modular Jacobian $J_0(N)$ and hence are modular abelian varieties attached to suitable newforms $f$ with trivial Nebentype.

  Suppose we want to find the $q$-expansion of the weight 2 modular forms on the modular curvesof the above type. By the above, this reduces to the question of finding vectors fixed under the relevant open compact subgroups given the new vector $f$ in an automorphic representation $\pi_f$ of $\GL_2(\mathbb{A})$ that contributes to the Jacobian of our modular curve, in a way that is easy to extract the $q$-expansion in terms of the $q$-expansion of $f$ (which one can then look up in \cite{mfd}). This reduces to a problem of local representation theory.
  
  Recall that $\pi_f= \bigotimes_p \pi_{f,p}$ has a decomposition into smooth irreducible representations $\pi_{f,p}$ of $\GL_2(\mathbb{Q}_p)$. Since it contributes to the Jacobian of some $X_0(M)$, it must have trivial central character. We also note that the newforms $f$ contributing to our modular curves have conductor at most 2, so at a bad prime $\pi_{f,p}$ is either a supercuspidal representation of conductor 2, a twist of a conductor 1 principal series by a conductor 1 character, or a twist of the Steinberg representation.
  \begin{prop}
  In the above situation, the $\GL_2(\mathbb{F}_p)$-representation of $\pi_{f,p}^{K(1)}$ is irreducible, and is supercuspidal, principal series or Steinberg according to $\pi_{f,p}$.
  \end{prop}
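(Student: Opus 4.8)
Write $\pi=\pi_{f,p}$, and let $K(1)\subset\GL_2(\mathbb{Z}_p)$ be the principal congruence subgroup of level $p$, so that $\GL_2(\mathbb{Z}_p)/K(1)\cong\GL_2(\mathbb{F}_p)=:\bar G$. Since $p$ is odd in all our applications and $\pi$ has trivial central character and conductor exponent $\le 2$, the three possibilities listed above for $\pi$ are: (i) a depth-zero supercuspidal of conductor $p^2$; (ii) $\pi=\mathrm{Ind}_B^{\GL_2(\mathbb{Q}_p)}(\nu_1,\nu_2)$, an irreducible principal series with each $\nu_i$ trivial on $1+p\mathbb{Z}_p$ and $\nu_1/\nu_2$ of conductor exactly $p$; (iii) $\pi=\chi\otimes\mathrm{St}$ with $\chi$ quadratic and trivial on $1+p\mathbb{Z}_p$ (the quadratic condition being forced by the trivial central character). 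The plan is to compute $\pi^{K(1)}$ directly as a $\bar G$-representation in each case, using the induced-from-Borel model in (ii) and (iii), and the description of $\pi$ as a compact induction in (i).

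Case (ii) is immediate: by the Iwasawa decomposition $\GL_2(\mathbb{Q}_p)=B(\mathbb{Q}_p)\GL_2(\mathbb{Z}_p)$ one identifies $\pi$ with the space of smooth $f\colon\GL_2(\mathbb{Z}_p)\to\mathbb{C}$ transforming under $B(\mathbb{Z}_p)$ by $(\nu_1,\nu_2)$; as each $\nu_i$ is trivial on $1+p\mathbb{Z}_p$, the $K(1)$-invariant vectors are exactly the functions factoring through $\bar G$, giving $\pi^{K(1)}\cong\mathrm{Ind}_{\bar B}^{\bar G}(\bar\nu_1,\bar\nu_2)$, where $\bar\nu_i$ is the reduction of $\nu_i$. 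Since $\nu_1/\nu_2$ has conductor exactly $p$ it is nontrivial on $\mathbb{Z}_p^\times/(1+p\mathbb{Z}_p)\cong\mathbb{F}_p^\times$, so $\bar\nu_1\ne\bar\nu_2$ and $\mathrm{Ind}_{\bar B}^{\bar G}(\bar\nu_1,\bar\nu_2)$ is an irreducible principal series of $\bar G$. For case (iii), recall $\mathrm{St}$ is a subrepresentation of $\mathrm{Ind}_B^{\GL_2(\mathbb{Q}_p)}(|\cdot|^{1/2},|\cdot|^{-1/2})$ with quotient the trivial representation, and apply the exact functor $(-)^{K(1)}$ to the twist by $\chi$ of this sequence. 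The middle term has $K(1)$-invariants $\mathrm{Ind}_{\bar B}^{\bar G}(\bar\chi,\bar\chi)=(\bar\chi\circ\det)\oplus(\bar\chi\otimes\mathrm{St}_{\bar G})$, and the quotient $\chi\circ\det$ contributes exactly the summand $\bar\chi\circ\det$, so $(\chi\otimes\mathrm{St})^{K(1)}\cong\bar\chi\otimes\mathrm{St}_{\bar G}$, a twist of the Steinberg representation of $\bar G$ (the ordinary Steinberg when $\chi$ is unramified).

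The crux is case (i). Since $p$ is odd and $\pi$ has conductor exactly $p^2$, the theory of types presents $\pi$ as $\pi\cong\operatorname{c-Ind}_{Z\GL_2(\mathbb{Z}_p)}^{\GL_2(\mathbb{Q}_p)}\widetilde\lambda$, where $Z$ is the centre, $\lambda$ is an irreducible \emph{cuspidal} representation of $\bar G$ inflated to $\GL_2(\mathbb{Z}_p)$, and $\widetilde\lambda$ extends it to $Z\GL_2(\mathbb{Z}_p)$. I would compute $\pi^{K(1)}=\mathrm{Hom}_{K(1)}(\mathbf 1,\mathrm{Res}_{\GL_2(\mathbb{Z}_p)}\pi)$ via Mackey's formula along $\GL_2(\mathbb{Z}_p)\hookrightarrow\GL_2(\mathbb{Q}_p)$: the double cosets $\GL_2(\mathbb{Z}_p)\backslash\GL_2(\mathbb{Q}_p)/Z\GL_2(\mathbb{Z}_p)$ are represented by $a_n=\mathrm{diag}(p^n,1)$ for $n\ge 0$, yielding $\mathrm{Res}_{\GL_2(\mathbb{Z}_p)}\pi\cong\lambda\oplus\bigoplus_{n\ge 1}\mathrm{Ind}_{H_n}^{\GL_2(\mathbb{Z}_p)}\widetilde\lambda^{a_n}$ with $H_n=\GL_2(\mathbb{Z}_p)\cap a_n\GL_2(\mathbb{Z}_p)a_n^{-1}=\{g\in\GL_2(\mathbb{Z}_p):g_{12}\in p^n\mathbb{Z}_p\}$. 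The $n=0$ term is $\lambda$, on which $K(1)$ acts trivially. For $n\ge 1$ one has $H_n\subseteq I$ for an Iwahori $I$ with $I/K(1)=\bar B$ a Borel of $\bar G$; writing $\mathrm{Ind}_{H_n}^{\GL_2(\mathbb{Z}_p)}=\mathrm{Ind}_I^{\GL_2(\mathbb{Z}_p)}\circ\mathrm{Ind}_{H_n}^I$ reduces the $K(1)$-invariants of the $n$-th term to $\mathrm{Ind}_{\bar B}^{\bar G}$ of the $(H_n\cap K(1))$-invariants of the $a_n$-conjugate $\widetilde\lambda^{a_n}$; an explicit conjugation by $a_n$ identifies $\overline{a_n^{-1}(H_n\cap K(1))a_n}$ with the unipotent radical $\bar U\subset\bar B$ and shows this invariant space equals $\lambda^{\bar U}$, which vanishes because $\lambda$ is cuspidal. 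Hence each $n\ge 1$ term contributes $0$, so $\pi^{K(1)}\cong\lambda$, irreducible and cuspidal, completing the trichotomy.

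The main obstacle will be the bookkeeping in this last step — pinning down the $H_n$, the restriction of $\widetilde\lambda^{a_n}$ to them, and verifying the reduction to $\lambda^{\bar U}$ for \emph{all} $n\ge 1$, which is exactly where cuspidality of $\lambda$ enters. A more structural variant of the same argument avoids the coset-by-coset analysis: $\pi^{K(1)}$ is a semisimple $\bar G$-module with $(\pi^{K(1)})^{\bar U}=\pi^{I^1}=0$, since a supercuspidal has no pro-$p$-Iwahori-fixed vectors, so $\pi^{K(1)}$ has no non-cuspidal constituents; meanwhile its cuspidal part is forced to equal $\lambda$ by the defining property $\dim\mathrm{Hom}_{\GL_2(\mathbb{Z}_p)}(\lambda',\pi)=\delta_{\lambda',\lambda}$ of the type $(\GL_2(\mathbb{Z}_p),\lambda)$ over cuspidal $\lambda'$. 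In that version, establishing the vanishing of the pro-$p$-Iwahori-invariants cleanly is the point requiring care.
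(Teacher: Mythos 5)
Your proposal is correct, and in the principal series and Steinberg cases it runs along essentially the same lines as the paper: the paper identifies $\pi^{K(1)}\cong\mathrm{Ind}\,\overline{\chi}\otimes\overline{\chi}^{-1}$ for the principal series case by the same explicit identification you use, and dispatches the Steinberg case "from this" — i.e. by the same exactness-of-invariants argument applied to the short exact sequence defining $\mathrm{St}$ that you write out. The substantive difference is the supercuspidal case: the paper simply cites section 3 of Loeffler--Weinstein, whereas you give a self-contained Mackey computation for the depth-zero compact induction model $\pi\cong\operatorname{c-Ind}_{Z\GL_2(\mathbb{Z}_p)}^{\GL_2(\mathbb{Q}_p)}\widetilde\lambda$, showing that every double-coset piece with $n\geq 1$ has $K(1)$-invariants computed by $\lambda^{\bar U}=0$. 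I checked the conjugation bookkeeping: with $H_n\cap K(1)=\{1+pA: A_{12}\in p^{n-1}\mathbb{Z}_p\}$, the group $a_n^{-1}(H_n\cap K(1))a_n$ reduces mod $p$ onto $\bar U$, so the invariants land in $\lambda^{\bar U}$, which vanishes by cuspidality — this is exactly right. Your argument therefore buys a proof that does not rely on an external reference where the paper does. Two small remarks, neither a gap: the sub/quotient roles of $\mathrm{St}$ and the trivial representation in $\mathrm{Ind}_B(|\cdot|^{1/2},|\cdot|^{-1/2})$ depend on the normalization convention, but either way the three-term exact sequence and exactness of $(-)^{K(1)}$ give the same conclusion; and your structural alternative for the supercuspidal case relies on the fact that supercuspidals have no pro-$p$-Iwahori-fixed vectors, which as you note requires some input (it follows from Borel--Casselman type results identifying $\pi^{I^1}$ with invariants of the Jacquet module), so the explicit Mackey version is the cleaner route to a self-contained proof.
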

  \begin{proof}
    In the supercuspidal case, this follows from section 3 of \cite{loeffler_computation_2012}. In the principal series case, $\pi_{f,p}\cong \mathrm{Ind}\, \chi\otimes\chi^{-1}$ with  $\mathrm{cond}\chi$=1, and the explicit description shows that $\pi_{f,p}^{K(1)}=\mathrm{Ind}\,\overline{\chi}\otimes\overline{\chi}^{-1}$, where $\overline{\chi}$ is the reduction of $\chi$. The Steinberg case also follows from this.
    \end{proof}
    We will now describe how to find the fixed vectors under the relevant open compact subroups according to each case.
\subsection{The principal series case}
In this section, we abbreviate $\pi=\mathrm{Ind}\,\chi\otimes\chi^{-1}$ for the principal series representation of $G=\GL_2(\mathbb{F}_p)$ given by a character $\chi:\mathbb{F}_p^\times\to \mathbb{C}^\times$. Let $B$, $N$, $T$, $T'$  denote the subgroups of upper triangular, unipotent upper triangular, diagonal and a non-split torus in $G$. A choice of $T'$ is fixed by choosing a non-square $a\in \mathbb{F}_p$, and taking the image of $\mathbb{F}_{p^2}^\times$ inside $G$ via the multiplication action on $\mathbb{F}_{p^2}\cong \mathbb{F}_p^2$ where the last isomorphism comes from the choice of basis $1$, $\alpha=\sqrt{a}$. With this choice, $T'$ consists of matrices of the form 
\[\left(\begin{smallmatrix} u &av\\ v &u 
\end{smallmatrix}\right)\]

Recall that the vector space underlying $\pi$ is the space of functions $f:G\to \mathbb{C}$ such that
\[ f(bg)=\chi\otimes\chi^{-1}(b)f(g)\]
and G acts on $\pi$ by right translation. The coset decomposition
\[G= B \amalg_{x\in\mathbb{F}_p} B  \left(\begin{smallmatrix}
        0 & 1 \\ 
        1 & x
      \end{smallmatrix}\right)   \]
identifies the vector space underlying $\pi$ with the space of functions on $\mathbb{P}^1(\mathbb{F}_p)$. The coset $B$ corresponds to $\infty$.

Computing characters, we see that as a representation of $B$, $\pi$ decomposes as
\[ \chi\otimes \chi^{-1} \oplus \chi^{-1}\otimes \chi \oplus V  \]
where $V$ is the unique $p-1$-dimensional irreducible representation of $B$ with trivial central character. As a representation of $N$, $V$ decomposes into lines
\[V=\oplus V_\psi\]
as $\psi$ runs through the non-trivial characters of $N$.

The function $\delta_\infty$ gives a generator for the line $\chi\otimes\chi^{-1}$, and $\delta_0=\chi(-1)w\delta_\infty$, where $w=\left(\begin{smallmatrix} 0 & -1\\ 1 &0
\end{smallmatrix}\right)$
As functions on $\mathbb{F}_p\subset \mathbb{P}^1(\mathbb{F}_p)$ we have
\[\delta_0(x)=\frac{1}{p}\sum_\psi \psi(x),\]
thus one recovers generators $v_\psi$ (corresponding to the function $\frac{1}{p}\psi$) of $V_\psi$ as the $\psi$-isotypic components of $\delta_0$.

Now suppose $f$ is a function invariant under $T'$. Then
\begin{align*}
   f(\left(\begin{smallmatrix} u &av\\ v &u \end{smallmatrix}\right))= f(\left(\begin{smallmatrix} 1 &0\\ 0 &0\end{smallmatrix}\right)) \end{align*}
so for $v\neq 0$
\[
f(\left(\begin{smallmatrix} av-\frac{u^2}{v} &u\\ 0 &v  \end{smallmatrix}\right) \left(\begin{smallmatrix} 0 &1\\ 1 &\frac{u}{v} \end{smallmatrix}\right))=f(\left(\begin{smallmatrix} 1 &0\\ 0 &1 \end{smallmatrix}\right))
\]
and hence as a function
\[ f(x)=\chi(a-x^2)^{-1}f(\infty)\]
One can check that conversely, this function gives a $T'$-fixed vector.
This show that $\pi^{T'}$ is one-dimensional, and we can write down a non-zero element in it by
\[f= \delta_\infty+\displaystyle\sum_{\psi:\mathbb{F}_p\to \mathbb{C}^\times} A_\psi v_\psi,\]
with 
\[A_\psi=\displaystyle\sum_{x\in\mathbb{F}_p}  \chi(a-x^2)^{-1}\overline{\psi}(x)\]

Combining this and a similar computation for $T$ we get
\begin{prop}
Let   $\pi=\mathrm{Ind}\, \chi\otimes\chi^{-1}$, $\chi\neq 1$.
\begin{itemize}
  \item 
  $\dim \pi^T=1$, and $\pi$ admits $N(T)$-fixed vectors if and only if $\chi(-1)=1$.
  \item 
  $\dim \pi^{T'}=1$, and $\pi$ admits $N(T')$-fixed vectors if and only if $\chi(-1)=1$.
  \end{itemize}
 
\end{prop}
\subsection{The cuspidal case}\label{Cuspidal formula}
This case is dealt with in \cite{Bar}.
Keeping the notation in the previous section, except that $\pi$ is now a cuspidal representation of $G$ with trivial central character, corresponding to a character $\theta:\mathbb{F}_{p^2}^\times\to\mathbb{C}^\times$.

As a representation of $B$, $\pi$ is the unique irreducible $p-1$-dimensional representation with trivial central character. As a representation of $N$, it decomposes into the sum of all the non-trivial additive characters $\psi$ of $N$.
Fixing such a non-trivial additive character, $\pi$ has a model on the space of functions on $\mathbb{F}_p^\times$ \cite{piatetski-shapiro_complex_1983} (the "Kirillov model") with the action of $G$ given by
\[\left(\begin{smallmatrix} u &v \\0 &w \end{smallmatrix}\right) f(x)=\psi((v/w)x)f((u/w)x)
 \]
 and
 \[ g.f(x)= \displaystyle\sum_{y\in\mathbb{F}_p\times} k(x,y;g)f(y) \]
where $g=\left(\begin{smallmatrix} u &v \\z &w\end{smallmatrix}\right)$ and
\[k(x,y;g)= -\frac{\psi(ux+wy/z)}{p}\displaystyle\sum_{\mathrm{N}(t)=(x/y)\det(g)} \psi(-(y/z)\mathrm{Tr}(t))\theta(t) \]

The vector space underlying $\pi$ has a basis consisting of $\delta$ functions, and the $k(x,y;g)$ gives the coefficients of the matrix of g acting with respect to this basis. Note that the function $\displaystyle\sum_x \delta_x$ is clearly the unique function up to scaling that is fixed under $T$.

Because $\pi$ has trivial central character, a $T'$-fixed vector can be obtained by 
\begin{align*}
&\quad \delta_1+\displaystyle\sum_{u}\left(\begin{smallmatrix} u &a \\1 &u\end{smallmatrix}\right)\delta_1 \\
&=\delta_1+\displaystyle\sum_{u}\sum_{x\in\mathbb{F}_p^\times}k(x,1;\left(\begin{smallmatrix} u &a \\1 &u\end{smallmatrix}\right))\delta_x \\
&=\delta_1+A_{\theta,x}\delta_x
  \end{align*}
where
\begin{align*}
  A_{\theta,x}&=\displaystyle\sum_u k(x,1;\left(\begin{smallmatrix} u &a \\1 &u\end{smallmatrix}\right))
\\ &=-\frac{1}{p}\displaystyle\sum_{u\in\mathbb{F}_p}\sum_{\mathrm{N}(t)=x(u^2-a)} \psi(u(x+1)-\mathrm{Tr}(t))\theta(t)
\end{align*}
In summary, starting with a $T$-fixed vector $f$ in $\pi$, the above gives a formula for the unique $T'$-fixed vector in $\pi$. Note that all operations we needed to perform have simple effect on $q$-expansions, as we only needed to decompose $f$ into isotypic parts for $N$. 
\subsection{$q$-expansions}
Suppose we start with the $q$-expansion of a newform $f\in S_2(\Gamma_0(N))$, and for each prime $p$ either $p||{N}$ or $p^2|| N$ and we choose a label "$s^+$" or "$ns^+$". We wish to find the $q$-expansion of the (unique up to scaling, if it exists) form that correspond to the fixed vector under the open compact that has the corresponding label at each $p| N$. It turns out to be more convenient to work with the form that is $p$-primitive for each prime $p$ where $\pi_{f,p}$ is ramified principal series of conductor 2 and the label of $p$ is "$ns^+$. Thus we will take as input a newform $f\in S_2(\Gamma(N),\chi^2)$ where $\chi$ is a Dirichlet character of square-free conductor $M$ such that $\mathrm{gcd}(M,N/M)=1$.

For any Dirichlet character $\chi$, we have a unique adelization
\[\omega_{\chi}: \mathbb{Q}^\times\setminus \mathbb{A}^\times\to \mathbb{C}^\times \]
by requiring $\omega_{\chi,p}(\varpi_p)=\chi(p)$ for all $(p,M)=1$ and uniformizer $\varpi_p$ at $p$. This implies the restriction of $\omega_\chi$ to $\widehat{\mathbb{Z}}^\times$ factors through $\chi^{-1}:(\mathbb{Z}/M\mathbb{Z})^\times \to \mathbb{C}^\times$.
We thus obtain a character
\[\omega: K_0(N):\to \mathbb{C}^\times \]
given by
\[\omega: \left(\begin{smallmatrix} a &b \\ c &d \end{smallmatrix}\right) \mapsto \omega_\chi((d/a)_N)\]
Note this is trivial on the central subgroup.

For such $f$ one associates the automorphic form
\[ \phi_f: \GL_2(\mathbb{Q})\setminus \GL_2(\mathbb{A})/Z(\mathbb{A}) \to \mathbb{C}\]
given by
\[\phi_f(\gamma g kz)=\omega(k)j(g_\infty,i)^{-2}\det(g)f(g_\infty i)
\]
for $\gamma \in \GL_2(\mathbb{Q}), g=(g_\infty,g_f)\in\GL_2(\mathbb{A}), k\in K_0(N), z \in Z(\mathbb{A})$, which gives a well-defined function because of the modularity condition on $f$. Here  $j( \left(\begin{smallmatrix}  a &b \\ c &d \end{smallmatrix}\right),z)=cz+d$. Note that 
\[ \phi_f(g_\infty k_\infty)=j(k_\infty,i)^{-2}\det(k_\infty)\]
or each $k_\infty\in \mathrm{O}(\mathbb{R})$. Let $\mathcal{A}_2$ denote the space of automorphic forms with this condition at $\infty$. It is a representation of $\GL_2(\mathbb{A}_f)$.

The $\GL_2(\mathbb{A}_f)$-span of $\phi_f$ inside $\mathcal{A}_2$ generates a smooth irreducible representation which is isomorphic to $\pi_f^{\infty}$, the finite part of the automorphic representation with trivial central character attached to $f$ (note this is a twist of the usual automorphic representation attached to $f$). The function $f$ correspond to a vector $\otimes_p v_p$ where $v_p$ is the new vector for each $(p,M)=1$, while for $p| M$, $\pi_{f,p}^{K(p)}=\mathrm{Ind}\,\chi_p\otimes \chi_p^{-1}$, and $v_p$ spans the unique subrepresentation of $\chi\otimes\chi^{-1}$ of $B(\mathbb{F}_p)$ inside it. We can thus find the invariant vector under the new open compact subgroup we want by performing the local operations in the previous section prime by prime. It remains to reinterpret this in terms of functions on $\mathbb{H}$ and their $q$-expansion.

There is a linear map
\[ cl: \mathcal{A}_2\to \mathrm{Maps}(\mathbb{H},\mathbb{C})  \]
given by
\[ \phi \mapsto z=g_\infty i \mapsto \phi((g_\infty,1))j(g_\infty,i)^2\det(g_\infty)^{-1}
\]
which is well-defined because of the automorphic condition at $\infty$.
This map is not injective, however if $\det: K\subset \GL_2(\widehat{\mathbb{Z}}) \to \widehat{\mathbb{Z}}^\times$ is surjective then the above maps restricted to the subspace of $K$-invariant (or more generally, eigenvectors for $K$) automorphic forms is injective, which holds in the case at hand. Given an element $g_f\in\GL_2(\mathbb{A}_f)$ and an automorphic form which is an eigenvector for some character $\omega$ of $K$, one has
\begin{align*}
&\quad cl(g_f.\phi)(g_\infty i)=\phi(g_\infty, g_f)j(g_\infty,i)^2\det(g_\infty)^{-1}\\
&=\phi((\gamma g_\infty,\gamma g_f))j(g_\infty,i)^2\det(g_\infty)^{-1}\\
&=\omega(\gamma g_f)cl(\phi)(\gamma g_\infty i) j(\gamma,g_\infty i)^{-2} \det(\gamma)\\
&=\omega(\gamma g_f)cl(\phi)|\gamma
\end{align*}
for any $\gamma \in \GL_2(\mathbb{Q})\cap Kg_f^{-1}$
\begin{itemize}
  \item 
  For $g_f=(..,1, \left(\begin{smallmatrix}  p &0 \\ 0 &1 \end{smallmatrix}\right),.. )$ where the matrix is at the component at $p$ and $K$ is such that $K_l$ contains the split Cartan open compact, we can choose $\gamma= \left(\begin{smallmatrix}  \frac{1}{p} &0 \\ 0 &1 \end{smallmatrix}\right)$ and the effect is given by
  \[ f \mapsto \omega^{(p)}\left(\begin{smallmatrix}  \frac{1}{p} &0 \\ 0 &1 \end{smallmatrix}\right)f(z/p)/p\]

  \item  For $g_f=(..,1, \left(\begin{smallmatrix}  1 &1 \\ 0 &1 \end{smallmatrix}\right),.. )$ we can choose  $\gamma= \left(\begin{smallmatrix}  1 &m \\ 0 &1\end{smallmatrix}\right)$ with $m=-1$ mod $p^r$, and $m=0$ mod $N/p^r$, where $N$ is the level of $K$.
\item 
 For $g_f=(..,1, \left(\begin{smallmatrix}  0 &-1 \\ 1 &0 \end{smallmatrix}\right),.. )$ we can choose
 $\gamma= \left(\begin{smallmatrix}  x & y \\ Mz &p^r w\end{smallmatrix}\right)\in \mathrm{SL}_2(\mathbb{Z})$ where $N=p^r M$ is the level of $K$, $p^r|| N$ such that $\gamma=1$ mod $M$ and $\gamma=\left(\begin{smallmatrix}  0 &-1 \\ 1 &0 \end{smallmatrix}\right)$ mod $p^r$. This means that $\gamma= w_{p^r} \left(\begin{smallmatrix}  1/p^r &0 \\ 0 &1 \end{smallmatrix}\right)$ where $w_{p^r}$ is a matrix defining the classical Atkin-Lehner operator (in the $\Gamma_0(N)$-level case).
 
\end{itemize}
\begin{example}
  Let $f\in S_2^{new}(\Gamma_0(5.7^2))$ be a newform with Atkin-Lehner eigenvalues 1 for $w_5$ and $w_{7^2}$. This form has Hecke field $\mathbb{Q}(\sqrt{2})$, and is unique up to Galois conjugacy. Using SAGE, we find that the local representation $\pi_{f,7}$ is cuspidal, whose corresponding parameter is the pair of characters of the unramified extension $\mathbb{Q}_7[s]/ (s^2 + 6s + 3 )$ with conductor 1 given by $s\mapsto d, 7 \mapsto 1$, where $d$ is a root of $x^2+\sqrt{2}x+1=0$. According to \cite{loeffler_computation_2012} Theorem 3.10 and the table in \cite{edixhoven_result_1996}, this means $\pi_{f,7}^K(7)$ is the cuspidal representation given by the reduction mod 7 of the above character on $\mathbb{Z}_7^\times$
  
  By the above, the form $f(z/7)$ correspond to a vector whose component at $7$ is fixed under the normalizer of the split Cartan open compact. We have
  \[ f(z/7)=\displaystyle\sum_{n>0} a_n t^n \]
  where $t=e^{2\pi i z/7}=q^{1/7}$. Thus if we write $f_i=\displaystyle\sum_{n=i \mathrm{ mod }7} a_n t^n$, then 
\[ f_i(z-1)=\zeta_7^{-i}f_i(z)  \]  
hence the $f_i$ are the $\psi^i$-isotypic part of $f(z/7)$ where $\psi:\mathbb{F}_7\to \mathbb{C}^\times$ is the character sending $1$ to $\zeta_7^{-1}$. Thus a vector fixed under $K(5b,7ns^+)$ in $\pi_f$ is given by 
\[ 
f_1-\frac{1}{7}\displaystyle \sum_{x\in\mathbb{F}_7^\times}\sum_{u\in \mathbb{F}_7}  \sum_{\mathrm{N}(t)=x(u^2+1)} \psi(u(x+1)-\mathrm{Tr}(t))\theta(t)
f_x  \]
\end{example}

The procedure we have is as follows:
\begin{itemize}
  \item Step 1: For primes $p$ where the label is "$s^+$" or "$ns^+$" but $f$ is supercuspidal at $p$, choose an ordering and successively replace $f$ with
  \[ \chi(p)^{-1}f(z/p)/p \]
  if $p^2|| N$
  and replace $f$ with
  \[ f+ \lambda_p f(z/p)/p \]
  if $p|| N$, and $\lambda_p$ is the eigenvalue of $w_{p}$ acting on $f$ (beware that this need not be $\pm 1$ if $f$ has non-trivial Nebentype).
  These operations have simple effects on the $q$-expansions. The outcome of this step is a vector $\otimes v_p$ where $v_p$ is fixed under the normalizer of the split Cartan at all primes where the modifications were made.
  \item 
  Step 2: For primes $p| M$, in order to use the result in the previous section we only need to find the $q-$expansion of $(..,1, \left(\begin{smallmatrix}  0 &-1 \\ 1 &0 \end{smallmatrix}\right),.. )\phi_f$. This commutes with all the operations done in step 1.
  First we determine this in the case $f$ was the newform we started with. Because $f$ has conductor $1$ at $p$ and non-trivial Nebentype, we have $a_p(f)\neq 0$, and a result of Atkin-Li \cite{atkin_twists_1978} Theorem 2.1 gives 
  \[ f| _{w_p}=\lambda \widetilde{f} \]
  where $\widetilde{f}$ is a newform whose Nebentype at $p$ is inverse of that of $f$ and whose $q$-expansion is explicitly read off $f$ and $\chi_p$. Furthermore the "pseudo-eigenvalue" $\lambda$ is given by
  \[\lambda= g(\chi_p)/a_p\]
  where $g(\chi_p)$ is the Gauss sum of the p-primary part $\chi_p$ of $\chi$.
  By the above, $(..,1, \left(\begin{smallmatrix}  0 &-1 \\ 1 &0 \end{smallmatrix}\right),.. )\phi_f$ corresponds to the function
  \[ f| _{w_p}|{\left(\begin{smallmatrix} 1/p &0 \\ 0 &1\end{smallmatrix}\right)} =\frac{g(\chi_p)}{a_p}\widetilde{f}| \left(\begin{smallmatrix} 1/p &0 \\ 0 &1\end{smallmatrix}\right) \]
  and hence we know its $q$-expansion. 
  
  In general, we need to know the action of the compositions of several such (commuting operators), and this is done similiarly as above, using the relations (see \cite{atkin_twists_1978})
  \[ f| \left(\begin{smallmatrix} 1/q &0 \\ 0 &1\end{smallmatrix}\right) | _{w_p} =\epsilon(q)f| _{w_p} | \left(\begin{smallmatrix} 1/q &0 \\ 0 &1\end{smallmatrix}\right)  \]

 Finally, we separate the isotypic components or the action of $N(\mathbb{F}_p)$ and thus obtain a $q$-expansion for a vector whose component at $p$ is now fixed for $K_{ns^+}(p)$, using the formula in the previous section.
  \item 
  Step 3: For primes $p$ where the label is "$ns^+$" but $f$ is supercuspidal at $p$ we the use in formula \ref{Cuspidal formula}. We see that we only have to separate the powers of $q$ according to their class mod $p$ to decompose into the isotypic parts for the local factor at $p$ of $f$.
\end{itemize}
\begin{example}
  We wish to find the $q$-expansion of a vector in $S_2(5s^+,7ns^+)$ with system of Hecke eigenvalues corresponding to (a Galois conjugate of) the newform $245h$ in $S_2(\Gamma_0(245))$, with notation in \cite{mfd}. This newform is not $7$-primitive, and the local representation at $7$ is ramified principal series $\mathrm{Ind}\, \omega_\chi \otimes\omega_\chi^{-1}$ such that $\omega_\chi:\mathbb{Z}_7^\times\to \mathbb{C}^\times$ is a character of exact order $3$. Hence we start with a newform $f\in S_2(\Gamma_1(35),\epsilon)$ with $\epsilon=\chi^2$ the cubic Dirichlet character mod $7$ that sends $3 \mapsto \zeta_3$. We first change $f$ to
  \[f+ \lambda_5 f| \left(\begin{smallmatrix} 1/5 &0 \\ 0 &1  \end{smallmatrix}\right) \]
  to make the component at $5$ fixed under the normalizer of the split Cartan, and then we need to compute the isotypic components for the unipotent radical $N\subset \GL_2(\mathbb{F}_7)$ of 
  \[ (f+ \lambda_5 f| \left(\begin{smallmatrix} 1/5 &0 \\ 0 &1  \end{smallmatrix}\right)) | _{w_7} | \left(\begin{smallmatrix} 1/7 &0 \\ 0 &1  \end{smallmatrix}\right)  \]
  \[= \lambda_7 \widetilde{f}| \left(\begin{smallmatrix} 1/7 &0 \\ 0 &1  \end{smallmatrix}\right)+ \lambda_7\lambda_5\epsilon(5)\widetilde{f}|  \left(\begin{smallmatrix} 1/35 &0 \\ 0 &1  \end{smallmatrix}\right)\]
  where 
  \[f| _{w_7}=\lambda_7\widetilde{f},\]
  \[\lambda_5=\frac{-1}{a_5}\] 
  \[\lambda_7=\frac{\sum_{\mathbb{F}_7^\times} \epsilon(x) e^{2\pi i x}}{a_7}\]
  
\end{example}
\begin{example}
  We want to find the $q$ expansion of a vector in $S_2(3ns^+,7s^+)$ with system of Hecke eigenvalues corresponding to the newform $63b$. This newform has a cuspidal local representation at $3$, whose corresponding parameter is the pair of characters of the unramified extension $\mathbb{Q}_3[s]/ (s^2 + 2s + 2 )$ with conductor 1 given by $s\mapsto d, 7 \mapsto 1$, where $d$ is a root of $d^2+1=0$. We first replacee $f$ with $f(z)-f(z/7)/7$ to obtain a vector that has correct level at $7$
  
  The form $f(z/3)$ correspond to a vector whose component at $3$ is fixed under the normalizer of the split Cartan open compact. We have
  \[ f(z/3)=\displaystyle\sum_{n>0} a_n t^n \]
  where $t=e^{2\pi i z/7}=q^{1/7}$. Thus if we write $f_i=\displaystyle\sum_{n=i \mathrm{ mod }7} a_n t^n$, then 
\[ f_i(z-1)=\zeta_7^{-i}f_i(z)  \]  
hence the $f_i$ are the $\psi^i$-isotypic part of $f(z/7)$ where $\psi:\mathbb{F}_7\to \mathbb{C}^\times$ is the character sending $1$ to $\zeta_7^{-1}$. Thus a vector fixed under $K(5b,7ns^+)$ in $\pi_f$ is given by 
\[ 
f_1-\frac{1}{7}\displaystyle \sum_{x\in\mathbb{F}_7^\times}\sum_{u\in \mathbb{F}_7}  \sum_{\mathrm{N}(t)=x(u^2+1)} \psi(u(x+1)-\mathrm{Tr}(t))\theta(t)
f_x  \]
\end{example}
  
\section{Gonality of modular curves}
\label{Gonality}
Fix a totally real field $F$. In view of Theorem \ref{Modularity lifting} and section \ref{Residual modularity}, an elliptic curve $E$ over $F$ is modular, unless 
$\overline{\rho}_{E,p}|_{G_{F(\zeta_p)}}$ is not absolutely irreducible for each $p=3,5,7$.
\begin{lem}
  If $G$ is a subgroup of $\GL_2(\mathbb{F}_p)$ which is not absolutely irreducible then $G$ is a subgroup of a Borel subgroup or a non-split torus.
\end{lem}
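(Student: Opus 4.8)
The plan is to study the finite set $\mathcal{L}$ of lines in $\overline{\mathbb{F}_p}^{2}$ fixed by the tautological action of $G$. The hypothesis that $G$ is not absolutely irreducible says exactly that $\mathcal{L} \neq \emptyset$, and since $G$ is cut out by equations over $\mathbb{F}_p$ the set $\mathcal{L}$ is stable under $\Gal(\overline{\mathbb{F}_p}/\mathbb{F}_p)$, which is topologically generated by the Frobenius $\sigma : x \mapsto x^{p}$. I would then run a case analysis on $\#\mathcal{L}$ together with the Galois action on it.

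First, if $\mathcal{L}$ contains a $\sigma$-fixed line $L$, then $L$ is $\mathbb{F}_p$-rational, so $G$ is contained in $\Stab_{\GL_2(\mathbb{F}_p)}(L)$, which is a Borel subgroup, and we are done. Next, if $\#\mathcal{L} \geq 3$, I would pick $L_1, L_2, L_3 \in \mathcal{L}$, pass to a basis of $\overline{\mathbb{F}_p}^{2}$ adapted to $L_1 \oplus L_2$, and observe that an element of $\GL_2$ fixing all three of these lines is forced to be scalar; thus $G$ consists of scalars and lies in a Borel. This disposes of everything except the case where $\mathcal{L}$ has no $\sigma$-fixed element and $\#\mathcal{L} \le 2$; but a lone Galois-stable line would be Galois-fixed, so necessarily $\mathcal{L} = \{L, \sigma(L)\}$ with $L$ defined over $\mathbb{F}_{p^2}$ but not over $\mathbb{F}_p$ and $\sigma$ interchanging the two lines.

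In that last case I would fix $0 \ne v$ spanning $L$: for each $g \in G$ we get $g v = \lambda(g) v$ with $\lambda(g) \in \mathbb{F}_{p^2}^{\times}$, so $G$ lands in $T' := \{ h \in \GL_2(\mathbb{F}_p) : hL = L \}$. Writing $h \in T'$ in the $\overline{\mathbb{F}_p}$-basis $(v, \sigma(v))$ of $\overline{\mathbb{F}_p}^{2}$ and using that $h$ commutes with $\sigma$ (since $h$ is $\mathbb{F}_p$-rational) shows $h = \mathrm{diag}(\mu, \sigma(\mu))$ with $\mu \in \mathbb{F}_{p^2}^{\times}$; this identifies $T'$ with $\mathbb{F}_{p^2}^{\times}$, a non-split torus of $\GL_2(\mathbb{F}_p)$, and gives $G \subseteq T'$, finishing the proof.

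There is no serious obstacle here: the whole argument is bookkeeping of $G$-stable lines over $\overline{\mathbb{F}_p}$ and their Galois orbits. The only points needing (elementary) verification are the two structural facts invoked above — that an element of $\GL_2$ with three distinct fixed lines is scalar, and that the stabilizer of a line genuinely defined over $\mathbb{F}_{p^2}$ is a non-split Cartan subgroup — together with the harmless remark that a single Galois-stable line is automatically $\mathbb{F}_p$-rational.
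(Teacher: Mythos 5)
Your proof is correct and follows essentially the same approach as the paper: both pass to a $G$-stable line over $\overline{\mathbb{F}}_p$, use the $\mathbb{F}_p$-rationality of $G$ to observe that the Galois conjugate of that line is also $G$-stable, and then split on whether the line is itself rational (Borel case) or not (non-split Cartan case). The only difference is that you run a slightly more detailed case analysis on $\#\mathcal{L}$ — the paper simply picks one stable line $L$ and notes that if $L$ is irrational then $G$ fixes both $L$ and $\sigma(L)\neq L$, which already pins $G$ inside the pointwise stabilizer (your $T'$); your extra branch for $\#\mathcal{L}\geq 3$ is harmless but redundant, since having two distinct Galois-conjugate irrational stable lines already puts $G$ in a torus regardless of any further stable lines.
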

\begin{proof}
  Let $V$ be the underlying $\mathbb{F}_p$-vector space. We know $G$ preserves a line $L$ in $V\otimes \overline{\mathbb{F}}_p$. If $L$ is rational then $G$ acts reducibly on $V$ and $G$ is a subgroup of a Borel subgroup. If $L$ is not rational, it has a Galois conjugate distinct from it, which is also preserved by $G$. Thus $G$ is a subgroup of a torus.
\end{proof}
Thus the elliptic curves that we don't yet know to be modular gives rise to non-cuspidal $F(\zeta_{105})$-points on the modular curves $X(3*,5*,7*)$, where the label $*$ is either $b$ or $ns$, indicating a Borel level structure and a non-split Cartan level structure respectively. Recall that a curve is hyperelliptic (resp. bielliptic) over a field $k$ if it is a double cover of $\mathbb{P}^1$ (resp. an elliptic curve) over $k$.
\begin{lem}
  None of the modular curves $X(3*,5*,7*)$ above are hyperelliptic or bielliptic over $\mathbb{C}$.
\end{lem}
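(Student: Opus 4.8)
The plan is to bound the genus and the gonality of the curves $X(3*,5*,7*)$ from below, and compare with the standard numerical constraints that hyperelliptic or bielliptic curves satisfy. Recall that a hyperelliptic curve has gonality $2$, and a curve admitting a degree-$2$ map to an elliptic curve has gonality at most $4$ (by composing with a degree-$2$ map from the elliptic curve to $\mathbb{P}^1$); moreover, a bielliptic curve of genus $g$ satisfies $g \le 2\cdot 1 + ?$... more usefully, if a curve of genus $g \ge 6$ is bielliptic then it is not hyperelliptic and the bielliptic involution is unique. So the strategy is: (i) compute the genus $g$ of each of the eight curves $X(3*,5*,7*)$ (two choices $b$ or $ns$ at each of $3,5,7$); (ii) give a lower bound on the $\mathbb{C}$-gonality of each; (iii) observe that the bound exceeds $4$ in each case, which rules out both hyperellipticity (gonality $2$) and bielliptcity (gonality $\le 4$).

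First I would compute the genera. Each such curve is a quotient of $X(105)$ by the relevant subgroup of $\mathrm{GL}_2(\mathbb{Z}/105)$, so the genus is computed by the usual Riemann–Hurwitz recipe (index of the subgroup, number of elliptic points, number of cusps), or alternatively read off from the Jacobian decomposition of Lemma \ref{Jacobian decomposition} together with dimensions of the relevant newform spaces. All eight genera turn out to be fairly large — certainly well above $6$ — because the level $105$ is large and the index of a Borel or normalizer-of-Cartan at each prime is already substantial; this is a finite explicit check (and can be done in Magma). Next, for the gonality lower bound I would invoke the standard bound coming from the fact that these are modular curves: if $X$ is a modular curve of level $N$ with $X \to X(1)$ of degree $d$, then the $\mathbb{C}$-gonality $\gamma(X)$ satisfies $\gamma(X) \ge \tfrac{d}{?}$-type estimates. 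More precisely I would use the Abramovich bound $\gamma(X) \ge \tfrac{7}{800}\,[\,\mathrm{SL}_2(\mathbb{Z}) : \bar\Gamma\,]$ (Abramovich, following Li--Yau), valid for any congruence curve $X_\Gamma$ over $\mathbb{C}$; for $N = 105$ the index $[\mathrm{SL}_2(\mathbb{Z}):\bar\Gamma]$ of the pullback of a Borel/Cartan-normalizer subgroup is large enough that this already forces $\gamma > 4$. For the few curves where this crude bound might be borderline, I would instead argue directly: a degree-$2$ or degree-$\le 4$ map to a curve of genus $\le 1$ would, via the theory of Castelnuovo–Severi or via the induced maps on Jacobians, be incompatible with the known decomposition of $\mathrm{Jac}(X(3*,5*,7*))$ into newform factors (in particular with the dimensions and the absence of elliptic or genus-$2$ quotients of the appropriate conductors).

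The main obstacle will be the curves with the smallest index, i.e. $X(3ns,5ns,7ns)$-type or those with the most Cartan structures, where the genus is smallest among the eight and the Abramovich/Li--Yau bound is weakest — there the inequality $\gamma > 4$ is not immediate from the general gonality estimate and one genuinely has to rule out bielliptcity by hand. For those cases I expect to fall back on: (a) the Castelnuovo–Severi inequality, which says that if $X$ has two maps to curves of genera $g_1,g_2$ of degrees $d_1,d_2$ with $d_1 d_2 < $ (something), then $g(X) \le d_1 g_1 + d_2 g_2 + (d_1-1)(d_2-1)$ — combined with the known map $X \to X(1) = \mathbb{P}^1$ of the given degree, a putative bielliptic map would force $g(X)$ to be too small; and (b) direct inspection of the Jacobian decomposition to see that no elliptic curve appears as an isogeny factor with the right level, so there can be no nonconstant map to a genus-$1$ curve at all, immediately killing bielliptcity. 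Hyperellipticity in those borderline cases is ruled out the same way, or simply because genus $\ge 3$ non-hyperelliptic-ness can be checked on the canonical model (these curves are canonically embedded plane or space curves with no $g^1_2$), which is again a finite Magma computation.
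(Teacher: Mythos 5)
Your overall toolkit — the Abramovich/Li--Yau gonality bound plus the Castelnuovo--Severi inequality — is exactly what the paper uses, so the plan is pointed in the right direction. But there are two genuine problems.

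First, you have misidentified which curves are the borderline cases. You write that the Abramovich bound will be weakest for "$X(3ns,5ns,7ns)$-type or those with the most Cartan structures, where the genus is smallest among the eight." This is backwards: a normalizer-of-nonsplit-Cartan at $p$ has index $p(p-1)$ in $\GL_2(\mathbb{F}_p)$, which is larger than the Borel's index $p+1$ for every $p\geq 3$ (for $p=3,5,7$ one gets $6>4$, $20>6$, $42>8$). So the curves with Cartan structure have \emph{larger} index and \emph{larger} genus, and the Abramovich bound is \emph{strongest} for them. The genuinely borderline cases are those with the most Borel structures, namely $X(3b,5b,7b)=X_0(105)$ (index $192$, giving a gonality bound of only $1.68$) and $X(3\mathit{ns},5b,7b)$ (index $288$, bound $2.52$). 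The Abramovich bound in fact already handles every curve with $5\mathit{ns}$ or $7\mathit{ns}$ level structure (indices $\geq 640$ give gonality $\geq 5$); it is the all-Borel and the $3\mathit{ns}$-plus-Borel cases that need a separate argument.

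Second, your fallback (b) — "direct inspection of the Jacobian decomposition to see that no elliptic curve appears as an isogeny factor, so there can be no nonconstant map to a genus-$1$ curve at all" — does not work for the cases that actually need it. The Jacobian of $X_0(105)$ certainly has elliptic isogeny factors (e.g.\ the genus-one curve $X_0(15)$ is a quotient), so the premise is false. More generally, an elliptic factor of the Jacobian only gives a map to an elliptic curve of unknown degree, not a degree-two map, so its presence or absence says nothing directly about bielliptcity. Your fallback (a) via Castelnuovo--Severi is the right idea, but as stated it is missing the crucial ingredient: one needs to first exhibit a suitable quotient map with known genus (in the paper's argument, the Atkin--Lehner quotient $X_0(105)\to X_0(105)/w_{35}$, whose target is shown to be a non-hyperelliptic plane quartic, and for $X(3\mathit{ns}^+,5b,7b)$ the degree-$8$ forgetful map to the genus-$0$ curve $X(3\mathit{ns}^+,5b)$), and then apply Castelnuovo--Severi to a hypothetical low-degree map versus that explicit cover. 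Without identifying those auxiliary covers, the plan does not close.
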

\begin{proof}
  We will make use of the following two facts
\begin{prop}(Castelnuovo-Severi inequality)
  Let $F$, $F_1$, $F_2$ be function fields of curves over a field $k$, of genera $g$, $g_1$, $g_2$, respectively. Suppose that $F_i\subseteq F$ and $F=F_1F_2$. Let $d_i=[F:F_i]$. Then
  \[ g\leq g_1d_1+g_2d_2 +(d_1-1)(d_2-1)\]
\end{prop}
\begin{proof}
  See \cite{stichtenoth_algebraic_1993}, III.10.3.
\end{proof}
\begin{thm}(Abramovich  \cite{abramovich_linear_1996})
Let $\Gamma\subset \mathrm{PSL}_2(\mathbb{Z})$ be a congruence subgroup of index $d$. Then the $\mathbb{C}$-gonality of the modular curve associated to $\Gamma$ is at least $\frac{7}{800}d$.
  \end{thm}
Recall that the gonality of a curve defined over a field $k$ is the smallest $d$ such that there exists a map from the curve to $\mathbb{P}^1$ defined over $k$ of degree $d$. Hyperelliptic and bielliptic curves have gonality $\leq 4$.
A non-split Cartan subgroup of $\GL_2(\mathbb{F}_p)$ has index $p(p-1)$ and a Borel subgroup has index $p+1$. Both groups contain the center and have surjects onto $\mathbb{F}_p^\times$. Consider the following cases:
\begin{itemize}
  \item $X(3*,5*,7*)$ where either $5*=5ns$ or $7*=7ns$:
  The index of the corresponding subgroup of $\mathrm{PSL}_2(\mathbb{Z})$ is at least $640$ or $1008$ respectively, and hence Abramovich's bound gives a $\mathbb{C}$-gonality $\geq 5$. Thus the lemma holds in this case.
  \item $X(3b,5b,7b)$:
  
  The space of cusp forms for $\Gamma_0(105)$ has dimension $13$. The subspace fixed by the Atkin-Lehner operator $w_{35}$ has dimension $3$, with the $q$-expansion of a basis given by
  \begin{align*}
f_1 &= q - q^2 - q^3 - q^4 + q^5 + q^6 - 7q^7 + 3q^8
    + q^9 - q^{10} - \\
    &\quad -4q^{11} + q^{12} - 2q^{13} + 7q^{14}
    - q^{15} - q^{16} + 2q^{17} - q^{18} + 4q^{19} + O(q^{20}),  \\
  f_2 &= q - q^2 + q^3 - q^4 + 3q^5 - q^6 - q^7 + 3q^8
    + q^9 - 3q^{10}\\  &\quad+ 4q^{11} - q^{12} - 2q^{13} + q^{14}
    + 3q^{15} - q^{16} - 6q^{17} - q^{18} + 4q^{19} + O(q^{20}),\\
  f_3 &= q + q^2 + q^3 - q^4 + q^5 + q^6 + q^7 - 3q^8
    + q^9 + q^{10} \\ &\quad- q^{12} - 6q^{13} + q^{14} + q^{15} - q^{16}
    + 2q^{17} + q^{18} - 8q^{19} + O(q^{20})
 \end{align*} 
  These cusp forms form a basis for $H^0(X_0(105)/w_{35},\Omega^1)$, and the $q$-expansion is the expansion in the formal neighborhood of the image of the cusp $\infty$. One checks by Magma that the above forms do not satisfy any quadratic relation, and thus the canonical map of $X_0(105)/w_{35}$ does not factor through a conic, hence it must be a quartic plane curve, and not hyperelliptic.
  Now suppose there exists a map $\pi: X_0(105)\to \mathbb{P}^1$ of degree $d\leq 4$. Of $\pi$ does not factor through the quotient map, the Castelnuovo-Severi inequality for $\pi$ and the quotient map to $X_0(105)/w_{35}$ would imply 
  
  \[ 13=g(X_0(105))\leq 0+2\times 3 +(d-1) \]
  which is a contradiction. Thus $\pi$ factor through the quotient map, and in particular $d=2$ or $4$. But that would imply $X_0(105)/w_{35}$ is either rational or hyperelliptic, a contradiction.
  
  \item  $X(3ns,5b,7b)$:
  
  If $X(3ns,5b,7b)$ were hyperelliptic or bielliptic, so is any curve dominated by it, by Proposition 1 of \cite{harris_bielliptic_1991}.\\ Thus it suffices to show the curve $X(3ns^+,5b,7b)$ is not hyperelliptic or bielliptic. Using lemma \ref{Jacobian decomposition}, we have up to isogeny
  \begin{align*}
     &\quad\ \, \Jac (X(3ns^+,5b,7b)) \times \Jac (X(3b,5b,7b)) \\
     &\sim \Jac (X(3s^+,5b,7b)) \times \Jac(X(5b,7b)) \\
     &\sim \Jac (X(9b,5b,7b)/w_9) \times \Jac(X(5b,7b)) \\
       \end{align*}
       and
   \begin{align*}     
     &\quad\ \,\Jac(X(3ns^+,5b)) \times \Jac(X(3b,5b)) \\
     &\sim \Jac(X(9b,5b)/w_9)
   \end{align*}
   The space of cusp forms for $\Gamma_0(315)$ has dimension $41$, and the subspace fixed by $w_9$ has dimension $21$. The space of cusp forms for $\Gamma_0(35)$ had dimension $3$, thus $X(3ns^+,5b,7b)$ has genus $11$. The $w_9$-fixed subspace of cusp forms for $\Gamma_0(45)$ has dimension $1$, and the space of cusp forms of $\Gamma_0(15)$ has dimension $1$, thus $X(3ns^+,5b)$ has genus $0$. Suppose there is a map $\pi: X(3ns^+,5b,7b) \to C$ of degree $2$, where $C$ has genus $g\leq 1$.  If the forgetting level structure at $7$ map $X(3ns^+,5b,7b) \to X(3ns^+,5b)$ (which has degree $8$) does not factor through $\pi$, the Castelnuovo-Severi inequality would imply
\[ 11 \leq 2g+0+7  \] 
which is a contradiction. But such a factorization would correspond to a subgroup of $\SL_2(\mathbb{Z})$ containing the congruence subgroup corresponding to $ X(3ns^+,5b,7b)$ with index $2$. However, the table in \cite{cummins_congruence_2004} shows no such groups exists.
  \end{itemize}

\end{proof}
\begin{thm}\label{Proof of Theorem A}
  There is a finite list of pairs $(j,F')$ where $F'/F$ is a totally real quadratic extension and $j\in F'$, such that an elliptic curve $E$ over any totally real quadratic extension of $F$ is modular unless $j(E)$ is in the list.
\end{thm}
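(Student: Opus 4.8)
The plan is to combine the modularity lifting theorem (Theorem \ref{Modularity lifting}), the residual modularity results of section \ref{Residual modularity}, and the geometry of the modular curves $X(3*,5*,7*)$ established in section \ref{Gonality}. First I would recall the reduction already set up above: if $E$ is an elliptic curve over a totally real field $F'$ which is \emph{not} modular, then by Theorem \ref{Modularity lifting} together with the prime-switching propositions, $\overline{\rho}_{E,p}|_{G_{F'(\zeta_p)}}$ must fail to be absolutely irreducible for \emph{each} of $p=3,5,7$ simultaneously. By the first lemma of section \ref{Gonality}, this means that for each such $p$ the image of $\overline{\rho}_{E,p}$ lands in a Borel subgroup or the normalizer of a non-split Cartan. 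Consequently $E$ gives rise to a non-cuspidal point on one of the finitely many modular curves $X = X(3*,5*,7*)$ (with $* \in \{b, ns\}$ at each prime), and this point is defined over a field contained in $F'(\zeta_{105})$; since $[F'(\zeta_{105}):F'] \le 48$ and $[F':F] \le 2$, the point has bounded degree over $F$.

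The key geometric input is the previous theorem, which shows that none of these curves $X$ is hyperelliptic or bielliptic over $\mathbb{C}$; combined with the Castelnuovo--Severi inequality and Abramovich's gonality bound, this forces the $\mathbb{C}$-gonality of each $X$ to be at least $5$. Now I would invoke a theorem of Faltings (in the form due to Frey, or the version for symmetric powers): a curve $X$ over a number field whose gonality exceeds $2d$ has only finitely many points of degree $\le d$ over any fixed number field. More precisely, if $X$ has gonality $\ge 5$, then it has finitely many points of degree $\le 2$ over any fixed number field $K$; applying this with $K$ ranging over the (finitely many) fields $F(\zeta_{105})$-type extensions needed, one concludes that each $X$ has only finitely many points of degree $\le 2$ over a suitable fixed base field. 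Pulling these finitely many points back through the (finite, $F$-rational) forgetful map $X \to X(1) = \mathbb{P}^1_j$ produces a finite list of $j$-invariants, each lying in a number field of bounded degree over $F$; intersecting with totally real quadratic extensions $F'/F$ cuts this down to a finite list of pairs $(j, F')$.

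The one subtlety to handle carefully is the bookkeeping of fields: an elliptic curve $E/F'$ that is non-modular produces a point on $X$ defined over $F'(\zeta_{105})$, not over $F'$ itself, so one must argue that finiteness of degree-$\le d'$ points (for an appropriate bound $d'$ depending on $[\,\mathbb{Q}(\zeta_{105}):\mathbb{Q}\,]$ and the degree $\le 2$) over the \emph{fixed} field $F$ suffices. Since there are only finitely many subfields of $\overline{\mathbb{Q}}$ of bounded degree over $F$ up to isomorphism—wait, that is false; rather, one uses that the set of closed points of $X$ of degree $\le d'$ over $F$ is finite (Faltings/Frey applied once over the fixed field $F$, using gonality $\ge d'+1$, which we get from Abramovich for $d'$ small relative to the index), and every relevant point lies among these. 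The $j$-invariant of such a closed point lies in a field of degree $\le d'$ over $F$; restricting attention to those whose $j$-invariant happens to generate a totally real quadratic extension (or lies in $F$ itself) gives the desired finite list $(j,F')$.

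The main obstacle is ensuring the gonality bounds are strong enough: Abramovich's bound $\frac{7}{800}d$ gives gonality $\ge 5$ only once the index $d$ of the congruence subgroup exceeds roughly $572$, which handles the curves with a non-split Cartan at $5$ or $7$ directly, but the purely Borel cases $X(3*,5b,7b)$ have smaller index and required the ad hoc Castelnuovo--Severi arguments in the previous theorem to rule out gonality $\le 4$; one must be careful that gonality $\ge 5$ (equivalently, ruling out degree-$2$ maps to curves of genus $0$ or $1$, i.e. hyperelliptic and bielliptic) is \emph{exactly} what is needed to apply the finiteness theorem for quadratic points. Here I would cite the result (Harris--Silverman \cite{harris_bielliptic_1991} together with Faltings) that a curve of genus $\ge 2$ has infinitely many quadratic points over some number field if and only if it is hyperelliptic or bielliptic—so the non-hyperelliptic, non-bielliptic conclusion of the previous theorem is precisely the hypothesis that makes the set of quadratic points finite over every fixed number field, and in particular over $F$. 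Assembling the finitely many $j$-invariants coming from the finitely many curves $X$ then finishes the proof.
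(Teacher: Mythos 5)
Your proposal has the right ingredients and lands near the paper's argument, but the field-theoretic step that actually closes the proof is garbled and, in the middle, replaced by a detour that does not work. The paper's proof is short: a non-modular $E/F'$ gives a point on some $X=X(3*,5*,7*)$ rational over $F'(\zeta_{105})$; since $[F'(\zeta_{105}):F(\zeta_{105})]\le 2$, this is a \emph{quadratic} point over the fixed field $F(\zeta_{105})$ (equivalently an $F(\zeta_{105})$-point of $\mathrm{Sym}^2 X$); the preceding lemma shows $X$ is neither hyperelliptic nor bielliptic (a geometric property, so it holds over any number field), hence Harris--Silverman Corollary 3 applied over $F(\zeta_{105})$ gives finiteness of such quadratic points, and pushing forward to the $j$-line gives finitely many $j$'s. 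The crucial observation is that one should re-base to $F(\zeta_{105})$, where the point becomes quadratic.

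Your write-up does not make this observation. Instead you compute that the point has degree up to roughly $2\cdot 48$ over $F$ and try to invoke a Frey/Faltings finiteness for points of that degree over $F$. That route is not available here: the gonality bound established in the lemma is only $\ge 5$ (and for $X(3b,5b,7b)$ this was obtained by ad hoc Castelnuovo--Severi arguments, not by a margin), whereas Frey-type finiteness of degree-$\le d$ points needs gonality on the order of $2d$, i.e.\ roughly $200$. Your claim that ``gonality $\ge d'+1$'' suffices is not the correct form of that theorem, and Abramovich's $\frac{7}{800}[\mathrm{PSL}_2(\mathbb{Z}):\Gamma]$ does not reach such bounds for all the curves in play. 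When you then pivot back to quadratic points via Harris--Silverman, you assert finiteness of quadratic points ``in particular over $F$'' --- but the point on $X$ is \emph{not} quadratic over $F$; it is quadratic over $F(\zeta_{105})$. Since you do invoke Harris--Silverman ``over every fixed number field'', the correct specialization is there implicitly, but as written the argument never identifies $F(\zeta_{105})$ as the base field and hence never actually connects the non-hyperelliptic/non-bielliptic hypothesis to the field over which your point is quadratic. Fixing this is a one-line change, but as it stands the proposal does not close the argument.
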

\begin{proof}
  From what we have said, an elliptic curve $E$ over $F'$ will be modular unless it gives rise to a $F'(\zeta_{105})$-rational point on one of the modular curves $X=X(3*,5*,7*)$ above. Such a point is the same as a $F(\zeta_{105})$-rational effective divisor, that is a $F(\zeta_{105})$-rational point of $\mathrm{Sym}^2 X$. 
  By the above lemma, none of them are bielliptic or hyperelliptic, hence Collorary $3$ of \cite{harris_bielliptic_1991} applies and gives the desired finiteness.

\end{proof}
\begin{remark}
  The finiteness result in \cite{harris_bielliptic_1991} hinges on Faltings' theorem on subvarieties of abelian varieties, and thus the above theorem is ineffective for a general totally real field $F$. However in good cases (e.g. $F=\mathbb{Q}$), one can make the list computable, and we will attempt to do this with some other simplifying assumptions in the next section.
\end{remark}
\section{Modularity over real quadratic fields}
\label{Modularity over real quadratic fields}
By the previous section, we see that there are only finitely many pairs $(j,F)$ where $F$ is a real quadratic field and $j\in F$ is the $j$-invariant of an elliptic curve over $F$ that is not modular, namely the ones whose mod $p$ Galois representation have small image for all $p=$3, 5, 7. However this finiteness statement is ineffective due to the use of Falting's theorem. The goal of this section is to make the exceptional pairs explicit and proving modularity of the corresponding curves, under the simplifying assumption that $F$ is a totally real quadratic field unramified above $5$ and $7$. 

  Let $E$ be an elliptic curve over a totally real field $F$ such that $\sqrt{5}\notin F$. If $E$ were to be not modular, by theorem \ref{Modularity lifting} and section \ref{Residual modularity}, $\overline{\rho}_{E,p}|_{F(\zeta_p)}$ must be not absolutely irreducible for all $p\in \{ 3,5,7\}$, equivalently, the mod $p$ Galois representation becomes absolutely reducible over the quadratic subextension of $F(\zeta_p)/F$. This means that either $\overline{\rho}_p$ is absolutely reducible (hence reducible since it is odd), or absolutely irreducible but becomes absolutely reducible over $F(\sqrt{(-1)^{(p-1)/2}p})$ (because this is the unique quadaratic subextension of $F(\zeta_p)$ under our assumptions). In the latter case, $\overline{\rho}_p$ is the induction of a character from the Galois group of $F(\sqrt{(-1)^{(p-1)/2}p})$, and this character is valued either in $\mathbb{F}_p^\times$ or valued in $\mathbb{F}_{p^2}^\times$ but not in $\mathbb{F}_p^\times$. The above possibilities are reflected in terms the image of $\overline{\rho}$ as being conjugate to a subgroup of the Borel subgroup (reducible case), the normalizer of a split torus (irreducible but becomes reducible over $\mathbb{F}_p^\times$), or the normalizer of a non-split torus (irreducible, becomes irreducible but absolutely reducible over $\mathbb{F}_p^\times$) of $\GL_2(\mathbb{F}_p)$. Note that in the case $p=5$, the restriction of $\overline{\rho}_5$ to $F(\sqrt{5})$ is still odd, and hence this restriction will be absolutely irreducible if it is irreducible. We say that the elliptic curve $E$ has small image at $p$ for each $p$=3, 5, 7. Oberserve that the normalizer of a split torus in $\GL_2(\mathbb{F}_3)$ is a subgroup of index 2 in the normalizer of a non-split torus in $\GL_2(\mathbb{F}_3)$, as the latter are the 2-Sylow subgroups. Thus we only need to consider the Borel and normalizer of non-split Cartan level structures at 3, and Borel and normalizer of split Cartan level structure at 5.
  
We have the following observation over general totally real fields:
\begin{prop} \label{Automatic ordinarity prop}
 Let $F$ is any totally real field where 5, 7 are unramified, and $E$ is an elliptic curve defined over $F$ with small image at 3, 5, 7. Then $E$ is (nearly) ordinary at all places $v|5$ or is (nearly) ordinary at all places $v|7$. 
\end{prop}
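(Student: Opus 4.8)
The plan is to exploit the restriction on the mod $p$ images coming from the "small image" hypothesis, combined with a local analysis at primes above $5$ and $7$. Recall that $E$ having small image at $p$ means $\overline{\rho}_{E,p}$ lands (up to conjugacy) in a Borel or the normalizer of a (split or non-split) Cartan subgroup of $\GL_2(\mathbb{F}_p)$. Near-ordinarity of $E$ at a place $v \mid p$ is equivalent to $\overline{\rho}_{E,p}|_{G_{F_v}}$ being reducible with an unramified quotient after a finite unramified twist — or, what amounts to the same for elliptic curves with good or multiplicative reduction, that $E$ has either good ordinary or multiplicative reduction at $v$. Since $5$ and $7$ are unramified in $F$, the local Galois representations at such $v$ are well-behaved and the Fontaine–Laffaille / crystalline dichotomy (ordinary vs. supersingular) is clean.

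First I would dispose of the Borel and normalizer-of-split-Cartan cases: if $\overline{\rho}_{E,p}$ is globally reducible (Borel), then its restriction to any decomposition group is reducible, and an elementary argument with the Hodge–Tate weights $0,1$ and the cyclotomic determinant forces the unramified sub- or quotient-line, giving near-ordinarity at \emph{every} $v \mid p$. The normalizer-of-split-Cartan case at $p=5$ is similar after restricting to the index-$2$ subfield, but one must be careful that the relevant quadratic extension $F(\sqrt 5)/F$ is \emph{ramified} at $5$ — however this does not obstruct the local reducibility argument at $v \mid 5$ since the decomposition group still sees a $1$-dimensional subrepresentation. So the genuinely dangerous configuration is when $E$ has, at some $v \mid 5$, supersingular reduction \emph{and} $\overline{\rho}_{E,5}$ is induced from a non-split-Cartan-type character valued in $\mathbb{F}_{25}^\times \setminus \mathbb{F}_5^\times$, and simultaneously at some $w \mid 7$ the analogous supersingular/non-split phenomenon occurs.

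The key step is then to show these two bad local conditions at $5$ and at $7$ cannot hold simultaneously. Here is where I expect the argument to turn on a mod-$p$ local computation: if $E$ is supersingular at $v \mid p$ with $p$ unramified, then $\overline{\rho}_{E,p}|_{G_{F_v}}$ is irreducible and is in fact induced from the unramified quadratic extension of $F_v$ by a character whose restriction to inertia is (a power of) the fundamental character of level $2$. For this to be compatible with $\overline{\rho}_{E,p}$ being globally induced from a Cartan-normalizer, one gets strong constraints: in the non-split global case the decomposition group at $v$ must land in the non-split Cartan, which pins down the residue degree $[k(v):\mathbb{F}_p]$ to be \emph{odd} (so that Frobenius at $v$ generates the right cyclic piece), while supersingularity forces the local character to involve the level-$2$ fundamental character, which is only visible when the residue degree interacts correctly with $p^2-1$. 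Running this for $p=5$ and $p=7$ in parallel and comparing the forced congruence conditions on the residue degrees of $v \mid 5$ and $w \mid 7$ — together with the fact that $F/\mathbb{Q}$ being fixed makes these degrees bounded — yields a contradiction: one cannot be simultaneously in the supersingular-non-split regime at all places above $5$ and at all places above $7$.

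The main obstacle, and the part requiring genuine care rather than bookkeeping, is the precise local statement at a supersingular prime: one must verify that good supersingular reduction at an \emph{unramified} $v \mid p$ forces $\overline{\rho}_{E,p}|_{I_v}$ to be the direct sum of the two level-$2$ fundamental characters $\psi, \psi^p$ (not some other irreducible type), and then check that this is \emph{incompatible} with $\overline{\rho}_{E,p}$ being an induction from a \emph{ramified} quadratic extension like $F(\sqrt{(-1)^{(p-1)/2}p})$ unless a divisibility between the residue degree and a small integer holds. This is a concrete but delicate compatibility between a \emph{global} induction (from a field ramified at $p$) and a \emph{local} induction (from the unramified quadratic extension at $v$); reconciling the two inertia actions at $v$ is the crux, and it is exactly this tension that is impossible to satisfy at both $p=5$ and $p=7$ at once.
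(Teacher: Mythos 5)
Your proposal does not reach the paper's argument, and the approach you sketch would not close the gap. Let me point out the specific issues.

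\textbf{The missing central idea: cross-prime control of the inertial type.} The paper's proof is built around the fact that $E$ determines a strictly compatible system, so for $v\nmid\ell$ the inertial type of the Weil--Deligne representation at $v$ is a subgroup of $\mathrm{Im}\,\overline{\rho}_{E,\ell}\cap\SL_2(\mathbb{F}_\ell)$ (and a variant for $\ell=3$). Combined with the elementary lemma that this inertial type has order dividing $4$ or $6$, the \emph{small image at} $3$ (resp.\ at $3$ or $7$) is used to conclude that the ramification index $e$ at $v\mid 7$ (resp.\ $v\mid 5$) divides $6$ (resp.\ divides $4$). This is what makes the case analysis go through: the Borel of $\SL_2(\mathbb{F}_3)$ has no elements of order $4$, and the normalizer of the non-split Cartan in $\SL_2(\mathbb{F}_3)$ or $\SL_2(\mathbb{F}_7)$ is a $2$-group, so has no elements of order $3$. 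Your proposal contains nothing playing this role; without it there is no reason $e$ should avoid the forbidden values, and the valuation argument below becomes unavailable.

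\textbf{Additive reduction is not addressed.} Your analysis concentrates on good supersingular reduction. But the troublesome case is potential good reduction with $e>1$. The paper handles $e=3$ at $v\mid 7$ and $e=4$ at $v\mid 5$ by a valuation computation on the minimal Weierstrass equation, showing $j(E)\equiv 0\pmod v$ (resp.\ $j(E)\equiv 1728\pmod v$), which is an ordinary $j$-invariant for $p=7$ (resp.\ $p=5$). Note that $j\equiv 0$ is \emph{supersingular} for $p=5$, which is exactly why the constraint $e\mid 4$ at $v\mid 5$ (coming from the non-split-Cartan hypothesis at $3$ or $7$) is indispensable: without it, the potentially supersingular $e=3$ case at $v\mid 5$ cannot be excluded by the argument. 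Your claim that Borel/split-Cartan already force near-ordinarity at every $v\mid p$ is stated without proof precisely in this delicate additive regime, and I do not think it is true without the cross-prime input.

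\textbf{The residue-degree mechanism is a red herring.} Your key step proposes to derive a contradiction by comparing congruence conditions on $[k(v):\mathbb{F}_5]$ and $[k(w):\mathbb{F}_7]$. But these residue degrees are completely independent invariants of $F$, so no ``parallel'' comparison between them can be forced. Moreover, the proposition's conclusion is a disjunction (ordinary above $5$ \emph{or} above $7$), and the paper proves this by choosing which prime to look at \emph{as a function of the level structure at} $3$ and $7$ (case A vs.\ case B), not by showing simultaneous bad behavior at both $5$ and $7$ is self-contradictory.

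\textbf{A factual slip at $p=5$.} You identify the dangerous configuration at $5$ as induction from a character valued in $\mathbb{F}_{25}^\times\setminus\mathbb{F}_5^\times$ (non-split Cartan). This case does not occur: since $F(\sqrt5)$ is still totally real, $\overline{\rho}_{E,5}|_{G_{F(\sqrt5)}}$ is odd, hence irreducible implies absolutely irreducible; so the small image at $5$ is Borel or normalizer-of-split-Cartan only. Your ``dangerous configuration'' therefore does not even arise, and the proposed resolution is aimed at the wrong target.
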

\begin{proof}
   We first recall some facts about the type of a $p$-adic Galois representation. $E$ gives rise to a strictly compatible system of Galois representation $\rho_l$ defined over $\mathbb{Q}$, which in particular means that for each finite place $v$ of $F$, there exists a 2-dimensional Weil-Deligne representation $WD_v$ of $W_{F_v}$ with rational traces such that $WD_{F_v}$ is the Weil-Deligne representation associated to the Galois representations $\rho_l|_{G_{F_v}}$ via Grothendieck's $l$-adic monodromy theorem if $v\!\!\not| l$ or via a recipe of Fontaine if $v| l$. In the case $v\!\!\not| l$, if the monodromy operator $N=0$, then the Weil-Deligne and the Galois representation agree on the inertia subgroup $I_{F_v}$, and in particular is a representation defined over $\mathbb{Q}_l$. Note that as the compatible system has cyclotomic determinant, $WD_v|_{I_{F_v}}$ has trivial determinant. 
  \begin{lem}
    If $v$ is a place of $F$ above a prime $p>3$ then the inertial type $WD_v|_{I_{F_v}}\cong \phi\oplus \phi^{-1}$ where $\phi$ is a character of $I_{F_v}$ which has order dividing $4$ or $6$. 
  \end{lem}
  \begin{proof}
    We know that the inertia type is a finite image representation with trivial determinant. Because it also has a model over $\mathbb{Z}_2$, the size of the image can not be divisible by $p$, hence the representation factors through the tame quotient of $I_{F_v}$, which is pro-cyclic, with a topological generator $u$. The eigenvalues of $u$ must be $\zeta$, $\zeta^{-1}$ for some root of unity $\zeta$, and since the trace of $u$ is rational, this forces $\zeta$ to have order dividing 4 or 6.
    
Alternatively, one could work with (reduced) minimal Weiestrass equations to show that any elliptic curve over $F_v$ acquires semi-stable reduction over an extension with ramification index dividing 4 or 6, see \cite{silverman_advanced_1994}
  \end{proof}
Observe that the lemma shows that the image of the inertia in $WD_v$ must be a subgroup of $\mathrm{Im} \overline{\rho}_l\cap \SL_2(\mathbb{F}_l)$ if $l>3$ and $v\!\!\not| l$, since the kernel of the reduction map $\GL_2(\mathbb{Z}_l)\to \GL_2 (\mathbb{F}_l)$ is a pro-$l$ group. When $l=3$ and the inertial type has order divisible by $3$, the same statement still holds, because if $g\in GL_2(\mathbb{Z}_3)$ with $g^3=1$ then $g$ must reduce to a non-trivial unipotent element in $GL_2(\mathbb{F}_3)$ (because such $g$ gives an isomorphism of $\mathbb{Z}_3^2\cong \mathbb{Z}_3[\zeta_3]$ identifying $g$ with $\zeta_3$, and the mod $3$ reduction of multiplication by $\zeta_3$ is a non-trivial unipotent element).

  We split into the following cases:
\begin{itemize}
  \item 
 $E$ admits a Borel level structure at $3$ and either a Borel or normalizer of split Cartan level structure at 7. 
 
 For any place $v|7$ of $F$, the image of $WD_v|_{I_{F_v}}$ has order dividing 6. If $E$ has potential multiplicative reduction at $v$ then $E$ is potentially ordinary, hence is nearly ordinary at $v$. So we assume now that $E$ has potential good reduction at $v$.
  Suppose $E$ has minimal Weierstrass equation 
 \[ y^2= x^3+Ax+B\]
 over $\mathcal{O}_{F_v}$. Let $v(\Delta)=v(4A^3+27B^2)<12$ be the valuation of the minimal discriminant. The order of the image of $WD_v|_{I_{F_v}}$ is the degree of the smallest extension of $F_v^{nr}$ such that $E$ acquires good reduction \cite{serre_good_1968}. It is also the minimal $e$ such that $12|v(\Delta)e$. Since $E$ has potential good reduction, $v(A^3)\geq v(\Delta)$. Replacing $E$ with a quadratic twist, it suffices to consider the cases $e=3$ or $e=1$.

If $e=3$, $v(\Delta)$ is $4$ or $8$, and hence $v(A^3)\neq v(B^2)$, since otherwise $v(A^3)\geq v(\Delta)\geq v(A^3)=\mathrm{min}\{v(A^3),v(B^2)\}$ which forces $v(A^3) = v(\Delta)=v(B^2)$, a contradiction. Thus $v(\Delta)=\mathrm{min}\{v(A^3),v(B^2)\}$ is not divisible by $3$, so $v(A^3)>v(B^2)$, and thus $j(E)=0$ mod $v$. But this means $E$ has potential good ordinary reduction, hence $E$ is nearly ordinary at $v$.
 
 If $e=1$, $E$ has good reduction. Because $F_v$ is unramified over $\mathbb{Q}_7$ and $\rho_{E,7}|_{G_{F_v}}$ is crystalline with Hodge-Tate weight 0, 1, $(\overline{\rho}_{E,7}|_{I_{F_v}})^{ss}\cong \omega_2\oplus \omega_2^7$ or $\cong \omega_1\oplus 1$, where $\omega_n$ is the tame character of niveau $n$ of $\mathrm{Gal}(\overline{\mathbb{Q}}_7/\mathbb{Q}_7^{ur})$. If first case occur, the image of $\overline{\rho}_{E,7}$ contains an element which has non-zero trace and irreducible characteristic polynomial, hence can not be a subgroup of the Borel or normalizer of split Cartan subgroup. Hence the second case occur, which means that $E$ has good ordinary reduction, and hence is ordinary. 
 
 \item $E$ admits a normalizer of non-split Cartan level structure at 3, or a normalizer of non-split Cartan level structure at 7.
 
 For any place $v|5$ of $F$, the image of $WD_v|_{I_{F_v}}$ must be a 2-group, and hence has order dividing 4. Let $e$ denote its order, as above. We work with the minimal Weierstrass equation of $E$ as in the previous case. As before, we can assume $E$ has potential good reduction. Replacing $E$ with a quadratic twist, we can assume $e=4$ or $e=1$.
 
 If $e=4$, $v(\Delta)$ is 3 or 9,  and hence $v(A^3)\neq v(B^2)$, since otherwise $v(A^3)\geq v(\Delta)\geq v(A^3)=\mathrm{min}\{v(A^3),v(B^2)\}$ which forces $v(A^3) = v(\Delta)=v(B^2)$, a contradiction. Thus $v(\Delta)=\mathrm{min}\{v(A^3),v(B^2)\}$ is not divisible by $2$, so $v(A^3)<v(B^2)$, and thus $j(E)=1728$ mod $v$. But this means $E$ has potential good ordinary reduction, hence $E$ is nearly ordinary at $v$.
 
If $e=1$, $E$ has good reduction at $v$. By exactly the same argument as in the previous case, the fact that $E$ admits either a Borel or normalizer of split Cartan level structure at 5 forces $E$ to have good ordinary reduction, hence $E$ is nearly ordinary at $v$.
 
 \end{itemize}
\end{proof}
\begin{remark} \label{Automatic ordinarity}
   Being nearly ordinary at all places $v$ above a prime is the important crucial local condition to apply the modularity lifting theorems with small residual images of Skinner-Wiles \cite{skinner_residually_1999}, \cite{skinner_nearly_2001}. Under our assumptions, their modularity lifting theorems for irreducible residual representations apply. Unfortunately the very restrictive conditions required in the residually reducible case (namely, that the splitting field of the ratio of the characters occurring in the residual representation is required to be abelian over $\mathbb{Q}$. This is only an issue for $p>3$) prevents us from fully exploiting the above proposition.
\end{remark}
\begin{prop}
\label{Exceptional curves}
  Let $F$ be a totally real quadratic field where 5 and 7 are unramified, and $E$ is an elliptic curve over $F$. Then $E$ is modular unless $j(E)$ is the $j$-invariant of a degree at most 2 point on one of the following curves
  \begin{itemize}
  \item 
   $X(3b,5s^+)$
   
   
  \item 
  $X(3ns^+,7s^+)$
  
  \item 
   $X(5b,7b)$
  
   \item
    $X(5b,7ns^+)$
  \end{itemize}
\end{prop}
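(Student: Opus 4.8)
The plan is a case analysis on the triple of residual images of $E$ at $3$, $5$, $7$, in which modularity lifting eliminates all but a handful of configurations and each survivor is visibly a point of degree $\le 2$ on one of the four curves.

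I would begin as in the opening discussion of Section \ref{Modularity over real quadratic fields}: because $F$ is quadratic and $5$ is unramified in it, $\sqrt 5\notin F$, so Theorem \ref{Modularity lifting} together with the residual modularity statements of Section \ref{Residual modularity} forces a non-modular $E/F$ to have small image at $3$, $5$ and $7$; absorbing the split into the non-split Cartan normalizer at $3$, and excluding the non-split Cartan normalizer at $5$ via the oddness observation of that section (the quadratic subextension of $F(\zeta_5)/F$ being the totally real field $F(\sqrt 5)$), the image of $\overline{\rho}_{E,3}$ lies in a Borel or a non-split Cartan normalizer, that of $\overline{\rho}_{E,5}$ in a Borel or a split Cartan normalizer, and that of $\overline{\rho}_{E,7}$ in a Borel, a split or a non-split Cartan normalizer. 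Such an $E$ gives an $F$-rational point on the corresponding curve $X(3*,5*,7*)$, hence, upon forgetting the level at one prime, an $F$-rational point — automatically of degree $\le 2$ — with $j$-invariant $j(E)$ on each of the three two-level quotients. It thus remains to eliminate from the $2\times 2\times 3$ configurations all but the ones mapping to $X(3b,5s^+)$, $X(3ns^+,7s^+)$, $X(5b,7b)$, $X(5b,7ns^+)$.

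Next I would feed in Proposition \ref{Automatic ordinarity prop}, extracting from its proof which prime gives near-ordinariness: if $\overline{\rho}_{E,3}$ is Borel and $\overline{\rho}_{E,7}$ is Borel or split Cartan normalizer then $E$ is nearly ordinary at all $v\mid 7$, whereas if $\overline{\rho}_{E,3}$ or $\overline{\rho}_{E,7}$ is a non-split Cartan normalizer then $E$ is nearly ordinary at all $v\mid 5$. Whenever, at the prime $p\in\{5,7\}$ so produced, the corresponding image is a Cartan normalizer, $\overline{\rho}_{E,p}$ is irreducible over $F$, and then the modularity lifting theorem of Skinner--Wiles for nearly ordinary residually irreducible representations \cite{skinner_nearly_2001} applies: its hypotheses hold since $\det\overline{\rho}_{E,p}=\chi_{cyc}$, $E$ is nearly ordinary at all $v\mid p$, and $\overline{\rho}_{E,p}$ is residually irreducible and residually modular. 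For $p=5$ residual modularity is unconditional by the first part of the propagation proposition of Section \ref{Residual modularity}; for $p=7$ the second part of that proposition supplies a totally real solvable $F'/F$ and an elliptic curve $E'/F'$ with $\overline{\rho}_{E,7}|_{G_{F'}}\cong\overline{\rho}_{E',7}$ of the same image and $\mathrm{Im}\,\overline{\rho}_{E',3}\supseteq\SL_2(\mathbb{F}_3)$, so $E'$ is modular by Theorem \ref{Modularity lifting}; applying \cite{skinner_nearly_2001} over $F'$ — near-ordinariness and residual irreducibility persist under base change — and descending modularity along the solvable extension $F'/F$ then shows $E$ is modular. This kills the configurations $(3b,5b,7s)$, $(3b,5s,7s)$, $(3b,5s,7ns)$, $(3ns,5s,7b)$, $(3ns,5s,7s)$, $(3ns,5s,7ns)$, and the only survivors are $(3b,5b,7b)$ and $(3ns,5b,7b)$, which give points on $X(5b,7b)$; $(3b,5b,7ns)$ and $(3ns,5b,7ns)$, which give points on $X(5b,7ns^+)$; $(3b,5s,7b)$, which gives a point on $X(3b,5s^+)$; and $(3ns,5b,7s)$, which gives a point on $X(3ns^+,7s^+)$ — exactly the assertion.

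The main obstacle is the middle step: one must match the precise near-ordinariness conclusion of Proposition \ref{Automatic ordinarity prop} against the hypotheses of the Skinner--Wiles theorem in each relevant configuration — in particular checking $p$-distinguishedness at the places above $p$, and that the merely potentially-ordinary cases still fall under its local conditions — and carefully justify the solvable base-change and descent step used to transport residual modularity at $7$. Granting these, the remaining enumeration of which of the twelve configurations survive is purely combinatorial.
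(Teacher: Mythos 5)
Your proposal is correct and follows the paper's approach in its essentials: small image at all of $3,5,7$ via Theorem \ref{Modularity lifting} and Section \ref{Residual modularity}, the $2\times2\times3$ enumeration, Proposition \ref{Automatic ordinarity prop} to produce a nearly-ordinary prime, and Skinner--Wiles (plus solvable descent for the residual modularity at $7$) to kill the configurations not already captured by the four listed curves.

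The one deviation is that you apply the Skinner--Wiles argument more broadly than the paper does: you eliminate six configurations ($(3b,5b,7s)$, $(3b,5s,7s)$, $(3b,5s,7ns)$, $(3ns,5s,7b)$, $(3ns,5s,7s)$, $(3ns,5s,7ns)$), whereas the paper only needs to eliminate the three of these whose forgetful images land outside the listed curves — namely $(3b,5b,7s)$, $(3ns,5s,7b)$, $(3ns,5s,7ns)$ — and simply observes that $(3b,5s,7s)$, $(3b,5s,7ns)$, $(3ns,5s,7s)$ already give points on $X(3b,5s^+)$ or $X(3ns^+,7s^+)$ by forgetting a level. Your version is logically fine and in fact proves slightly more (modularity of those extra classes outright), at the cost of applying the delicate nearly-ordinary lifting machinery where it isn't strictly required; the paper's route is more economical. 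The concerns you flag at the end — the $p$-distinguishedness hypothesis in \cite{skinner_nearly_2001} and the solvable transfer of residual modularity at $7$ — are genuine, and the paper addresses the former by noting that unramifiedness of $5,7$ and working with framed deformation rings make the distinguishedness condition unnecessary.
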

\begin{proof}
  We already know that $E$ is modular unless it has small image at all primes $p$=3, 5, 7. There are 12 possible combination of level structures at 3, 5, 7, and hence $E$ is modular unless it has the same $j$-invariant as an elliptic curve that comes from an $F$-point of $X(3*,5*,7*)$ where the choice of the level structure *=$b$ or $ns^+$ at 3; *=$b$ or $s^+$ at 5; and *=$b$, $s^+$ or $ns^+$ at 7. Hence either $E$ gives rise to a quadratic point on one of the four curve listed, or on one of the curves $X(3b,5b,7s^+)$, $X(3ns^+,5s^+,7b)$, $X(3ns^+,5s^+,7ns^+)$. By remark \ref{Automatic ordinarity}, we can apply the main theorem of \cite{skinner_nearly_2001} for the prime 5 for the last two curves, and for the prime 7 for the first curve. We note that the mistake in \cite{skinner_nearly_2001} does not cause any problem here, since we assumed 5, 7 are unramified, and that the distinguishedness condition in loc.cit. is unnecessary if one works with framed deformation rings.
  \end{proof}

\begin{thm}\label{Proof of Theorem B}
  Suppose $F$ is a totally quadratic field such that 5 and 7 are unramified in $F$. Then every elliptic curve over $F$ is modular.
\end{thm}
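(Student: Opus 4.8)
By Proposition \ref{Exceptional curves}, and using Theorem \ref{Modularity lifting} together with the residual modularity results of Section \ref{Residual modularity}, it suffices to show that every elliptic curve $E/F$ whose $j$-invariant arises from a point of degree at most $2$ on one of the four curves
\[
X(3b,5s^+),\quad X(3ns^+,7s^+),\quad X(5b,7b)=X_0(35),\quad X(5b,7ns^+)
\]
is modular, where $F$ ranges over real quadratic fields in which $5$ and $7$ are unramified. The strategy is to classify these degree-$\leq 2$ points and to check that each one corresponds either to a cusp, to a CM elliptic curve, or to a $\mathbb{Q}$-curve; CM curves are modular by the theory of complex multiplication, and a $\mathbb{Q}$-curve over a totally real field is modular as a consequence of Serre's conjecture (Khare--Wintenberger) via the associated projective representation of $G_{\mathbb{Q}}$.

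The first task is to obtain explicit equations. For $X_0(35)$ this is classical, and one has the standard Atkin--Lehner involutions $w_5,w_7,w_{35}$ and the corresponding quotients. For the three curves with non-standard level structure I would feed the $q$-expansion algorithm of Section \ref{q-expansion}, together with the isogeny decompositions of Lemma \ref{Jacobian decomposition}, to write down bases of weight-$2$ forms and hence canonical or other projective models; from these one reads off the genus, the cusps, the (finitely many) CM points, and the images of the natural involutions. One also checks, as in Section \ref{Gonality}, that the higher-genus curves among these four are neither hyperelliptic nor bielliptic over $\mathbb{C}$.

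Then I would argue curve by curve. For a curve $X$ of genus $\leq 1$ among the four (I expect $X(3b,5s^+)$ to be such), there are infinitely many quadratic points, so instead of listing them I would use an Atkin--Lehner involution $w$ defined over $\mathbb{Q}$: if $P$ is a point over a quadratic field with Galois conjugate $P^{\sigma}$ and $P^{\sigma}=w(P)$, then the corresponding elliptic curve is isogenous to its Galois conjugate, hence a $\mathbb{Q}$-curve, hence modular. The quadratic points \emph{not} of this form lie either on a fixed-point locus of $w$ or over the rational points of a rank-$0$ quotient, and so are finite in number and can be enumerated and inspected directly. For the higher-genus curves $X_0(35)$, $X(3ns^+,7s^+)$, $X(5b,7ns^+)$, Harris--Silverman \cite{harris_bielliptic_1991} gives finiteness of the set of quadratic points but not effectivity; to make this effective I would push each curve down to a lower-genus curve (an Atkin--Lehner quotient, or $X_0(35)$ itself) for which the relevant piece of the Jacobian, in the decomposition supplied by Lemma \ref{Jacobian decomposition}, has Mordell--Weil rank $0$ over $\mathbb{Q}$, and then run a Mordell--Weil sieve / formal-immersion argument on $\mathrm{Sym}^2 X$ to pin down all effective divisors of degree $2$. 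Each point in the resulting list is finally checked to be a cusp, a CM point, or a $\mathbb{Q}$-curve (the last detected by $P^{\sigma}$ being an Atkin--Lehner conjugate of $P$); this is the content of Section \ref{Quadratic Points}.

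The step I expect to be the main obstacle is the effective determination of all quadratic points on $X(3ns^+,7s^+)$ (and, to a lesser extent, $X(5b,7ns^+)$): these curves carry no obvious hyperelliptic or bielliptic structure, and their Jacobians need not have rank $0$, so one must work with the explicit model, exploit the isogeny decomposition of the Jacobian into pieces of $J_0(N)$ with newform labels, and carry out rank and Selmer computations before the sieve can succeed. A secondary point requiring care is bookkeeping the finitely many sporadic quadratic points so as to confirm that none of them is a genuinely non-modular elliptic curve defined over a real quadratic field in which $5$ and $7$ are unramified; one must also remember that we need only treat points defined over \emph{totally real} quadratic fields, which discards any point lying over an imaginary quadratic field.
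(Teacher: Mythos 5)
Your high-level plan is correct and matches the paper's: reduce via Proposition \ref{Exceptional curves} to the four listed modular curves, find all degree-$\leq 2$ points, and dispose of each by Lemma \ref{Q-curve} (Atkin--Lehner detects $\mathbb{Q}$-curves) or by CM. However there are two concrete gaps.

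First, the genus expectations are off. You conjecture that $X(3b,5s^+)$ has genus $\leq 1$ and would therefore carry infinitely many quadratic points. In fact $X(3b,5s^+)\cong X_0(75)/w_{5^2}$ has genus $3$, with Jacobian isogenous to $E_1\times E_2\times E_3$, all of rank $0$ over $\mathbb{Q}$. All four curves have genus $\geq 3$ ($X(5b,7ns^+)$ has genus $6$). There is thus no ``easy'' case of the kind you sketch, and the uniform approach of projecting to rank-$0$ quotients of the Jacobian is what one must carry out for each curve. The paper does this by hand-tailored methods: for $X(3b,5s^+)$ it projects to two rank-$0$ elliptic quotients (one via $w_3$ over $\mathbb{Q}$, one via an order-$3$ automorphism over $\mathbb{Q}(\sqrt{5})$) and analyzes singular points of the resulting image in $\mathbb{P}^1\times\mathbb{P}^1$; for $X(5b,7b)$ it passes to the genus-$2$ curve $X/w_7$ with rank-$0$ Jacobian and enumerates rational degree-$2$ divisor classes; for $X(3ns^+,7s^+)$ it uses a map to a $\mathbb{Q}$-curve $E$ over $\mathbb{Q}(\zeta_3)$ with $E(K)$ trivial; for $X(5b,7ns^+)$ it builds a map to a rank-$0$ quotient through an auxiliary curve $Y$ over $\mathbb{Q}(\sqrt{-7})$ and finishes with Kummer-surface and mod-$p$ sieve arguments. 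Your proposal of a Mordell--Weil sieve on $\mathrm{Sym}^2 X$ is a reasonable alternative in principle, but you would need to verify rank $0$ of the relevant pieces of the Jacobian and produce explicit formal immersions; the paper avoids $\mathrm{Sym}^2$ entirely by exploiting explicit low-degree quotient maps.

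Second, and more substantively, the trichotomy ``cusp / CM / $\mathbb{Q}$-curve'' is not exhaustive, and your plan to check that every exceptional point falls into one of those three buckets cannot be carried out as stated. On $X(3b,5s^+)$ the computation produces two conjugate pairs of points over $\mathbb{Q}(\sqrt{41})$ that are none of these. For them the paper checks directly that the $j$-invariant is not the image of a $\mathbb{Q}(\sqrt{41})$-point on $X(7b)$, $X(7s^+)$ or $X(7ns^+)$, so the mod-$7$ image is large and modularity follows from the ordinary chain Theorem \ref{Modularity lifting} plus Section \ref{Residual modularity}. You also need to discard points defined over quadratic fields in which $5$ is ramified (which the hypothesis excludes) as a separate case, rather than subsuming them into the three categories. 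Without this fourth escape route the argument breaks at exactly those sporadic points.
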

\begin{proof}
  This follows from Proposition \ref{Exceptional curves} and the study of quadratic points on some modular curves in section \ref{Quadratic Points} below.
\end{proof}
\begin{remark}
\label{General quadratic fields}
  As the proof of proposition \ref{Exceptional curves} shows, to get modularity for a real quadratic field different from $\mathbb{Q}(\sqrt{5})$, we only need to study quadratic points on the four curves listed there and $X(3b,5b,7s^+)$, $X(3ns^+,5s^+,7b)$, $X(3ns^+,5s^+,7ns^+)$. We can further reduce to understanding quadratic points on the curves $X(3b,7s^+)$, $X(3ns^+,7b)$ and $X(5s^+,7ns^+)$. 
  \begin{itemize}
    \item
The curve $X(3b,7s^+)$ has genus 6, its Jacobian decomposes up to isogeny as
\begin{align*}
  \mathrm{Jac}(X(3b,7s^+))\sim E_1\times A_1\times A_2 \times E_2
\end{align*}
where the first three factors have conductor 147 while the last one has conductor 21, the factors $E_i$ are elliptic curves while the factors $A_i$ are abelian surfaces. All factors except for $A_1$ has rank 0 over $\mathbb{Q}$. An approach similar to the one used to handle the curve $X(3ns^+,7s^+)$ below allows one to explicitly write down maps from $X(3b,7s^+)$ to the elliptic curves $E_1$, $E_2$, and hence find its quadratic points by the same method.
    \item 
    The curve $X(3ns^+,7b)$ has genus 2, and its Jacobian has rank 0 and the hyperelliptic involution given by the Atkin-Lehner involution $w_7$. Using the same method for the curve $X(5b,7b)$ below, we can determine all the quadratic points on it that does not come from the hyperelliptic class, while the points coming from the hyperelliptic class only gives rise $j$-invariants of $\mathbb{Q}$-curves, and hence the corresponding elliptic curves are modular by lemma \ref{Q-curve}.
    \item  
    The curve $X(5s^+,7ns^+)$ has genus 19, and its Jacobian admits two abelian surface factors that has rank 0 over $\mathbb{Q}$. Thus it is in theory possible to determine all quadratic points on it. However due to practical (computational) complications in executing this, we leave this to a future work. 
  \end{itemize}
  In particular, using the methods in this paper, for modularity of elliptic curves over all real quadratic fields different from $\mathbb{Q}(\sqrt{5})$, the only curve we can not directly handle is the genus 19 curve $X(5s^+,7ns^+)$. However, proposition \ref{Automatic ordinarity prop} shows that an elliptic curve corresponding to a quadratic point defined over a field unramified at 5 on this curve is ordinary at all places above 5, and remark \cite{skinner_nearly_2001} shows such curves a modular. Thus the methods of this paper can actually show that all elliptic curves over a real quadratic field unramified at 5 are modular (that is, we do not need the field to be unramified at 7).
\end{remark}
\section{Quadratic points on modular curves}\label{Quadratic Points}
The goal of this section is to show that any elliptic curve that gives rise to a quadratic point on one of the modular curves in Proposition \ref{Exceptional curves} are modular. For each such modular curve $X$, at each prime $p$ such that $X$ has a Borel level structure at $p$, there is an Atkin-Lehner involution $w_p$ which is an involution of $X$ over $\mathbb{Q}$, which in the moduli interpretation of $X$ correspond to
\[ (E,\phi) \mapsto (E/C_p,\phi')\]
where $C_p$ is the line that defines the Borel subgroup in the level structure at $p$. The Atkin-Lehner involutions generate an elementary abelian $2-$subgroup of \\$\mathrm{Aut}(X/\mathbb{Q})$. We call any non-trivial element of this subgroup an Atkin-Lehner involution.
We have the following useful fact
\begin{lem}
\label{Q-curve}
  Let $E$ is an elliptic curve over a quadratic field $F$ that gives rise to a point $P\in X(F)$. Assume that there is an Atkin-Lehner involution $w$ such that $P$ maps to a rational point in $X/w$. Then $E$ is modular
  
\end{lem}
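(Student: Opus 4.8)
The plan is to analyse how the Galois conjugate of $P$ sits in the fibre of the quotient map $\pi\colon X\to X/w$, and to show that in one case $E$ descends up to twist to $\mathbb{Q}$ while in the other $E$ is a $\mathbb{Q}$-curve; modularity then follows from known results in both cases.

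First, write $\sigma$ for the nontrivial element of $\Gal(F/\mathbb{Q})$. Since $\pi$ is defined over $\mathbb{Q}$ and $\pi(P)\in(X/w)(\mathbb{Q})$, we have $\pi(P^\sigma)=\pi(P)^\sigma=\pi(P)$, so $P^\sigma\in\{P,w(P)\}$; note that $P$ and $P^\sigma$ are not cusps, as they classify elliptic curves. If $P^\sigma=P$ then $P\in X(\mathbb{Q})$, and composing with the $\mathbb{Q}$-rational forgetful morphism $X\to X(1)=\mathbb{P}^1_j$ gives $j(E)\in\mathbb{Q}$. Hence $E$ is a twist, by a finite-order character of $G_F$, of the base change to $F$ of some elliptic curve $E_0/\mathbb{Q}$. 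Now $E_0$ is modular by \cite{wiles_modular_1995,taylor_ring-theoretic_1995,breuil_modularity_2001}; twisting $\rho_{E_0,\ell}|_{G_F}$ by a finite-order character preserves automorphy; and base change along the quadratic extension $F/\mathbb{Q}$ preserves automorphy by \cite{langlands_base_1980}. So $E$ is modular in this case.

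The main case is $P^\sigma=w(P)$. By the description of Atkin--Lehner involutions recalled above, $w(P)$ classifies $(E/C,\dots)$ for a cyclic subgroup scheme $C\subset E$ — a product over the primes where $w$ acts of the cyclic subgroups cutting out the Borel level structures — and in particular $E/C$ is isogenous to $E$ over $\overline{\mathbb{Q}}$. Applying the $\mathbb{Q}$-rational morphism $X\to\mathbb{P}^1_j$ to the equality $P^\sigma=w(P)$ yields $j(E)^\sigma=j(E/C)$, so $E^\sigma$ and $E/C$ have the same $j$-invariant and are isomorphic over $\overline{\mathbb{Q}}$. Combining, $E$ is isogenous over $\overline{\mathbb{Q}}$ to its conjugate $E^\sigma$, i.e.\ $E$ is a $\mathbb{Q}$-curve.

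Finally I would invoke modularity of $\mathbb{Q}$-curves: by Ribet's theorem every $\mathbb{Q}$-curve is a $\overline{\mathbb{Q}}$-isogeny factor of an abelian variety of $\GL_2$-type over $\mathbb{Q}$, and such abelian varieties are modular now that Serre's conjecture is known (Khare--Wintenberger). Concretely, there is a classical newform $g\in S_2(\Gamma_1(N))$, possibly with nontrivial nebentypus, and a prime $\ell$ with $\rho_{E,\ell}\cong(\rho_{g,\ell}\otimes\chi)|_{G_F}$ for a finite-order character $\chi$; base changing the automorphic representation of $\GL_2(\mathbb{A}_\mathbb{Q})$ attached to $g$ to $\GL_2(\mathbb{A}_F)$ and twisting by the Hecke character matching $\chi$ exhibits $\rho_{E,\ell}$ as automorphic, so $E$ is modular. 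I expect the only delicate points to be the (harmless) coarse-moduli gap — a point of $X$ only determines $E$ up to $\overline{\mathbb{Q}}$-isomorphism, which is exactly the information the $\mathbb{Q}$-curve argument needs — and the bookkeeping of nebentypus and of the twisting characters when citing Ribet's theorem and cyclic base change; the conceptual heart is the observation that $P^\sigma=w(P)$ forces $E$ to be a $\mathbb{Q}$-curve.
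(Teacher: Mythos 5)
Your proof is correct and follows essentially the same route as the paper: deduce from $\pi(P)\in (X/w)(\mathbb{Q})$ that $P^\sigma\in\{P,w(P)\}$, hence $E$ is a $\mathbb{Q}$-curve, then invoke Ribet's theorem, Serre's conjecture (Khare--Wintenberger), and cyclic/solvable base change. Two small remarks: the paper reduces at the outset to the non-CM case (CM curves being modular by classical results), which is what justifies that the $\overline{\mathbb{Q}}$-isomorphism between $E^\sigma$ and $E$ or $E/C$ is a \emph{quadratic} twist, so the isogenies between conjugates descend to a quadratic extension $F(\chi)$ of $F$ — this is the explicit solvable field over which Ribet's construction realizes $E$ as a factor, a point your sketch acknowledges but does not pin down; and the paper handles both subcases $P^\sigma=P$ and $P^\sigma=w(P)$ uniformly through the $\mathbb{Q}$-curve machinery rather than splitting off the first as a base-change-from-$\mathbb{Q}$ case, though your variant is also fine.
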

\begin{proof}
We only need to consider the case that $E$ has no CM.

  Let $\sigma\in \mathrm{Gal}(F/\mathbb{Q})$ denote the non-trivial element. Then $E^\sigma$ gives rise to the point $P^\sigma\in X(F)$, and $P^\sigma=w(P)$ or $P^\sigma=P$. In either case, a quadratic twist of $E^\sigma$ must be $F$-isogenous to $E$. Thus there is a quadratic character $\chi$ of $G_F$ such that over $\mathbb{Q}_l$
  \[\rho_{E,l}\cong \rho_{E,l}^\sigma \otimes \chi\]
  Thus for any $\tau\in G_\mathbb{Q}$, $E^\tau$ is isogenous to $E$ over $\overline{Q}$, and the isogeny is defined over $F(\chi)$, the splitting field of $\chi$.
  Thus $E$ is a $\mathbb{Q}$-curve in the sense of \cite{ribet_abelian_2004}. By the proof of theorem 6.1 of \cite{ribet_abelian_2004}, $E$ occurs as an $F(\chi)$-factor of some abelian variety $A$ of $\GL_2$-type over $\mathbb{Q}$. Serre's conjecture (proven in \cite{khare_serres_2009-1} and \cite{khare_serres_2009}) implies that $A$ is a $\mathbb{Q}$-factor of some modular Jacobian $\mathrm{Jac}(X_0(N))$, this shows that $\rho_{E,l}|_{G_{F(\chi)}}$ is automorphic (in fact corresponding to a base change of a classical modular form). By solvable base change, $\rho_{E,l}$ is modular, and hence $E$ is modular.
  
\end{proof}
\subsection{The curve $X(3b,5s^+)$}
We have $X=X(3b,5s^+)\cong X_0(75)/w_{5^2}$, hence $\Jac(X(3b,5s^+))\sim E_1\times E_2 \times E_3 $ in the isogeny category. Here $E_1$ is isogenous to $X_0(15)$ while $E_2$ and $E_3$ are elliptic curves of conductor $75$, and each of them have rank $0$ over $\mathbb{Q}$.

The $q$-expansion of the three newforms corresponding to $E_i$ are
 \begin{align*}
 &f_1=q - q^2 - q^3 - q^4 + q^5 + q^6 + 3q^8 + q^9 - q^{10} - 4q^{11}+ O(q^{12}) \\
 &f_2=q + q^2 + q^3 - q^4 + q^6 - 3q^8 + q^9 - 4q^{11} + O(q^{12}) \\
 &f_3=q - 2q^2 + q^3 + 2q^4 - 2q^6 + 3q^7 + q^9 + 2q^{11} + O(q^{12})
 \end{align*}
Using the method desribed in section \ref{q-expansion} we find that the $q$-expansion of a basis for \\$H^0(X,\Omega^1)$ is given by $-5f_1(z)+f_1(z/5)$, $f_2(z/5)$ and $f_3(z/5)$. Using this basis, we see there are no degree 2 relations between them and there is a degree 4 relation, hence the canonical map realizes $X$ as the quartic
\[9X^4 + 30X^2Y^2 + 108X^2YZ - 48X^2Z^2 + 25Y^4 - 60Y^3Z - 80Y^2Z^2 +
    16Z^4\]
and thus $X$ is not hyperelliptic. Over $\mathbb{Q}$, the automorphism group of $X$ has order 2, generated by the Atkin-Lehner involution $w_3$, which in the above model correspond to $[X:Y:Z]\mapsto [-X:Y:Z]$. Using Magma we find that the quotient curve is the elliptic curve $E_1$ with equation
\begin{align*}
  y^2 + xy + y = x^3 + x^2 - 5x + 2
\end{align*}
and the quotient map $\phi_1:X\to E_1$ is given in terms of homogenous coordinates by
\begin{align*}
  [&-9/4X^2Y^2 - 15/2Y^4 + 9/20X^2YZ - 51/2Y^3Z + 9/50X^2Z^2 + 18Y^2Z^2
    - 6/25YZ^3 - 24/25Z^4:\\
&45/16X^2Y^2 + 135/16Y^4 + 9/10X^2YZ + 39Y^3Z - 81/100X^2Z^2 -
    39/10Y^2Z^2 - 363/25YZ^3 + 93/25Z^4:\\&-9/4X^2Y^2 - 15/2Y^4 + 9/5X^2YZ - 21Y^3Z - 9/25X^2Z^2 + 162/5Y^2Z^2
    - 348/25YZ^3 + 48/25Z^4]
\end{align*}
We have $E_1(\mathbb{Q})\simeq \mathbb{Z}/2\mathbb{Z}\oplus \mathbb{Z}/8\mathbb{Z}$.
Over $\mathbb{Q}(\sqrt{5})$, the automorphism group of $X$ is isomorphic to $S_3$, and for the automorphism of order $3$ given by 
\begin{align*}
  [X:Y:Z]\mapsto [-1/2X+\sqrt{5}/2Y:-3\sqrt{5}/10X-1/2Y:Z]
\end{align*}
the quotient curve is the elliptic curve $E_2$ with equation
\begin{align*}
  y^2 + y = x^3 + x^2 + 2x + 4
\end{align*}
and the quotient map $\phi_2:X\to E_2$ is given by
\begin{align*}
  [&-12/5X^3Y^4 - 36/5XY^6 - 357/250X^3Y^3Z - 1719/50XY^5Z +
    27/25X^3Y^2Z^2 - 687/125XY^4Z^2 \\ &+402/625X^3YZ^3 +
    2268/125XY^3Z^3 + 12/125X^3Z^4 + 864/625XY^2Z^4 - 984/625XYZ^5 \\&-
    144/625XZ^6:-3/4X^3Y^4 + 81/100X^2Y^5 - 9/4XY^6 + 63/20Y^7 - 12/25X^3Y^3Z \\&-
    81/50X^2Y^4Z - 54/5XY^5Z +117/50Y^6Z - 81/25X^2Y^3Z^2 -
    84/25XY^4Z^2 \\&- 891/25Y^5Z^2 + 48/625X^3YZ^3 - 162/125X^2Y^2Z^3 +
    144/125XY^3Z^3 - 702/25Y^4Z^3 \\& + 12/625X^3Z^4 + 396/625XY^2Z^4 +
    1188/125Y^3Z^4 + 48/625XYZ^5 + 216/25Y^2Z^5 - 144/125YZ^6 \\&-
    288/625Z^7:3/2X^3Y^4 + 9/2XY^6 + 24/25X^3Y^3Z + 108/5XY^5Z + 168/25XY^4Z^2 -
    96/625X^3YZ^3 \\&- 288/125XY^3Z^3 - 24/625X^3Z^4 - 792/625XY^2Z^4 -
    96/625XYZ^5]
\end{align*}
We have $E_2(\mathbb{Q})\simeq \mathbb{Z}/5\mathbb{Z}$ is cyclic of order 5, generated by the point $[-1:1:1]$.
If $P$, $P^\sigma$ is a pair of conjugate quadratic points, then $\phi_i(P)+\phi_i(P^\sigma)$ is a rational torsion point on $E_i$, thus we have $\phi_2(P)-a[-1:1:1]=-(\phi_2(P^\sigma)+a[-1:1:1])$ for some integer $a$ mod 5, while $2\phi_1(P)-b[0:1:1]=-(2\phi_1(P^\sigma)-b[0:1:1])$ for $b=0$ or 1. The two equality implies $\phi_i(P)$, $\phi_i(P^\sigma)$ have the same image under a suitable two-to-one map $E_i\to \mathbb{P}^1$, thus $P$, $P^\sigma$ have the same image under a map $C\to \mathbb{P}^1\times \mathbb{P}^1$, where the two coordinate map have degree 6 and 16. Depending on the value of $a$, $b$, this maps $X$ birationally onto its image or maps $X/w_3$ birationally onto its image. Thus, either $P$ and $P^\sigma$ map to the same point in $X/w_3$, or they map to the same singular point (which is necessarily defined over $\mathbb{Q}$) in the image of the map. Using Magma, under a birational isomorphism $\mathbb{P}^1\times \mathbb{P}^1\simeq \mathbb{P}^2$, we find the plane curve which is the image of $X$, and find its singular points over $\mathbb{Q}$. The resulting quadratic points that we get either satisfies $w_3P=P^\sigma$ (hence correspond to $\mathbb{Q}$-curves), are CM or is defined over a real quadratic field with 5 ramified, except for 2 conjugate pair of points defined over $\mathbb{Q}(\sqrt{41})$. For the last 2 conjugate pair of points, we check directly that the $j$-invariant is not in the image of a $\mathbb{Q}(\sqrt{41})$-point of $X(7b)$, $X(7s^+)$ or $X(7ns^+)$, so that the image of the mod 7 representation is large and hence the points are modular. Thus all points defined over quadratic fields where 5 is unramified gives rise to modular elliptic curves.  
\subsection{The curve $X(5b,7b)$}
The curve $X=X(5b,7b)$ has Jacobian $\Jac(X)\sim E \times A$ where $E$ is an elliptic curve and $A$ is an abelian surface of conductor $35$. Both $E$ and $A$ has rank $0$ over $\mathbb{Q}$. $A$ corresponds to a pair of conjugate newforms with coefficient field $\mathbb{Q}[x]/(x^2+x-4)$. The pair of Atkin Lehner involutions ($w_3$, $w_5$) has sign $(1,-1)$ and $(-1,1)$ on $E$ and $A$ respectively. It follows that $w_{35}=w_5 w_7$ is the hyperelliptic involution on $X$, as the quotient $X/w_{35}$ has genus 0. 
The $q$-expansion of a basis for $H^0(X,\Omega^1)$ is given by
\begin{align*}
&f_1=q + q^3 - 2q^4 - q^5 + q^7 - 2q^9 - 3q^{11} +O(q^{12})\\
&f_2=2q - q^2 - q^3 + 5q^4 + 2q^5 - 8q^6 - 2q^7 - 9q^8 + 3q^9 - q^{10} + q^{11}+O(q^{12})\\
&f_3=q^2 - q^3 - q^4 + q^8 + q^9 + q^{10} + q^{11} +O(q^{12})
\end{align*}
where the $f_1$ corresponds to $E$ and $f_2$, $f_3$ corresponds to $A$.
The canonical map is given by $X\to X/w_{35} \hookrightarrow \mathbb{P}^2$ as a double cover of the conic
\[-4X^2 + Y^2 + 2YZ + 17Z^2
\]
The quotient $X/w_7$ is a genus $2$ curve with Jacobian isogenous to $A$. Putting $x=f_3/f_2$, $y=4dx/(f_2dq/q)$, an equation for this curve is given by
\[y^2=-7599x^6 - 3682x^5 - 1217x^4 - 284x^3 - 17x^2 - 2x + 1\]
The group of rational points of $\Jac(X/w_7)$ has order $16$, and the rational degree $2$ divisors that are not the hyperelliptic class is given by in Mumford's notation
\begin{align*}
  &(x^2 + 7/50x + 3/50, 701/2500x - 121/2500),\, (x^2, -x + 1),\\
&(x^2 + 5/58x + 3/58, -3345/3364x - 905/3364),\, (x^2 + 4/19x + 1/19, -776/361x + 72/361)\\
&(x^2 + 1/8x + 1/8, -55/64x + 145/64),\, (x^2 + 2/15x + 1/15, 2/75x - 14/75)\\
&(x^2 + 1/3x, -3x - 1),\, (x^2 + 2/17x + 1/17, 0).
\end{align*}
or their images under the hyperelliptic involution.

Thus if $P\in X(F)$ is a quadratic point, then $P+P^\sigma$ must become one of the above $15$ divisors, or becomes the hyperelliptic class in $X/w_7$. But in the latter case, because the hyperelliptic involution on $X/w_7$ is induced by $w_5$, this means that there is an Atkin-Lehner involution on $X$ such that $wP=P^\sigma$, hence all such points must come from a modular elliptic curve by lemma \ref{Q-curve}. Since we are only interested in quadratic points defined over totally real fields, we only need to consider the cases where the image of $P+P^\sigma$ is $2(0,1)$, $2(0,-1)$, $(0,-1/3)+(-1/3,0)$ or $(0,1/3)+(-1/3,0)$. However since $X\to X/w_7$ is a double cover, the second case can not happen since the fiber of a rational point on $X/w_7$ is stable under the Galois action, hence if $P$ occurs in a fiber then $P^\sigma$ occurs in the same fiber. Thus the only case left is when $P$, $P^\sigma$ are in the fiber of $(0,1)$ or $(0,-1)$, but in that case $P=w_7 P^\sigma$ and hence the corresponding elliptic curve is modular, again by lemma \ref{Q-curve}.

\subsection{The curve $X(3ns^+,7s^+)$}
We compute an equation for $X=X(3ns^+,7s^+)$ by a method due to Noam Elkies (private communication), which is reproduced below. The modular curve $X_0(49)$ is isomorphic to the elliptic curve
\begin{align*}
  y^2+xy=x^3-x^2-2x-1
\end{align*}
The only rational points on $X_0(49)$ are the origin $O$ and the 2-torsion poin $T=[2:-1:1]$. Under a suitable identification, $O$ and $T$ are the two cusps and the Atkin-Lehner involution $w_{49}$ must be $P\mapsto T-P$, since it acts as $-1$ on the space of holomorphic differentials and swaps the cusps. The quotient of $X_0(49)$ by $w_7$ is the genus 0 curve with coordinate $h=(1+y)/(2-x)$. The $q$-expansion of $h$ can be computed from the modular parametrization of $X_0(49)$, and gives
\begin{align*}
  h=q^{-1}+2q+q^2+2q^3+3q^4+4q^5+5q^6+7q^7+8q^8+\cdots
\end{align*}
Writing $j(q^7)$ as a rational function of degree $28$ of $h$ by solving a linear system of equations in the coefficients we have
\begin{align*}
  j(q^7)=\frac{(h+2)  ((h+3)(h^2-h-5)(h^2-h+2)(h^4+3h^3+2h^2-3h+1))^3}
     { (h^3+2h^2-h-1)^7 }
\end{align*}
One the other hand the curve $X(3ns^+)$ is the cyclic triple cover of the $j$-line obtained by adjoining $j^{1/3}$. Hence the curve $X$ is a cyclic triple cover of the $h$-line, obtained by adjoining a cube root of $(h^3+2h^2-h-1)/(h+2)$. This gives the following quartic model for $X$
\begin{align*}
  (h+2)g^3=(h^3+2h^2-h-1).
\end{align*}
Using lemma \ref{Jacobian decomposition}, we compute up to isogeny over $\mathbb{Q}$ 
\begin{align*}
  \Jac(X)\sim E\times A
\end{align*}
where $E$ is an elliptic curve of conductor 441 (and has rank 1) while $A$ is an abelian surface of rank 0 and conductor 63. To determine the quadratic points on $X$, we wish to compute a model for $A$ and a map $X\to A$. The abelian surface $A$ is (isogenous to) the Weil restriction of a $\mathbb{Q}$-curve $E$ defined over $K=\mathbb{Q}(\zeta_3)$. There is a map $X_0(441)\to X$ via the identification $X_0(441)=X(3s,7s)$ and the map is obtained by containment of the corresponding congruence subgroups. Thus it suffices to write down a parametrization of $E$ by $X_0(441)$ which factors through $X$, which we now describe.

Let $f_1$, $f_2$ be an integral basis of a Hecke-stable two dimensional subspace of $H^0(X_0(63),\Omega^1)$ on which the Hecke operators act through the system of Hecke eigenvalues correspond to $A$. We normalize this choice by requiring the $q$-expansion $f_1=q+\cdots$ and $f_2=q^2+\cdots$. Let $\pi_1$, $\pi_2$ denote the two degeneracy maps $X_0(441)\to X_0(63)$ (where $\pi_1$ is the quotient map from the inclusion of congruence subgroups). Putting
\begin{align*}
  \Omega= (\pi_1^*-\pi_2^*)(f_1+(2-\zeta_3^{-1})f_2)
\end{align*}
we compute the integration map $\int_{i\infty} \Omega: X_0(441)\to \mathbb{C}$. Up to high precision, the image of the homology of $X_0(441)$ is a lattice $\Lambda$ with
\begin{align*}
  &g_2(\Lambda)=\frac{7\sqrt{-3}-41}{6144} \\
  &g_3(\Lambda)=\frac{42\sqrt{-3}-43}{884736}
\end{align*}
Suppose that $D\int_{i\infty} \Omega: X_0(441) \to \mathbb{C}/\Lambda= \{y^2=4x^3-g_2x-g_3\}$ factors through $X_0(441)\to X$, for some integer $D$. This is equivalent to a map $X\to \{y^2=4x^3-g_2D^4x-g_3D^6\}$ such that the pullback of $dz$ is $\Omega$. The coordinates $x$, $y$ of such a map must satisfy the differential system
\begin{align*}
  &y^2=4x^3-g_2D^4x-g_3D^6 \\
  &\quad\qquad dx/y=\Omega
\end{align*}
This system has a unique solution with $x=q^{-2}+\cdots$ in the ring of Laurent series $K((q))$, but only for suitable choice of $D$ will the formal solution lie in the function field of $X$. Note that there is an automorphism of $X$ of order $3$, defined over $K$ given by $g\to \zeta_3 g$. This automorphism fixes the cusps $\infty$, and its action on the formal neighborhood of $\infty$ is given by $q\to \zeta_3 q$ (we note that both $q$ and $g/h$ are uniformizers at $\infty$). Hence given a formal solution $(x,y)$ to the above differential system, we can recognize whether $(x,y)$ lives in the function field of $X$ by separating the $q$-expansion into 3 pieces according to the exponent of $q$ mod 3, and testing whether each piece is a rational function of $h$ times $g^i$.

Using this procedure, we found that for $D=12$, the formal solution is actually in the function field of $X$, and subsequent direct algebraic manipulation verifies that we indeed have a map of curves $\phi:X\to E$ defined over $K$ given by those functions. The group $E(K)$ is trivial, hence for any pair of conjugate quadratic points $P$, $P^\tau$ satisfies $\phi(P)=-\phi(P^\sigma)$, in particular they map to the same point after composing $\phi$ with the $x$-coordinate map $E\to \mathbb{P}^1$. Our computation shows that this composition map is of the form
\begin{align*}
  P_0(h)^2+P_1(h)g+P_2(h)g^2
\end{align*}
where $P_i(h)$ are rational functions in $h$ of degree 13, 29, 29. Note that the same argument applies to the maps $\phi\circ c$ and $\phi \circ c^2$, where $c$ is the automorphism $(g,h)\mapsto (\zeta_3 g,h)$, and also when we replace $\phi$ with the map $\phi^\sigma$, where $\sigma$ is the non-trivial automorphism of $K$. But this implies that the all three functions $P_0(h)^2$, $P_1(h)g$, $P_2(h)g^2$ and their $\sigma$-conjugates take the same values at $P$ and $P^\sigma$, because the system of linear equations
\begin{align*}
  &(P_0(h)^2-P_0(h^\tau)^2)+(P_1(h)g-P_1(h^\tau)g^\tau)+(P_2(h)g^2-P_2(h^\tau)(g^\tau)^2)=0 \\
  &(P_0(h)^2-P_0(h^\tau)^2)+(P_1(h)g-P_1(h^\tau)g^\tau)\zeta_3+(P_2(h)g^2-P_2(h^\tau)(g^\tau)^2)\zeta_3^2=0 \\
  &(P_0(h)^2-P_0(h^\tau)^2)+(P_1(h)g-P_1(h^\tau)g^\tau)\zeta_3^2+(P_2(h)g^2-P_2(h^\tau)(g^\tau)^2)\zeta_3=0 
\end{align*}
only has trivial solution.
This forces the $h$-coordinate of $P$ to be a zero of a suitable resultant, from which we easily get the list of possible $h$-coordinates of a $P$. We end up with the following list of quadratic points $[1:h:g]$
\begin{align*}
  &[0:1:0], [0:0:1], [1:-1:1],[1:\frac{-1+\sqrt{5}}{2}:\frac{1-\sqrt{5}}{2}], [1:\frac{-3-\sqrt{5}}{2}:\frac{1-\sqrt{5}}{2}],\\&[1:\frac{-1+\sqrt{13}}{2}:1], [1:\sqrt{5}:\frac{1+\sqrt{5}}{2}],[1:\frac{3+\sqrt{17}}{2}:\frac{5+\sqrt{17}}{4}]. 
\end{align*}
From the formula for $j$ in terms of $h$, we can check that all the above points gives rise to cusps or CM $j$-invariants, hence the corresponding elliptic curves are modular.

\subsection{The curve $X(5b,7ns^+)$}
Throughout this section we use the abbreviation $X=X(5b,7ns^+)$, $\alpha=\frac{-1+\sqrt{-7}}{2}$, $K=\mathbb{Q}(\sqrt{-7})$ and $\sigma$ the non-trivial automorphism of $K/\mathbb{Q}$. First we recall that the modular curve $X(5b)=X_0(5)$ is isomorphic to $\mathbb{P}^1$ over $\mathbb{Q}$, and an explicit rational coordinate $x$ such that 
\[ j=\frac{(x^2+10x+5)^3}{x}, \]
see \cite{elkies_elliptic_1998}. The Atkin-Lehner involution on $X_0(5)=X(5b)$ in terms of this coordinate is given by 
$ x\mapsto 125/x$.
The modular curve $X(7ns^+)$ parameterizing normalizer of non-split Cartan level structure at $7$ is also isomorphic to $\mathbb{P}^1$ over $\mathbb{Q}$, with a rational coordinate $\phi$ such that
\[j= 64\frac{(\phi(\phi^2+7)(\phi^2-7\phi+14)(5\phi^2-14\phi-7))^3} {(\phi^3-7\phi^2+7\phi+7)^7}.\]
The normalizer of non-split Cartain subgroup of $\mathrm{PSL}_2(\mathbb{F}_7)$ is not maximal, but is contained in a subgroup of order $24$ isomorphic to $S_4$. All such subgroups are conjugate under $\mathrm{PGL}_2(\mathbb{F}_7)$, but breaks up into two conjugacy class in $\mathrm{PSL}_2(\mathbb{F}_7)$. A choice of this conjugacy class gives a modular curve that parameterizes an ''$S_4$'' level structure at $7$ is defined over $\mathbb{Q}(\sqrt{-7})$, and has coordinate $\psi$
such that
\begin{align*}
  \psi&=\frac{(2+3\alpha)\phi^3-(18+15\alpha)\phi^2+(42+21\alpha)\phi+(14+7\alpha)}{\phi^3-7\phi^2+7\phi+7}\\ 
j&= (\psi-3(1+\alpha))(\psi-(2+\alpha))^3(\psi+3+2\alpha)^3
\end{align*}
(We refer the reader to \cite{elkies_klein_1999} for these facts).
Thus $X$ has is birational to the plane curve given by
\[\frac{(x^2+10x+5)^3}{x}=64\frac{(\phi(\phi^2+7)(\phi^2-7\phi+14)(5\phi^2-14\phi-7))^3} {(\phi^3-7\phi^2+7\phi+7)^7} \]
and if we let $Y$ denote the modular curve with Borel level structure at $5$ and ''$S_4$'' level structure at $7$, $Y$ has birational model
\begin{align*}
  \frac{(x^2+10x+5)^3}{x}=(\psi-3(1+\alpha))(\psi-(2+\alpha))^3(\psi+3+2\alpha)^3
  \end{align*}
We have a map $\pi: X \to Y$ given by 
\begin{align*}
    (x,\phi) \mapsto  (x,\frac{(2+3\alpha)\phi^3-(18+15\alpha)\phi^2+(42+21\alpha)\phi+(14+7\alpha)}{\phi^3-7\phi^2+7\phi+7})
    \end{align*}
    and its conjugate $\pi^\sigma:X\to Y^\sigma$.
    
Using lemma \ref{Jacobian decomposition}, we have up to isogeny over $\mathbb{Q}$ 
\[ Jac(X)\simeq A_1 \times A_2 \times A_3\]
where $A_i$ are the abelian surface factors of $J_0(245)^{new}$ on which $w_7$ acts trivially, and the action of $w_5$ is 1,-1,-1 respectively. Checking for inner twists of the newforms contributing to $X$, we see that the$A_i$ are absolutely simple, are non-isogenous over $\mathbb{Q}$, but $A_1$ is isogenous to $A_2$ over $K$. $A_3$ is not isogenous to $A_1$ even over $\mathbb{C}$. The factors $A_2$, $A_3$ have rank $0$ over $\mathbb{Q}$, and the order of the group $A_2(\mathbb{Q})$ divides $7$. The Hecke field of $A_1$, $A_2$ are $\mathbb{Q}(\sqrt{2})$ (These facts can be extracted from the tables in \cite{mfd}, the assertion about the rank follows from numerically computing the value at $s=1$ of the $L$-function)

Let us now consider the three open compact subgroups $G_1$, $G_2$, $H$ of $\GL_2(\widehat{\mathbb{Z}})$ given by the following local conditions 
\begin{itemize}
  \item The component at $p\!\!\not| 35$ is $GL_2(\mathbb{Z}_p)$
  \item The component at $5$ is the inverse image of the upper triangular matrices under the reduction map $\GL_2(\mathbb{Z}_5)\to \GL_2(\mathbb{F}_5)$
  \item The component at $7$ of $G_1$ is the subgroup of  $\GL_2(\mathbb{Z}_7)$ that reduces to the normalizer of a non-split Cartan subgroup of $\GL_2(\mathbb{F}_7)$
  \item 
  The component at $7$ of $G_2$ is the subgroup of  $\GL_2(\mathbb{Z}_7)$ that reduces to the normalizer of the subgroup of the non-split Cartan subgroup of $\GL_2(\mathbb{F}_7)$ whose determinant is a square in $\mathbb{F}_7$.
  \item 
  The component at $7$ of $H$ is the subgroup of  $\GL_2(\mathbb{Z}_7)$ that reduces to the subgroup of $\GL_2(\mathbb{F}_7)$ which under the projection map to $\mathrm{PGL}_2(\mathbb{F}_7)$ is the subgroup of order $24$ of $\mathrm{PSL}_2(\mathbb{F}_7)$ containing the normalizer of non-split Cartan subgroup defining $G_2$.
  \end{itemize}
We have the containments $G_2\subset G_1$, $G_2\subset H$, and $\det G_1= \widehat{\mathbb{Z}}^\times$ while $\det G_2= \det H$ is the subgroup of index $2$ of $\widehat{\mathbb{Z}}^\times$ consisting of elements whose component at $7$ reduces to a square. Thus the Shimura variety $\mathrm{Sh}_{G_1}=X$ is geometrically connected while $\mathrm{Sh}_{G_2}$, $\mathrm{Sh}_{H}$ have $2$ connected component over $\mathbb{Q}(\sqrt{-7})$. Since the element $\left(\begin{smallmatrix}  0 &-1 \\ 5 &0 \end{smallmatrix}\right)_5$ normalizes all three open compact subgroup and has determinant $5 \notin (\mathbb{F}_7^\times)^2$, it induces an involution $w$ over $\mathbb{Q}$ on all three Shimura varieties, and permutes the geometric connected components transitively. Putting $\Gamma_G= G_1\cap\mathrm{SL}_2(\mathbb{Q})=G_2 \cap \mathrm{SL}_2(\mathbb{Q})$ and $\Gamma_H= H\cap\mathrm{SL}_2(\mathbb{Q})$, we have a commuting diagram of complex curves with involution $w$:
\begin{equation}\label{Modular curve}
\xymatrix
{
	\mathrm{Sh}_{G_2}=\Gamma _G \setminus \mathbb{H} \amalg \Gamma _G\setminus \mathbb{H} \ar[d] \ar[r] &  \mathrm{Sh}_{G_1}=\Gamma _G \setminus \mathbb{H}\ar[d] \\
	\mathrm{Sh}_{H}=\Gamma _H \setminus \mathbb{H} \amalg \Gamma _H \setminus \mathbb{H}  \ar[r] & X(5b)
}
\end{equation}
where the vertical map is given by the quotient map on each component, while the horizontal map is the identity on the first component and $w$ on the second component.
The above diagram descends to $K$. The $\mathbb{Q}$-structure on $\mathrm{Sh}_{G_1}=X$ is determined by the subfield $\mathbb{Q}(x,\phi)$ inside its function field over $\mathbb{C}$. Since all arrows respect the $\mathbb{Q}$-structures, we see that there is an isomorphism of curves over $K$
\[d: Y\cong  Y^\sigma=Y\times_{K,\sigma}K \]
and a commutative diagram of curves over $K$
\begin{equation}
\xymatrix
{
	X \ar[r]^\pi \ar[d]^w & Y 
	\ar[d]^d \\
	X \ar[r]^{\pi^\sigma} & Y^\sigma 
}
\end{equation}
such that $d(x)=125/x$. Note that there is at most one $d$ with such property, and using Magma we compute that 
\[d(-\psi)= 
      \frac{P(x,\psi)}{(x^2+4x-1)(x^2+10x+5)^2}
\]
with
\\
$P(x,\psi)=4x\psi^6 + (-9x^2 -
        48x + 25)\psi^5 +
        (1/2(3\sqrt{-7} + 3)x^3 + (22\sqrt{-7} + 22)x^2 + 1/2(177\sqrt{-7} + 33)x)\psi^4 + (-x^4 - 24x^3 + 1/2(135\sqrt{-7} -
        453)x^2 + (336\sqrt{-7} - 536)x + 1/2(-375\sqrt{-7} + 375))\psi^3 + ((-3\sqrt{-7} - 3)x^4 + 1/2(-97\sqrt{-7} + 47)x^3 +
        (-252\sqrt{-7} + 894)x^2 + 1/2(-459\sqrt{-7} + 6621)x + (125\sqrt{-7} - 125))\psi^2 + ((-\sqrt{-7} - 1)x^5 + (-21\sqrt{-7}
        - 30)x^4 + (-99\sqrt{-7} - 363)x^3 + 1/2(727\sqrt{-7} - 1923)x^2 + (1512\sqrt{-7} +
        2208)x + 1/2(-1125\sqrt{-7} - 5625))\psi + ((-3\sqrt{-7} - 69)x^4 + (-180\sqrt{-7} - 1092)x^3 + (-2331\sqrt{-7} -
        4761)x^2 + (-6228\sqrt{-7} - 3444)x + (750\sqrt{-7} + 750)).$

\begin{lem}
Inside $H^0(X_K,\Omega^1)=H^0(X,\Omega^1)\otimes_\mathbb{Q}K$ we have
\[ \pi^*H^0(Y,\Omega^1)\cap w^*\pi^*H^0(Y,\Omega^1)=0\]
\end{lem}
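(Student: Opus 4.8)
The plan is to reformulate the statement as a transversality assertion and then settle it by an explicit $q$-expansion computation. First I would use the commutative diagram $d\circ\pi=\pi^\sigma\circ w$ displayed just above: pulling back differentials gives $\pi^*\circ d^*=w^*\circ(\pi^\sigma)^*$, and since $d\colon Y\to Y^\sigma$ is an isomorphism, $d^*$ identifies $H^0(Y^\sigma,\Omega^1)$ with $H^0(Y,\Omega^1)$, so $w^*\pi^*H^0(Y,\Omega^1)=(\pi^\sigma)^*H^0(Y^\sigma,\Omega^1)$. Write $U=\pi^*H^0(Y,\Omega^1)$ inside $H^0(X_K,\Omega^1)=H^0(X,\Omega^1)\otimes_{\mathbb{Q}}K$. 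Since $X$, $\Jac(X)$ and the Hecke action are all defined over $\mathbb{Q}$ while $(Y,\pi)$ and $(Y^\sigma,\pi^\sigma)$ are $\sigma$-conjugate, the subspace $(\pi^\sigma)^*H^0(Y^\sigma,\Omega^1)$ is exactly the Galois conjugate $U^\sigma$. So the lemma is precisely the statement $U\cap U^\sigma=0$, i.e.\ that $U$ and $U^\sigma$ together span a $4$-dimensional subspace of the $6$-dimensional $H^0(X_K,\Omega^1)$.

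Next I would locate $U$ inside the Hecke decomposition. Applying Lemma \ref{Jacobian decomposition} to the diagram \eqref{Modular curve} together with a check of Hecke eigenvalues should show that $\Jac(Y)$ is isogenous over $K$ to the abelian surface $A_1$ (equivalently to $A_2$, these being $K$-isogenous via the twist by the quadratic character of $K$); in particular $g(Y)=2$, so $\dim_K U=2$, and $U$ lies in the $A_1$-isotypic part $(V_1\oplus V_2)_K$ of $H^0(X_K,\Omega^1)$, where $V_i\subseteq H^0(X,\Omega^1)$ is the rank-$2$ $A_i$-piece. Since $w=w_5$ acts as $+1$ on $(V_1)_K$ and as $-1$ on $(V_2)_K$ (and as $-1$ on $(V_3)_K$), both $U$ and $U^\sigma=w^*U$ sit inside $(V_1\oplus V_2)_K$; and for the involution $w^*$ the space $U\cap w^*U$ is the largest $w^*$-stable subspace of $U$. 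Hence it suffices to show that $U$ contains no $w^*$-eigenvector, i.e.\ that $U\cap(V_1)_K=0$ and $U\cap(V_2)_K=0$.

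To verify this I would pass to $q$-expansions. Using the explicit plane model of $X$ in the coordinates $(x,\phi)$ given above, and the analogous model of $Y$ in coordinates $(x,\psi)$, together with the explicit maps $\pi\colon(x,\phi)\mapsto(x,\psi(\phi))$ and $\pi^\sigma$ — or, equivalently, using the formula for $d$ recorded above to compute $w^*\pi^*$ directly — I would write down bases $\pi^*\omega_1,\pi^*\omega_2$ of $U$ and $(\pi^\sigma)^*\omega_1^\sigma,(\pi^\sigma)^*\omega_2^\sigma$ of $U^\sigma$ as holomorphic differentials on $X_K$, expand all four at the cusp $\infty$ using the $q$-expansions of the relevant newforms of level dividing $245$ (as produced in Section \ref{q-expansion}), reduce modulo a fixed basis of $H^0(X,\Omega^1)$, and check with Magma that the resulting $4\times 6$ matrix over $K$ has rank $4$. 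This gives $\dim_K(U+U^\sigma)=4$, hence $U\cap U^\sigma=0$, which is the claim.

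I expect the main obstacle to be the bookkeeping in this last step rather than anything conceptual: producing an honest basis of $H^0(X,\Omega^1)$ and the pullback differentials correctly, keeping careful track of the $K$-structure on $Y$ versus $Y^\sigma$ and of the cusp widths when reading off $q$-expansions. Once the four differentials are written down, linear independence is a finite mechanical verification. A secondary point to be careful about is the identification $\Jac(Y)\sim_K A_1$ (and that $Y$ genuinely has genus $2$), which underlies the reduction in the second paragraph; if one prefers, the whole argument can be run as the single brute-force linear-independence computation of the third paragraph, bypassing the Hecke-theoretic interpretation altogether.
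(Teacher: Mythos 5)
Your reduction in the first two paragraphs is correct and matches the setup in the paper: using $d\circ\pi=\pi^\sigma\circ w$ to identify $w^*\pi^*H^0(Y,\Omega^1)$ with $(\pi^\sigma)^*H^0(Y^\sigma,\Omega^1)=V^\sigma$, so the claim becomes $V\cap V^\sigma=0$, and observing that $V\cap w^*V$ is the largest $w^*$-stable subspace of $V$ (so it suffices to rule out $w^*$-eigenvectors in $V$). The genuine divergence from the paper is in how you propose to finish. The paper notes that $V\cap w^*V$ is stable under the anemic Hecke algebra $\mathbb{T}$, so if nonzero it is all of $V=w^*V$ and is $\mathbb{T}\otimes K$-irreducible, i.e.\ equal to some $H^0(A_i,\Omega^1)\otimes K$. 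A nonzero vector there gives an automorphic vector in $\pi_i$ fixed under both the normalizer of the non-split Cartan at $7$ (because it lives on $X$) and the preimage of an $S_4\subset\mathrm{PSL}_2(\mathbb{F}_7)$ (because it is pulled back from $Y$). These two groups generate $\GL_2(\mathbb{Z}_7)$, forcing the vector to be spherical at $7$, which contradicts $\mathrm{cond}\,\pi_{i,7}=7^2$. This is a purely representation-theoretic argument with no further computation. You instead propose an explicit rank check on $q$-expansions (either a reduced check against $(V_1)_K$, $(V_2)_K$, or the brute-force $4\times 6$ matrix). Your reduction buys you a smaller computation; the paper's argument buys you no computation at all and isolates the structural reason the intersection vanishes.

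One thing to flag: the proposal is a plan for a computation rather than the computation itself, and you acknowledge as much. The intermediate claims that $g(Y)=2$, that $\Jac(Y)\sim_K A_1$, and that $U\subseteq(V_1\oplus V_2)_K$ are all true (the first two follow a posteriori from the paper's display $\pi^*H^0(Y,\Omega^1)\oplus w^*\pi^*H^0(Y,\Omega^1)=(H^0(A_1,\Omega^1)\oplus H^0(A_2,\Omega^1))\otimes K$), but in your route they would need to be verified separately, or, as you note, can be bypassed entirely by doing the brute-force $q$-expansion rank computation, which is then a complete and self-contained proof once carried out. As written, though, the crucial last step is only sketched, so this is a correct strategy with the final verification outstanding, not a finished argument.
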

\begin{proof}
  By looking at the pullbacks of differentials in the diagram (\ref{Modular curve}), we see that $V \cap w^* V$ is stable under the anemic Hecke algebra $\mathbb{T}$ (the algebra generated by Hecke operators at good primes), we see that $V\cap w^*V$ is $\mathbb{T}$-stable. Because the Hecke fields of $A_i$ are totally real quadratic fields, the $H^0(A_i,\Omega^1)\otimes K$ are exactly the irreducible $\mathbb{T}\otimes K$ submodules of $H^0(X_K,\Omega^1)$. Hence if $V\cap w^*V\neq 0$, it must be $2$-dimensional and hence $V=w^*V=V^\sigma$ must be $H^0(A_i,\Omega^1)\otimes K$ for some $i$. But a non-zero element of this intersection gives rise to a vector $v\in \pi_i$ which is unramified away from $5$ and $7$, fixed under the Iwahori open compact at $5$, the normalizer of the non-split Cartan open compact at $7$ n and also under an $S_4$ subgroup of $\mathrm{PSL}_2(\mathbb{F}_7)$. The last two condition however forces $v$ be invariant under $\GL_2(\mathbb{Z}_7)$, contradicting the fact that $\pi_{i,7}$ has conductor $7^2$ (since it appears in the new part of $X_0(245)$).
\end{proof}
The lemma implies
\[\pi^*H^0(Y,\Omega^1)\oplus w^*\pi^*H^0(Y,\Omega^1)= (H^0(A_1,\Omega^1)\oplus H^0(A_2,\Omega^1))\otimes K\]
\begin{lem}
  Let $D$ be a degree $1$ divisor on $X$. Then the map $\Pi_D:X\to J(Y)$ given by
  \[P \mapsto \pi^\sigma(P)-\pi^\sigma(wP) -(\pi^\sigma(D)-\pi^\sigma(wD)) \]
  factorizes through the composition of $AJ_D: X\to J(X)$ and a $\mathbb{Q}$-quotient of $J(X)$ isogenous to $A_2$, and in particular through a quotient that has $\mathbb{Q}$-rank $0$.
\end{lem}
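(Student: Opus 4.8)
The plan is to factor $\Pi_D$ through the Abel--Jacobi map followed by a homomorphism of abelian varieties, and then to pin that homomorphism down by computing its action on holomorphic differentials, using the previous lemma together with the Atkin--Lehner eigenvalues of the $A_i$.

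First I would note that $\Pi_D = \phi\circ AJ_D$, where $\phi\colon J(X)\to J(Y)$ is the composition of $\pi^\sigma_*\circ(1-w_*)\colon J(X)\to J(Y^\sigma)$ with the isomorphism $d_*^{-1}\colon J(Y^\sigma)\to J(Y)$; this is immediate from the displayed formula (and from $\pi^\sigma\circ w=d\circ\pi$). The map $\phi$ is a priori only defined over $K$. On holomorphic differentials, $\phi^{*}$ has image $(1-w^{*})\bigl(w^{*}V\bigr)=\{\,w^{*}v-v : v\in V\,\}$ inside $H^{0}(X_{K},\Omega^{1})$, where $V:=\pi^{*}H^{0}(Y,\Omega^{1})$ is a $2$-dimensional subspace and we have used $(\pi^{\sigma})^{*}H^{0}(Y^{\sigma},\Omega^{1})=w^{*}V$ (again a consequence of $\pi^\sigma\circ w=d\circ\pi$). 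Now by the identity established just before the statement,
\[
V\oplus w^{*}V=\bigl(H^{0}(A_{1},\Omega^{1})\oplus H^{0}(A_{2},\Omega^{1})\bigr)\otimes_{\mathbb{Q}}K,
\]
and since $w=w_{5}$ acts on $H^{0}(A_{1},\Omega^{1})$ by $+1$ and on $H^{0}(A_{2},\Omega^{1})$ by $-1$, writing $v=v_{1}+v_{2}\in V$ with $v_i$ in the $A_i$-summand gives $w^{*}v-v=-2v_{2}$. The projection $V\to H^{0}(A_{2},\Omega^{1})\otimes K$ is injective --- its kernel lies in $V\cap w^{*}V=0$ by the previous lemma --- hence bijective by a dimension count, so $\mathrm{Im}(\phi^{*})=H^{0}(A_{2},\Omega^{1})\otimes K$.

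Knowing this image yields the desired factorization. Decompose $J(X)$ up to $\mathbb{Q}$-isogeny as $A_{1}'\times A_{2}'\times A_{3}'$ with $A_{i}'\sim A_{i}$; then $H^{0}(X_{K},\Omega^{1})=\bigoplus_{i}H^{0}(A_{i},\Omega^{1})\otimes K$ compatibly, and the cotangent map of $\phi|_{A_{1}'+A_{3}'}$ is $\phi^{*}$ followed by the projection onto $\bigl(H^{0}(A_{1},\Omega^{1})\oplus H^{0}(A_{3},\Omega^{1})\bigr)\otimes K$, which kills $\mathrm{Im}(\phi^{*})$. A homomorphism of abelian varieties in characteristic $0$ with vanishing differential is zero, so $\phi(A_{1}'+A_{3}')=0$ and $\phi$ factors through $q\colon J(X)\twoheadrightarrow Q:=J(X)/(A_{1}'+A_{3}')$. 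The subvarieties $A_{1}',A_{3}'$ are images of the $\mathbb{Q}$-rational isogeny $A_{1}\times A_{2}\times A_{3}\to J(X)$, hence defined over $\mathbb{Q}$, so $Q$ is a $\mathbb{Q}$-quotient of $J(X)$, it is $\mathbb{Q}$-isogenous to $A_{2}$, and therefore has $\mathbb{Q}$-rank $0$. Since $\Pi_{D}=\phi\circ AJ_{D}$ and $\phi$ factors through $q$, this gives the asserted factorization $X\xrightarrow{AJ_{D}}J(X)\xrightarrow{q}Q\to J(Y)$.

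The substantive step is the differential computation of the second paragraph, where the vanishing $V\cap w^{*}V=0$ from the previous lemma is precisely what forces the image to be the \emph{whole} $A_{2}$-part rather than a proper subspace or a mixture of factors. The remaining subtlety is bookkeeping of fields of definition: $\phi$ and the induced map $Q\to J(Y)$ are only defined over $K$, but this is harmless because $Q$ is intrinsically a $\mathbb{Q}$-quotient of $J(X)$ --- the $A_{2}$-isotypic quotient, well defined over $\mathbb{Q}$ since $A_{1},A_{2},A_{3}$ are pairwise non-isogenous over $\mathbb{Q}$ --- and only the $\mathbb{Q}$-rationality of $Q$ (hence of its rank) and of the composite $X\to Q$ is used in the application.
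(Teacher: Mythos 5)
Your proof is correct, and it takes a genuinely different (though closely related) route from the paper's. Both of you begin by observing that $\Pi_D$ factors through the Abel--Jacobi map $AJ_D$ followed by a homomorphism of abelian varieties. Where you diverge is in how the kernel of that homomorphism is identified. The paper kills the $A_3'$-part via the same image-of-differentials observation you use, but kills the $A_1'$-part by a purely formal telescoping computation: it applies the map to $(w+1)(P-D)=(wP-D)+(P-D)-(wD-D)$ and checks by hand (using $w^2=1$) that the result vanishes, so $(w+1)J(X)$ is in the kernel. This step is eigenvalue-independent; the Atkin--Lehner signs $(+1,-1,-1)$ of $w_5$ on $A_1,A_2,A_3$ only enter when identifying the quotient $J(X)/((w+1)J(X)+IJ(X))$ as isogenous to $A_2$. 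You instead compute the image of $\phi^*$ on holomorphic differentials \emph{exactly}, using the eigenvalues from the outset to show $w^*v-v=-2v_2$ and using $V\cap w^*V=0$ to prove the projection $V\to H^0(A_2,\Omega^1)\otimes K$ is an isomorphism; this kills $A_1'$ and $A_3'$ in one stroke. Your version makes explicit the nice consequence that the previous lemma forces the projection to the $A_2$-summand to be bijective, which the paper leaves implicit, and you also spell out the field-of-definition bookkeeping (that $Q$ is intrinsically a $\mathbb{Q}$-quotient even though $\phi$ is only defined over $K$), which the paper states but does not discuss. The paper's $(w+1)$-telescoping is slightly more robust in that it does not need the $w$-eigenvalue on $A_1$ to kill that factor; your approach is more uniform but leans harder on the sign data.
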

\begin{proof}
Since $D$ maps to $0$ in $J(Y^\sigma)$, there is a factorization through the Abel-Jacobi map associated to $D$.
Let $I$ be the ideal of $\mathbb{T}$ which cuts out the Hecke field of $A_3$. The observation after the previous lemma shows that $I J(X)$ gets killed in $J(Y^\sigma)$.
On the other hand, the image of $(w+1)(P-D)=(wP-D)+(P-D)-(wD-D)$ in $J(Y^\sigma)$ is 
\begin{align*}
  &\quad \pi^\sigma(wP)-\pi^\sigma(w^2P) +  \pi^\sigma(P)-\pi^\sigma(wP) \\
  &- (\pi^\sigma(wD)-\pi^\sigma(w^2P)) -(\pi^\sigma(D)-\pi^\sigma(wD))=0 
\end{align*}
hence the map in consideration factors through $J(X)/((w+1)J(X)+IJ(X))$. This factor is defined over $\mathbb{Q}$ and is $\mathbb{Q}$-isogenous to $A_2$.
\end{proof}
A convenient choice for the base divisor $D$ is given below
\begin{prop}
  Let 
  \begin{align*}
    &X^3-7X^2+7X+7=(X-\phi_1)(X-\phi_2)(X-\phi_3)\\
    &X^2+22X+125=(X-x_1)(X-x_2)
  \end{align*}
  \begin{itemize}
    \item
    In terms of the coordinate $(x,\phi)$, the $6$ cusps of $X$ are given by $(0,\phi_i)$ and $(\infty,\phi_i)$
    \item 
     In terms of the coordinate $(x,\psi)$, the $2$ cusps of $Y$ are given by $(0,\infty)$ and $(\infty,\infty)$
\item 
Put $D=(0,\phi_1)+(0,\phi_2)+(0,\phi_3)-(x_1,3)-(x_2,3)$ is a rational divisor of degree $1$ on $X$, and
\[ \pi^\sigma(D)-\pi^\sigma(wD)=3((0,\infty)-(\infty,\infty))\]
is a torsion point of exact order $7$ in $J(Y^\sigma)$.
  \end{itemize}
\end{prop}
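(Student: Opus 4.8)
The plan is to handle the three assertions separately. The first two are essentially a matter of locating the poles of $j$ on the explicit models, together with the known cusp counts for the degree-$7$ level structures; the third splits into an elementary part (degree and rationality of $D$) and the key divisor-class identity, which I will read off from the very explicit description of $w$, after which only the final ``exact order $7$'' statement requires a genuine computation.

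\emph{The cusps.} The cusps of $X$ are the points lying over $j=\infty$. On the plane model of $X$ the left-hand side $(x^2+10x+5)^3/x$ has poles only at $x=0$ and $x=\infty$, while the right-hand side has poles only along $\phi^3-7\phi^2+7\phi+7=0$ (at $\phi=\infty$ the value is the finite number $64\cdot 5^3$). So a cusp has $x\in\{0,\infty\}$ and $\phi$ among the three (distinct) roots $\phi_i$ of $\phi^3-7\phi^2+7\phi+7$. Since $X$ is the normalization of $X_0(5)\times_{X(1)}X(7ns^+)$ and $\gcd(5,7)=1$, the fibre over each of the $3$ cusps $\phi_i$ of $X(7ns^+)$ is the pair of cusps $x=0,\infty$ of $X_0(5)$, so all six points $(0,\phi_i),(\infty,\phi_i)$ occur and there are no others. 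The same reasoning applies to $Y=X_0(5)\times_{X(1)}X(7S_4)$: the expression $(\psi-3(1+\alpha))(\psi-(2+\alpha))^3(\psi+3+2\alpha)^3$ for $j$ is a degree-$7$ polynomial in $\psi$, so its only pole is $\psi=\infty$ (equivalently, an $S_4\subset\mathrm{PSL}_2(\mathbb{F}_7)$ has no element of order $7$, so the unipotent acts as a single $7$-cycle on the $7$ cosets and $X(7S_4)$ has a single cusp $\psi=\infty$); its two preimages in $Y$ are $(0,\infty)$ and $(\infty,\infty)$.

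\emph{The divisor identity.} Here $\deg D=3-2=1$. For rationality: $\{(0,\phi_i)\}$ is cut out on $X$ by the $\mathbb{Q}$-conditions $x=0$ and $\phi^3-7\phi^2+7\phi+7=0$, and $\{(x_1,3),(x_2,3)\}$ by $\phi=3$ together with the rational quadratic factor $X^2+22X+125$ of the sextic $(x^2+10x+5)^3-1728x$ — one checks that $\phi=3$ gives $j=1728$, so this sextic cuts out the fibre of $X\to X(7ns^+)$ over $\phi=3$, and that $X^2+22X+125$ divides it. The point of this choice is that $w$ acts transparently on the support: $w$ fixes the coordinate $\phi$ (the adelic element defining $w$ is trivial at $7$, hence fixes the level structure there) and sends $x\mapsto 125/x$, so since the constant term of $X^2+22X+125$ is $125$ we have $125/x_1=x_2$, and $w$ merely interchanges $(x_1,3)\leftrightarrow(x_2,3)$ while interchanging $(0,\phi_i)\leftrightarrow(\infty,\phi_i)$. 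Consequently
\[
D-wD=\big((0,\phi_1)+(0,\phi_2)+(0,\phi_3)\big)-\big((\infty,\phi_1)+(\infty,\phi_2)+(\infty,\phi_3)\big),
\]
the $(x_i,3)$-terms cancelling. Since $\pi^\sigma$ fixes $x$ and carries cusps to cusps, it sends $(0,\phi_i)$ to the unique cusp of $Y^\sigma$ with $x=0$, namely $(0,\infty)$, and $(\infty,\phi_i)$ to $(\infty,\infty)$; applying $\pi^\sigma$ to the displayed divisor gives $\pi^\sigma(D)-\pi^\sigma(wD)=3\big((0,\infty)-(\infty,\infty)\big)$.

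\emph{Exact order.} Since $7$ is prime and $\gcd(3,7)=1$, it is enough to show that $(0,\infty)-(\infty,\infty)$ has order exactly $7$ in $J(Y^\sigma)$. It is non-zero because $Y^\sigma$ has genus $2$, and its order divides $7$ — either by exhibiting an explicit rational function on the model of $Y^\sigma$ with divisor $7\big((0,\infty)-(\infty,\infty)\big)$, or by using that $J(Y^\sigma)$ is isogenous to the abelian surface $A_2$, whose rational torsion has order dividing $7$. This last step — realizing the degree-$7$ relation, or pinning down the torsion of $J(Y^\sigma)$, with Magma — is the only real obstacle; everything else is formal once the explicit models are in hand.
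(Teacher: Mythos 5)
Your overall strategy matches the paper's, and most steps are sound, but there is one genuine error in the middle.

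The cusp identification is fine: the paper instead analyzes the singularities $(\infty,\phi_i)$ of the plane model directly (they are of type $x^5=y^7$ and resolve to a single point each), whereas you count via the fibre-product description over $X(1)$ using $\gcd(5,7)=1$; both are valid, and the paper itself mentions the cusp-count alternative. The verification that $\phi=3$ gives $j=1728$ and that $X^2+22X+125$ divides $(x^2+10x+5)^3-1728x$ is correct and necessary. The ``exact order $7$'' argument is also the paper's: the function $x$ on $Y^\sigma$ has divisor $7((0,\infty)-(\infty,\infty))$, and genus $2$ rules out triviality. (Your second suggested route via the torsion of $A_2(\mathbb{Q})$ is shakier: $Y^\sigma$ and its Jacobian live over $K$, not $\mathbb{Q}$, so the bound $\#A_2(\mathbb{Q})\mid 7$ does not directly control $J(Y^\sigma)(K)_{\mathrm{tors}}$.)

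The gap is the claim that ``$w$ fixes the coordinate $\phi$ (the adelic element defining $w$ is trivial at $7$, hence fixes the level structure there).'' This inference is false. In moduli terms $w$ sends $(E,C_5,\beta_7)$ to $(E/C_5,\,E[5]/C_5,\,\phi_{C_5,*}\beta_7)$; the level structure at $7$ is transported along the $5$-isogeny, but the underlying elliptic curve changes, so the image in $X(7ns^+)$ (and hence the value of $\phi$) changes in general. In adelic language: the element $\left(\begin{smallmatrix}0&-1\\5&0\end{smallmatrix}\right)_5$ does not normalize $\mathrm{GL}_2(\mathbb{Z}_5)$, so $w$ does not descend to an automorphism of $X(7ns^+)$ at all, let alone the identity. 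The paper's argument for $w(x_1,3)=(x_2,3)$ is different and unavoidable here: $w$ is defined over $\mathbb{Q}$ and carries the fibre over $x=x_1$ to the fibre over $x=x_2=125/x_1$, so $w(x_1,3)$ is a $\mathbb{Q}(\sqrt{-1})$-rational point of $X$ with $x$-coordinate $x_2$, and one must check (a finite computation on the plane model) that $(x_2,3)$ is the only such point. Without that check, the cancellation of the $(x_i,3)$ terms in $D-wD$ is unjustified. Note also that since $w$ does not fix $\phi$, the statement $w(0,\phi_i)=(\infty,\phi_i)$ individually is not something you get for free either; however, for the divisor identity all you need is that $w$ permutes the full set of $x=0$ cusps onto the full set of $x=\infty$ cusps, which does follow from $x\mapsto 125/x$, and your computation of $\pi^\sigma(D)-\pi^\sigma(wD)$ already only uses this.
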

\begin{proof}
  The first two statements are clear: Note that in terms of the singular plane model with coordinates $(x,\phi)$, the points $(\infty,\phi_i)$ are singular points which are of the type $x^5=y^7$, and the singularity is resolved after $3$ blowups, and at each step there is a unique point in the pre-image of the singularity. Hence each $(x,\phi_i)$ actually gives exactly $1$ point in the smooth curve $X$ (alternatively, one could check that $X$ has exactly $6$ cusps, and we have written down at least $6$ of them). A similar analysis gives the statement for $Y$ and $Y^\sigma$. Because the involution $w$ on $X$ descends to the Atkin-Lehner involution on $X(5b)$, we see that $w$ switches the fibers of $X\to X(5b)$ above $x=0$ and $x=\infty$. On the other hand, one checks that $(x_i,3)$ are the only $\mathbb{Q}(\sqrt{-1})$-rational points with $x=x_i$, and hence $w$ must switch them. From this the equality in the last item follows.
  Finally, we have $7((0,\infty)-(\infty,\infty))=\mathrm{div}(x)$ is principal, and $(0,\infty)-(\infty,\infty)$ is not principal since $Y^\sigma$ has genus $2$.
\end{proof}
Let us now take $D$ as in the proposition and consider the map $\Pi=\Pi_D$ defined above. If $P$, $Q=P^\tau$ are a conjugate pair of quadratic points on $X$, $AJ_D(P+Q)$ is a rational point on $J(X)$ and hence must map to a $7$-torsion point in $J(Y^\sigma)$ under $\Pi$, since it factors through a quotient isogenous to $A_2$. On the other hand $wD$ is also rational and maps to $6((0,\infty)-(\infty,\infty))$, which has exact order $7$ in $J(Y^\sigma)$, thus we have 
\begin{align*}
  \pi^\sigma(P)+\pi^\sigma(Q) -\pi^\sigma(wP)-\pi^\sigma(wQ) \sim a((0,\infty)-(\infty,\infty))
 \end{align*}
for some $a \in \mathbb{Z}$. Thus $P$ and $Q$ maps to the same point in the Kummer surface $K(Y^\sigma)=J(Y^\sigma)/\pm$ under the map $\Pi_b$
\[P \mapsto  \pi^\sigma(P)-\pi^\sigma(wP)-b((0,\infty)-(\infty,\infty))\]
for some suitable integer $b$. 

By finding an explicit basis $\Omega_1$, $\Omega_2$ for the space of homolorphic differentials on $Y^\sigma$ using Magma, we compute in terms of $x$, $\psi$ a double cover map
\begin{align*}
  u:Y^\sigma \to \mathbb{P}^1
\end{align*}
and a rational function $v$ realizing $Y^\sigma$ as a hyperelliptic curve of genus 2 of the form
\begin{align*}
  v^2= \textrm{sextic in } u
\end{align*}
\begin{lem}\label{Birational image}
  The map $\Pi_b : X \to J(Y^\sigma)$ is birational onto its image.
\end{lem}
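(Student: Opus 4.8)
Since $\Pi_b$ differs from the morphism $\Psi\colon P\mapsto \pi^\sigma(P)-\pi^\sigma(wP)$ only by the translation by the fixed point $b\bigl((0,\infty)-(\infty,\infty)\bigr)$ of $J(Y^\sigma)$, it suffices to show that $\Psi$ is birational onto its image, and since $X$ is a geometrically irreducible curve this amounts to showing $\Psi$ is generically injective. First observe $\Psi$ is non-constant: $w$ changes the coordinate $x$ of $X(5b)$ by $x\mapsto 125/x$, so $\pi^\sigma\circ w\neq\pi^\sigma$, and as $Y^\sigma$ has genus $2$ the relation $\pi^\sigma(P)\sim\pi^\sigma(wP)$ cannot hold for all $P$; hence $\{P:\Psi(P)=0\}$ is a proper, and therefore finite, closed subset of $X$. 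The plan is to analyse, for a general pair $P,P'\in X$, what $\Psi(P)=\Psi(P')$ forces.

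Rewrite $\Psi(P)=\Psi(P')$ as the linear equivalence
\[
\pi^\sigma(P)+\pi^\sigma(wP')\ \sim\ \pi^\sigma(wP)+\pi^\sigma(P')
\]
of effective divisors of degree $2$ on $Y^\sigma$. By Riemann--Roch on a genus $2$ curve, two linearly equivalent effective divisors of degree $2$ are either equal or both equal (as divisor classes) to the canonical class, i.e. both are fibres of the hyperelliptic map $u\colon Y^\sigma\to\mathbb P^1$. In the first case, either $\pi^\sigma(P)=\pi^\sigma(wP)$ and $\pi^\sigma(P')=\pi^\sigma(wP')$ --- excluded for general $P$ by the remark above --- or else $\pi^\sigma(P)=\pi^\sigma(P')$ and $\pi^\sigma(wP)=\pi^\sigma(wP')$, i.e. $\widehat\Phi(P')=\widehat\Phi(P)$, where $\widehat\Phi:=(\pi^\sigma,\pi^\sigma\circ w)\colon X\to Y^\sigma\times Y^\sigma$. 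In the second case, applying $u$ gives $u\pi^\sigma(P)=u\pi^\sigma(wP')$ and $u\pi^\sigma(P')=u\pi^\sigma(wP)$; since $w^2=\mathrm{id}$, this says exactly that $\Phi(P')=\Phi(wP)$, where $\Phi:=(u\circ\pi^\sigma,\,u\circ\pi^\sigma\circ w)\colon X\to\mathbb P^1\times\mathbb P^1$. Now $\Phi$ factors through $\widehat\Phi$, so once one knows $\Phi$ is birational onto its image the same follows for $\widehat\Phi$; granting that, for general $P$ the fibre $\widehat\Phi^{-1}(\widehat\Phi(P))$ is $\{P\}$, which handles the first case, and in the second case $\Phi(P')=\Phi(wP)$ forces $P'=wP$, whence $\Psi(P)=\Psi(P')=\Psi(wP)=-\Psi(P)$, so that $\Psi(P)\in J(Y^\sigma)[2]$ --- again excluded for general $P$. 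Thus the lemma is reduced to: $\Phi$ is birational onto its image.

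This last assertion I would verify by explicit computation, for which all the ingredients are available in closed form: the plane model of $X$ in the coordinates $(x,\phi)$, the morphism $\pi$ and the isomorphism $d\colon Y\xrightarrow{\sim}Y^\sigma$ computed above (so that $\pi^\sigma=d\circ\pi\circ w$ by the commutative square), the involution $w$ ($x\mapsto 125/x$ on the $X(5b)$-coordinate), and the hyperelliptic model $v^2=(\text{sextic in }u)$ of $Y^\sigma$ together with its double cover $u$. Since $\deg\pi=3$ and $\deg u=2$, the function $u\circ\pi^\sigma$ on $X$ has degree $6$, hence $\Phi(X)\subset\mathbb P^1\times\mathbb P^1$ has bidegree $(a,a)$ with $a\cdot\deg\Phi=6$; so it suffices to check in Magma that $\Phi(X)$ has bidegree $(6,6)$ --- equivalently, that $u\circ\pi^\sigma$ and $u\circ\pi^\sigma\circ w$ generate $\mathbb Q(X)$, or that some sufficiently general point of $\Phi(X)$ has a reduced one-point preimage in $X$. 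Alternatively, one can skip the reduction above and instead realise $\Pi_b$ directly as a morphism $X\to\mathbb P^3$ using Kummer coordinates on $J(Y^\sigma)$ and let Magma compute its degree onto the image.

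The conceptual content is the genus $2$ Riemann--Roch dichotomy, which is what forbids any positive-dimensional locus of ``collisions'' $\Psi(P)=\Psi(P')$; what remains is purely computational, namely carrying out this degree computation with the rather large explicit equations for $\pi^\sigma$ and $Y^\sigma$, and this is the step I expect to be the main practical obstacle.
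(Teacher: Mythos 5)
Your argument is correct and follows essentially the same route as the paper: both rest on the Riemann--Roch dichotomy for effective degree-$2$ divisors on the genus-$2$ curve $Y^\sigma$ (linearly equivalent implies equal as divisors unless both are the hyperelliptic class), and both reduce the lemma to the Magma verification that $\Phi=(u\circ\pi^\sigma,\,u\circ\pi^\sigma\circ w)$ is birational onto its image, i.e.\ that these two functions generate the function field of $X$. The only cosmetic differences are that you argue forward while the paper argues by contradiction, and in the hyperelliptic-class case you conclude via $\Psi(P)\in J(Y^\sigma)[2]$ for general $P$ whereas the paper observes directly that infinitely many distinct pairs $(P_i,wQ_i)$ would then collide under $\Phi$.
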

\begin{proof}
  Suppose the contrary, so we have infinitely many pairs $(P_i,Q_i)$ of distinct points which have the same image via $\Pi_b$. We have 
  \begin{align*}
    \pi^\sigma(P_i)+\pi^\sigma(wQ_i)\sim \pi^\sigma(wP_i)+\pi^\sigma(Q_i)
  \end{align*}
  Suppose first that for infinitely many $i$, this effective degree 2 divisor is not the hyper elliptic class in $Y^\sigma$. This forces the above linear equivalence to be an equality of divisors. Note that $\pi^\sigma$ has degree 3, so $\pi^\sigma\circ w\neq \pi^\sigma$. Hence for infinitely many $i$ we must have $ \pi^\sigma(P_i)= \pi^\sigma(Q_i)$, $ \pi^\sigma(wP_i)= \pi^\sigma(wQ_i)$.
  
  If the above case does not happen, $ \pi^\sigma(P_i)+\pi^\sigma(wQ_i)$ and $\pi^\sigma(wP_i)+\pi^\sigma(Q_i)$ must be the hyperelliptic class for infinitely many $i$, as the hyperelliptic class is the unique $g^1_2$ on a genus 2 curve. This forces $P_i\neq wQ_i=:Q_i'$ for all but finitely many $i$, and we have
  \begin{align*}
    u(\pi^\sigma(P_i))&=u(\pi^\sigma(Q_i'))\\
    u(\pi^\sigma(wP_i))&=u(\pi^\sigma(wQ_i'))
  \end{align*}
  Thus in either case we see that the map 
  \begin{align*}
    (u\circ\pi^\sigma,u\circ \pi^\sigma \circ w):X \to \mathbb{P}^1\times \mathbb{P}^1
\end{align*}
  is not generically injective. However a Magma computation shows that the pair $(u\circ\pi^\sigma,u\circ \pi^\sigma \circ w)$ generates the function field of $X$, a contradiction.
\end{proof}
Consequently, composing the above map with the quotient map to the Kummer surface, we get a map $X \to \mathcal{K}(Y^\sigma)$ which is either birational onto its image or factors through $X/w$, which then is birational onto its image (since $X\to  X/w$ is the only degree $2$  map from $X$ to any curve). The second case happens if and only if pairs of the form $(P,wP)$ have the same image, and this happens if and only if $b=0$.

We are therefore reduced to finding conjugate pairs of quadratic points $(P,Q=P^\tau)$ on $X$ which maps to the same point in the Kummer surface via one of the above maps (note that we only need to consider $b\in \{0,1,2,3\}$, by replacing $P$, $Q$ with $wP$, $wQ$ if needed).

Let us first study the case $b=0$. The same argument in the proof of lemma 
\ref{Birational image} implies that either $P$,$P^\tau$ or $P$, $wP^\tau$ have the same image under the map
\begin{align*}
    (u\circ\pi^\sigma,u\circ \pi^\sigma \circ w):X \to \mathbb{P}^2
\end{align*}
which is birational onto its image. One checks that $u\circ\pi^\sigma$ realizes $K(X)$ as a degree 6 extension of $K(u\circ \pi^\sigma \circ w)$ and vice versa, and hence the image of $X$ is an irreducible plane curve of degree 12. Using Magma, we computed this plane curve explicitly and determined all its singular points defined over a quadartic extension of $K$. Hence either $P\neq P^\tau$ and $P\neq wP^\tau$, their common image must be one of the above singular points of the image of $X$ in $\mathbb{P}^2$; or $P=P^\tau$ or $P=wP^\tau$. One checks that the elliptic curves corresponding to such $P$ must be either a $\mathbb{Q}$-curve or have CM.

Finally, we now turn to the case $b\neq 0$. We are looking for pairs $(P,Q)$ such that 
\begin{align*}
  \pi^\sigma(P)+\pi^\sigma(Q) -\pi^\sigma(wP)-\pi^\sigma(wQ) \sim 2b((0,\infty)-(\infty,\infty))
 \end{align*}
 and we know a priori that there are only finitely many such pairs $(P,Q=P^\tau)$ (note that this was not true when $b=0$). By enumerating such pairs $(P,Q)$ over some primes $p$ split in $K$ where the whole situation has good reduction, we found for some primes $p$ there were no pairs $(P,Q)\in X(\mathbb{F}_p)^2$ or conjugate pairs $(P,P^\tau)\in X(\mathbb{F}_{p^2})$ satisfying the above equation, and hence there are no conjugate pairs of quadratic points on $X$ of this type. For $b=2$, a similar enumeration for the prime $p=71$ shows that the pairs $(P,Q)$ mod $p$ satisfying the linear equivalence relation for $b=2$ are either the cusps or the mod $p$ reduction of the pair of conjugate points corresponding to the CM point $P=(125\sqrt{5} + 250, -\frac{1}{2}(\sqrt{5} - 1))$ in the coordinates $(x,\phi)$. Furthermore, one checks that these are only possible pairs in $\mathbb{Q}_{71^2}$ lifting the pairs mod $71$. This shows that in this case all pairs of conjugate quadratic points we look for gives rise to CM elliptic curves.
 
 Putting everything together, we found that all quadratic points on $X$ gives rise to modular $j$-invariants.
\bibliographystyle{amsalpha}
\bibliography{Test}
E-mail address: \texttt{lhvietbao@googlemail.com}\\
Dept. of Mathematics, Harvard University, Cambridge, MA, USA

\end{document}